\newtheorem{theorem}{Theorem}[section]
\newtheorem{lemma}[theorem]{Lemma}
\theoremstyle{definition}
\newtheorem{definition}[theorem]{Definition}
\theoremstyle{remark}
\newtheorem{remark}[theorem]{Remark}
\numberwithin{equation}{section}
\begin{document}
 \title[Carleman estimates for higher step Grushin operators]{Carleman estimates for higher step Grushin operators}



\author[H. De Bie]{Hendrik De Bie}
\address{Clifford research group \\Department of Electronics and Information Systems \\Faculty of Engineering and Architecture\\Ghent University\\Krijgslaan 281, 9000 Ghent\\ Belgium.}
\curraddr{}
\email{Hendrik.DeBie@UGent.be}
\thanks{}

\author[P. Lian]{Pan Lian}
\address{1. School of Mathematical Sciences\\ Tianjin Normal University\\
Binshui West Road 393, Tianjin 300387\\ P.R. China\\
2. Clifford research group \\Department of Electronics and Information Systems \\Faculty of Engineering and Architecture\\Ghent University\\Krijgslaan 281, 9000 Ghent\\ Belgium.}
\curraddr{}
\email{panlian@tjnu.edu.cn}
\thanks{}

\keywords{Carleman estimate, Grushin operator, Jacobi polynomial, spherical harmonics,  unique continuation, Fischer decomposition}
\subjclass[2020]{Primary 35H20, 35B60; Secondary 33C45}

\date{}

\dedicatory{}


\begin{abstract} The higher step Grushin operators $\Delta_{\alpha}$ are a family of sub-elliptic operators which degenerate on a sub-manifold of $\mathbb{R}^{n+m}$. This paper establishes Carleman-type inequalities for these operators. 
It is achieved by deriving  a weighted $L^{p}-L^{q}$ estimate for the Grushin-harmonic projector. The crucial ingredient in the proof  is  the addition formula for Gegenbauer polynomials due to T. Koornwinder and Y. Xu. As a consequence, we obtain the strong unique continuation property for the Schr\"odinger operators $-\Delta_{\alpha}+V$ at points of  the degeneracy manifold, where $V$ belongs to certain $ L^{r}_{{\rm loc}}(\mathbb{R}^{n+m})$.
 \end{abstract}

\maketitle
\tableofcontents
\section{Introduction}

The study of unique continuation properties for partial differential operators traces back to Carleman's seminal work \cite{car} in 1939. Carleman proved that the time independent Schr\"odinger operator $\mathcal{H}=-\Delta+V$ in $\mathbb{R}^{2}$ with potential $V\in L^{\infty}_{{\rm loc}}(\mathbb{R}^{2})$, has the strong unique continuation property. Namely, $u\equiv 0$ is the unique solution to $\mathcal{H}u=0$ that has infinite order of vanishing at a point in $\mathbb{R}^{2}$. Since then, many unique continuation results  have been established for elliptic operators. This development  culminated with Jerison and Kenig's celebrated result \cite{JK}, which establishes the strong unique continuation property for $\mathcal{H}$ in $\mathbb{R}^{n}, n\ge 3$, where $V\in L_{{\rm loc}}^{n/2}(\mathbb{R}^{n})$. 
The consideration of  unbounded potentials $V$ is prompted by situations of physical interests. Independent of the Carleman's estimates,  an alternative geometric approach for unique continuation was developed  by Garofalo and Lin around 1986 in their profound works \cite{gl1, gl2}. Their method is based on local doubling properties proved by the Almgren monotonicity formula. There exists extensive literature on  unique continuation. For a comprehensive overview, we refer e.g. to the survey  \cite{tat}  and the references therein.


The unique continuation property for sub-elliptic operators is, in general, not true. A detailed understanding of the limitations or exceptions to this property is crucial for the broader comprehension of the behavior of sub-elliptic operators. Bahouri in \cite{Bah}  showed that unique continuation does not hold even for smooth and compactly supported perturbations of the  sub-Laplacian on the Heisenberg group $\mathbb{H}^{n}$. The closely related sub-elliptic operator (see \cite{Ga} for their connection), known as the Grushin operator $\Delta_{\alpha}$, is defined by 
\begin{equation} \label{ffg1}
    \Delta_{\alpha}:=\Delta_{x}+|x|^{2\alpha}\Delta_{y}, \qquad (x,y)\in \mathbb{R}^{n}\times \mathbb{R}^{m}, \,\alpha>0,
\end{equation}
where $\Delta_{x}$ is the ordinary Laplace operator on $\mathbb{R}^{n}$.
This operator was first introduced by Baoendi in \cite{baou} and later studied by Grushin in \cite{Gru1,Gru2},  where  hypoellipticity was obtained when $\alpha \in \mathbb{N}$. The subclass with $\alpha\in \mathbb{N} \backslash \{1\}$ is now commonly called the higher step Grushin operator. Geometrically, the higher step $\Delta_{\alpha}$ arises via a submersion from a sub-Laplace operator on a nilpotent  Lie group of step $\alpha+1$, see e.g. \cite{BFI}. Over the last years,
many classical problems in harmonic analysis, such as restriction estimates and spectral multipliers theorems,   have been 
successfully addressed   in the Grushin framework, see e.g. \cite{ccm1, ccm2, jst}. Moreover, the Grushin operator also has 
fruitful connections with other areas of study, such as the obstacle and extension problems for the fractional Laplacian, see e.g. the influential works \cite{caf,cars}. 

In \cite{Ga}, Garofalo initiated the study of strong unique continuation for zero-order perturbations of the  Grushin operator, i.e. $-\Delta_{\alpha}+V$, where $\alpha>0$ and the potential $V$ is subjected to the condition
\begin{equation} \label{pot1}
    |V(x, y)|\le C\psi(x,y).
\end{equation}
Here $\psi$ is the so-called angle function, which degenerates on $\{0\}\times \mathbb{R}^{m}$, see the subsequent formula \eqref{ps1}  for its definition. 
Note that the condition \eqref{pot1} forces the potential $V$ to vanish at $x=0$. Therefore, the permissible potentials are merely a subset of  $L^{\infty}_{\rm loc}$.  The results in \cite{Ga} were subsequently extended  to variable
coefficients Grushin operators in \cite{Gv}.  

Inspired by Jerison's work \cite{JD}, Garofalo and Shen in 1994  established  a delicate $L^{p}-L^{q}$ Carleman estimate and thus obtained the strong continuation property in \cite{GS} for the significant specific case where $n\ge 2$, $m=1$ and $\alpha=1$,  i.e.
\begin{equation*}
    \Delta_{x}u+|x|^{2}\partial_{t}^{2}u=Vu, \qquad (x, t) \in \mathbb{R}^{n}\times \mathbb{R},
\end{equation*}
 when $V\in L_{{\rm loc}}^{r}\left(\mathbb{R}^{n+1}\right)$, with 
 \begin{eqnarray}\label{jdq1}
\left\{
  \begin{array}{ll}
    r>n, & \hbox{if $n$ is even;} \\
   r>2n^{2}/(n+1), & \hbox{if $n$ is odd.}
  \end{array}
\right.
\end{eqnarray}
Notably, the condition \eqref{pot1} in terms of the degenerate weight $\psi$  is removed. The unresolved cases with $\alpha \neq 1$ and $m\neq 1$ are more challenging.  Considerable efforts have been devoted to this problem over the last years, see for instance the work of Garofalo and his collaborators \cite{BGM, BM}. Moreover, as was pointed out in \cite{BM},  the main difficulty is to establish a suitable estimate for the Grushin-harmonic projector, which will be addressed in  subsequent Section \ref{421}.  On the other hand, it is  known by many researchers that even for the specific  cases with $\alpha=1$ and $m>1$, i.e. the unique continuation property for equations 
 \begin{equation*}
     \Delta_{x}u+|x|^{2}\Delta_{y}u=Vu, \qquad (x, y)\in \mathbb{R}^{n}\times\mathbb{R}^{m},
 \end{equation*}
takes on significance. For instance, it will lead to interesting applications to unique continuation properties  for the sub-Laplacians on $H$-type groups, see e.g.\,\cite[Section 6]{GR} and \cite{GTr}. 
 
 
  In our work, following Garofalo and Shen \cite{GS}, we establish  Carleman estimates  for the zero-order perturbations of higher step Grushin operators, which lead to the strong unique continuation property for the Schr\"odinger operators $-\Delta_{\alpha}+V$, where $V$ is in  certain $L^{r}_{{\rm loc}}(\mathbb{R}^{n+m})$. Thus the restriction in terms of the degenerate weight $\psi$ in \eqref{pot1} is successfully removed. Interestingly,  although the degeneracy of $\Delta_{\alpha}$ becomes increasingly stronger as $\alpha\rightarrow \infty$,  the strong unique continuation property still holds for $-\Delta_{\alpha}+V$ at points of $\{0\}\times \mathbb{R}^{m}$ for all $V\in L^{\infty}_{{\rm loc}}$. It is worth noting that we were also able to remove the  distinction between even and odd dimensions $n$ in \eqref{jdq1} from \cite{GS}. Consequently, the results in \cite{GS} for odd dimensions are improved. We collect the exponents $p$ and $q$ in Carleman estimates and the potential spaces valid for strong  unique continuation  in Table \ref{tab:example2}.\\

  Now we discuss the main novelty in our derivation of  the Carleman estimates. 
  
  {\rm (\uppercase\expandafter{\romannumeral1}) {\bf Orthogonal basis for Grushin-harmonic polynomials}  
  
  The first key step is  the explicit construction of an orthogonal basis for Grushin-harmonic polynomials in terms of Jacobi polynomials. The structure of Grushin spherical harmonics  became  clear  when we recognized the underlying $\mathfrak{sl}_{2}$ structure through the connection between the Grushin operator and the so-called radially deformed Laplace operator, which arises from the study of the minimal representations of ${\rm O}(n,2)$ in \cite{bko}, see Section \ref{s21}. The significant  $\mathfrak{sl}_{2}$ structure   allows us to generalize classical results in complex and harmonic analysis, such as Fischer decomposition,   to the Grushin setting. Additionally, it is found that the Grushin-harmonics are also  related to the theory of the  Dunkl operator of rank 1 in \cite{dY}. When $\alpha=0$, we recover the  addition formula for Gegenbauer polynomials, which was  originally established by  to Koornwinder in \cite{koo} and later generalized by Xu in \cite{Xu}.

  {\rm (\uppercase\expandafter{\romannumeral2})} {\bf Estimates for Grushin-harmonic projection operator}
  
  The $L^{1}-L^{\infty}$ estimates for the Grushin-harmonic projection operator are
  derived using the aforementioned addition formula for Gegenbauer polynomials. The weighted $L^{2}-L^{2}$ estimates are  reduced to  uniform estimates for the weighted $L^{2}$ norms of Jacobi polynomials. The latter is achieved via two different  approaches: by the Bernstein-type inequality  recently obtained in \cite{HS} and by the connection formula for Jacobi polynomials  respectively. Our proof is concise, elementary, and distinctly different from the method of Garofalo and Shen in \cite[Lemma 4.3]{GS}.  This part is the main contribution of this paper. Moreover, we provide a tighter weighted $L^{2}-L^{2}$ estimate for the higher step Grushin-harmonic projector  on $\mathbb{R}^{n+1}$.
  
  We would  like to mention that the weighted $L^{2}$-norms of Jacobi and Gegenbauer polynomials  frequently appear in both physical and mathematical problems, for instance in the explicit computation of angular momentum, the generalization of Stolarsky's invariance principle to projective spaces, and  determinantal point processes,  see e.g.\,\cite{adgp, bg}. In the present framework,  the uniform norm estimates needed shall be viewed as a refined version of  Askey's famous transplantation theorem in the $L^{2}$ setting \cite{as}.

   Finally, combining the estimates obtained for projection operators in Section 4 with the Sobolev inequality for the Grushin operator of \cite{mor}, we seamlessly derive the Carleman estimates and the strong unique continuation property. The proof follows without significant difficulties from the approach  developed in Jerison's work \cite{JD}, which was later refined by Garofalo and Shen in the sub-elliptic setting \cite{GS}.

\begin{table}
        \centering
         \caption{Exponents in Carleman estimates and strong unique continuation}
         \label{tab:example2}
\begin{tabular}{cclccc}
   \toprule
      $m$ & $n$ &$\alpha$ & $p$  & $q$ &  $V\in L^{r}_{\rm loc}, r>$ \\
   \midrule
   \multirow{3}{*}{$m=1$} &   $n\ge 2$ &    1  & $\frac{2n}{n+1}$ &$\frac{2n}{n-1}$ & $n$\\  
    & $n\in\{2,3\}$  & $\mathbb{N}_{\ge 2}$  &  $\frac{6\alpha+10}{3\alpha+7}$ & $\frac{6\alpha+10}{3\alpha+3}$ & $(3\alpha+5)/2$\\
  & $n\ge 4$&  $\mathbb{N}_{\ge 2}$ &  $\frac{4+2(\alpha+1)(n-1)}{4+(\alpha+1)(n-1)}$  &$\frac{4+2(\alpha+1)(n-1)}{(\alpha+1)(n-1)}$&   $(2+(n-1)(\alpha+1))/2$ \\
   \hline
    \multirow{3}{*}{$m\ge 2$} 
    &$n=3$ & $\mathbb{N}_{\ge 2}$&$\frac{2\left(m+2+\frac{1}{\alpha+1}\right)}{m+2+\frac{2}{\alpha+1}}$ &$\frac{2\left(m+2+\frac{1}{\alpha+1}\right)}{m+2}$& $(\alpha+1)(m+2)+1$ \\
    \cline{2-6}
    &   $n\in\{2,3\}$ & $1$ & \multirow{2}{*}{$\frac{2\left(n+m-2+\frac{1}{\alpha+1}\right)}{n+m-2+\frac{2}{\alpha+1}}$}  & \multirow{2}{*}{$\frac{2\left(n+m-2+\frac{1}{\alpha+1}\right)}{n+m-2}$}& \multirow{2}{*}{$(\alpha+1)(n+m-2)+1$}
    \\
    &$n\ge 4$&   $\mathbb{N}$ &  & &\\
   \bottomrule
\end{tabular}
\end{table}

The rest of this paper is organized as follows: In Section \ref{sec2}, we introduce basic notions  of the Grushin operator and Jacobi polynomials which will be used in this paper. Section \ref{31} is focused on Grushin spherical harmonics, including the orthogonal basis and the addition formula. Section \ref{sec4} is devoted to the bounds  for the Grushin-harmonic projector and contains the main technical results of this paper. Section \ref{sec5} addresses Carleman estimates, while Section \ref{sec6} is for the strong unique continuation property. Conclusions are given at the end of this paper.

\section{Preliminaries}\label{sec2}
\subsection{Grushin operator} In this subsection, we go through basic notions for Grushin operators. More details can be found in e.g. \cite{dl, Ga, liu}.

The Grushin space, denoted by $\mathbb{R}^{n+m}=\{(x, y): x\in\mathbb{R}^{n},  y\in \mathbb{R}^{m}\}$,
is the Carnot-Carath\'eodory space equipped with a system of vector fields
\begin{eqnarray} \label{v1}
X_{i}=\frac{\partial}{\partial x_{i}},\, i=1,\ldots, n, \qquad Y_{j}=|x|^{\alpha}\frac{\partial}{\partial y_{j}}, \,j=1, 2,\ldots, m.
\end{eqnarray}
 Here $\alpha>0$ is a given real number and $|x|=\left(\sum_{i=1}^{n}x_{i}^{2}\right)^{1/2}$ represents the standard Euclidean norm of $x$.
%
\begin{definition}
The Grushin operator $\Delta_{\alpha}$ is a differential operator on $\mathbb{R}^{n+m}$ defined by
\begin{eqnarray}\label{l1} \Delta_{\alpha}:=\sum_{i=1}^{n}X_{i}^{2}+\sum_{j=1}^{m}Y_{j}^{2}=
\Delta_{x}+|x|^{2\alpha}\Delta_{y}=\nabla_{\alpha}\cdot\nabla_{\alpha},
\end{eqnarray}
where $\nabla_{\alpha}$ is the horizontal (or Grushin) gradient, that is
\begin{eqnarray*}
\nabla_{\alpha}:(X_{1}, \ldots, X_{n}, Y_{1}, \ldots, Y_{m})=(\nabla_{x}, |x|^{\alpha}\nabla_{y}).
\end{eqnarray*}
A function is called  Grushin-harmonic if it is a solution of the equation $\Delta_{\alpha}u=0$.
\end{definition}

\begin{remark}\begin{enumerate}
\item When $\alpha=0,$ the Grushin operator $\Delta_{\alpha}$ reduces to the standard Laplacian on the Euclidean space $\mathbb{R}^{n+m}$.  In the case where $\alpha=1$,  $m=1$ and $n$ even, it is closely related to the Kohn Laplacian on the Heisenberg group $\mathbb{H}^{\frac{n}{2}}$, see \cite{Ga, GS}.
                \item  For $\alpha>0$, the Grushin  operator  $\Delta_{\alpha}$ is elliptic when $x\neq 0$ and degenerate on the characteristic submanifold $\{0\}\times \mathbb{R}^m$ of $\mathbb{R}^{n+m}$.
                \item  Grushin studied $\Delta_{\alpha}$ in \cite{Gru1,Gru2} for the case where $\alpha$ is a non-negative integer and established  its hypoellipticity.
                \item  When $\alpha=2k$ with $k\in \mathbb{N}$, the Grushin operator
 $\Delta_{\alpha}$ is a sum of squares of $C^\infty$ vector fields that satisfy H\"ormander finite rank
condition on the Lie algebra $ Lie[X_{1}, \ldots, X_{n}, Y_{1}, \ldots, Y_{m}]$, see 
\cite{hor}.
              \end{enumerate}
\end{remark}

There exists a natural family of anisotropic dilations  attached to the Grushin operator $\Delta_{\alpha}$, given by
\begin{eqnarray}\label{f1}\delta_{\lambda}(x,y):=(\lambda x, \lambda^{\alpha+1}y), \qquad \lambda>0,\, (x,y)\in \mathbb{R}^{n+m}. \end{eqnarray}
From Eq.\,\eqref{f1}, we see that the degeneracy of  $\Delta_{\alpha}$ becomes increasingly stronger as $\alpha\rightarrow \infty$.

The change of variable formula for Lebesgue measure yields
\[{\rm d}\circ\delta_{\lambda}(x, y)=\lambda^{Q}\,{\rm d}x\, {\rm d}y,\]
where \begin{equation} \label{hdm}
    Q=n+m(\alpha+1).
\end{equation}
The number $Q$ is commonly referred to as the homogeneous dimension related to the vector fields \eqref{v1}. It  plays an important role in the local analysis of $\Delta_{\alpha}$ at points of  the degeneracy manifold $\{0\}\times \mathbb{R}^{m}$. 

Let us now introduce homogeneous functions in this framework, see e.g. \cite{Ga}.

\begin{definition} Suppose $u$ is a function on $\mathbb{R}^{n+m}$. We say that $u$ is homogeneous of  degree $k\in \mathbb{N}$ with respect to \eqref{f1} (or $u$ is homogeneous of $\delta_{\lambda}$-degree  $k$) if for each $\lambda>0$, it holds that
\begin{eqnarray} \label{f2} u\circ\delta_{\lambda}=\lambda^{k}u.\end{eqnarray}
Equivalently, it can be  characterized by
\[\mathbb{E}_{\alpha} u=ku,\]
where $\mathbb{E}_{\alpha}$ is the generalized Euler's operator introduced  in \cite[Eq.\,(2.2)]{Ga}, given by
\begin{eqnarray}\label{eu1} \mathbb{E}_{\alpha}:=\sum_{i=1}^{n}x_{i}\frac{\partial}{\partial x_{i}}+(\alpha+1)\sum_{j=1}^{m}y_{j}\frac{\partial}{\partial y_{j}}.\end{eqnarray}
The smooth vector field $\mathbb{E}_{\alpha}$ is indeed the generator of the group $\{\delta_{\lambda}\}_{\lambda>0}$.
\end{definition}
\begin{remark} It is not difficult to verify that $X_{i}$ and $Y_{j}$ are homogeneous of degree one with respect to the dilations $\{\delta_{\lambda}\}_{\lambda>0}$ in \eqref{f1}. That is,
\[X_{i}\circ \delta_{\lambda}=\lambda\delta_{\lambda}\circ X_{i}, \qquad Y_{j}\circ\delta_{\lambda}=\lambda\delta_{\lambda}\circ Y_{j}.\] Hence, $\Delta_{\alpha}$ is homogeneous of degree two with respect to the dilations $\{\delta_{\lambda}\}_{\lambda>0}$. 
\end{remark}

Associated to the vector fields \eqref{v1}, for any $(x, y)\in \mathbb{R}^{n+m}$, the gauge norm  is defined by
\begin{eqnarray}\label{gn}\rho(x,y):=\left(|x|^{2(\alpha+1)}+(\alpha+1)^{2}|y|^{2}\right)^{\frac{1}{2(\alpha+1)}}. \end{eqnarray}
Furthermore, the gauge-balls and gauge-spheres with respect to $\rho$ centered at the origin with radius $r$ are defined  by
\[ B_{r}:=\left\{(x,y)\in \mathbb{R}^{n+m}|\rho(x,y)<r\right\}
\]
and \[\partial B_{r}:=\{(x,y)\in \mathbb{R}^{n+m}|\rho(x,y)=r\}.\]
Note that these definitions depend on the value of $\alpha.$

When $\alpha$ is a non-negative integer,  a Grushin-harmonic function $u(x,y)$ can be expressed using its analyticity, as shown in \cite{M}. Specifically, for $(x,y)$  near the  origin, one can write $u$ as a series:
\begin{eqnarray*}
u(x,y)=\sum_{k=0}^{\infty}u_{k}(x,y),
\end{eqnarray*}
 where each $u_{k}$ is a homogeneous polynomial of $\delta_{\lambda}$-degree $k$. This series converges uniformly and absolutely in some neighbourhood of the origin.  It is not difficult to see that each $u_{k}$ is also Grushin-harmonic. This means that every  Grushin-harmonic function can be represented as an infinite sum of homogeneous Grushin-harmonic polynomials near the origin.

Based on the aforementioned analyticity  considerations,  we will always assume that $\alpha$ is a non-negative integer in the rest of this paper, i.e. $\alpha\in \mathbb{N}_{0}:=\{0,1,2,\cdots\}$. Furthermore, these cases have an interesting background from geometry. As  mentioned earlier,  the Grushin operator $\Delta_{\alpha}$  with $\alpha\in \mathbb{N}_{0}$,   arises through a submersion from a sub-Laplace operator on a nilpotent  Lie group of step $\alpha+1$, see \cite[Proposition 3.1]{BFI}.


\subsection{Polar coordinates}
In this subsection, we introduce suitable polar coordinates to derive a
decomposition of the  Grushin operator $\Delta_{\alpha}$ in \eqref{l1}. These coordinates for vector fields were first introduced by Greiner for the Heisenberg group $\mathbb{H}^1$  in \cite{gre} and later extended by Dunkl to $\mathbb{H}^n$ in \cite{du}. They were later adapted to the Grushin setting in \cite{GS} for $\alpha=m=1$ and further studied for general cases, see e.g. \cite{dl, liu} for $\alpha>0$ and $ m\in \mathbb{N}$.

Let
\begin{eqnarray}\label{pc}
\left\{
  \begin{array}{ll}
    x_{1}=\rho\sin\phi(\sin^{2}\phi)^{-\frac{\alpha}{2\alpha+2}} \sin\theta_{1} \sin\theta_{2}\cdots \sin\theta_{n-2}\sin \theta_{n-1}, \\
    x_{2}=\rho\sin\phi(\sin^{2}\phi)^{-\frac{\alpha}{2\alpha+2}} \sin\theta_{1} \sin\theta_{2}\cdots \sin\theta_{n-2}\cos \theta_{n-1},  \\
   \vdots  \\
 x_{n}=\rho \sin\phi(\sin^{2}\phi)^{-\frac{\alpha}{2\alpha+2}}  \cos\theta_{1};\\
    y_{1}=\frac{1}{\alpha+1}\rho^{\alpha+1}\cos\phi\sin \beta_{1} \sin\beta_{2} \cdots \sin\beta_{m-2}\sin \beta_{m-1}, \\
    y_{2}=\frac{1}{\alpha+1}\rho^{\alpha+1}\cos\phi\sin \beta_{1} \sin\beta_{2} \cdots \sin\beta_{m-2}
\cos \beta_{m-1}, \\
   \vdots \\
    y_{m}=\frac{1}{\alpha+1}\rho^{\alpha+1}\cos\phi\cos\beta_{1},
  \end{array}
\right.
\end{eqnarray}
where $\rho$ is the gauge norm defined in \eqref{gn}$, \phi\in (a_{\phi}, b_{\phi}), 0\le\theta_{i}<\pi$, $i=1,\cdots, n-2$, $0\le\beta_{j}<\pi$,
$j=1,\cdots, m-2$, $0\le\theta_{n-1}<2\pi$ and $0\le \beta_{m-1}<2\pi$. Here $a_{\phi}$ and $b_{\phi}$ depend on $n$ and $m$, that is
\begin{eqnarray}\label{jdq}
\left\{
  \begin{array}{ll}
    \phi\in(0, \frac{\pi}{2}), & \hbox{if $n, m\ge 2$;} \\
    \phi\in (0, \pi), & \hbox{if $n\ge 2$ and $m=1$;} \\
    \phi\in (-\frac{\pi}{2}, \frac{\pi}{2}), & \hbox{if $n=1$ and $m\ge 2$;} \\
    \phi\in (0, 2\pi), & \hbox{if $n=m=1$.}
  \end{array}
\right.
\end{eqnarray}

Let $r_{1}=|x|$ and $r_{2}=|y|$. When $n,m\ge 2$, it is seen  from \eqref{pc} that
\begin{eqnarray}\label{pc1}
\left\{
  \begin{array}{ll}
    r_{1}=|x|=\rho\sin^{\frac{1}{\alpha+1}}\phi,  \\
    r_{2}=|y|=\frac{1}{\alpha+1}\rho^{\alpha+1}\cos\phi.
  \end{array}
\right.
\end{eqnarray}
A straightforward computation yields
\begin{eqnarray}
\frac{\partial(r_{1}, r_{2})}{\partial(\rho, \phi)}=
\begin{pmatrix}\sin^{\frac{1}{\alpha+1}}\phi & \frac{1}{\alpha+1}\rho\sin^{\frac{-\alpha}{\alpha+1}}\phi\cos\phi \\ \rho^{\alpha}\cos\phi& -\frac{1}{\alpha+1}\rho^{\alpha+1}\sin\phi\end{pmatrix}.
\end{eqnarray}
This gives
\begin{eqnarray*}
{\rm d}r_{1}{\rm d}r_{2}=\frac{1}{\alpha+1}\sin^{-\frac{\alpha}{\alpha+1}}\phi\,\rho^{\alpha+1}\,{\rm d}\rho\, {\rm d}\phi,
\end{eqnarray*}
and
\begin{equation}\label{mea} 
    \begin{split}
        {\rm d}x\,{\rm d}y=&\, r_{1}^{n-1}r_{2}^{m-1}\,{\rm d}r_{1}\,{\rm d}\omega_{1}\,{\rm d}r_{2}\,{\rm d}\omega_{2}\\
=&\, \frac{1}{(\alpha+1)^{m}}\rho^{(\alpha+1)m+(n-1)}\sin^{\frac{n}{\alpha+1}-1}\phi\cos^{m-1}\phi \\ &\times {\rm d} \rho \,{\rm d} \phi \,{\rm d} \omega_{1}\,{\rm d}\omega_{2},
    \end{split}
\end{equation}
where ${\rm d} \omega_{1}$ and ${\rm d} \omega_{2}$ are the Lebesgue measures on $\mathbb{S}^{n-1}$ and $\mathbb{S}^{m-1}$, respectively.

From Eq.\,\eqref{pc},  the gauge norm $\rho$ in \eqref{gn} and $\phi$ are
\begin{eqnarray*}
\left\{
  \begin{array}{ll}
    \rho=\left[r_{1}^{2(\alpha+1)}+(\alpha+1)^{2}r_{2}^{2}\right]^{\frac{1}{2(\alpha+1)}}, \\
    \phi=\arctan\frac{r_{1}^{\alpha+1}}{(\alpha+1)r_{2}}.
  \end{array}
\right.
\end{eqnarray*}
Again direct computation yields that
\begin{eqnarray} \label{lco}\,
\frac{\partial(\rho, \phi)}{\partial(r_{1}, r_{2})}=
\begin{pmatrix}\sin^{\frac{2\alpha+1}{\alpha+1}}\phi & \rho^{-\alpha}\cos\phi \\ (\alpha+1)\rho^{-1}\sin^{\frac{\alpha}{\alpha+1}}\phi\cos\phi& -(\alpha+1)\rho^{-(\alpha+1)}\sin\phi\end{pmatrix}.
\end{eqnarray}

Alternatively,  it is well-known that in the usual spherical coordinates,
\[\Delta_{x}=\frac{\partial^{2}}{\partial r_{1}^{2}}+\frac{n-1}{r_{1}}
\frac{\partial}{\partial r_{1}}+\frac{1}{r_{1}^{2}}\Delta_{\mathbb{S}^{n-1}},  \]
and
\[\Delta_{y}=\frac{\partial^{2}}{\partial r_{2}^{2}}+\frac{m-1}{r_{2}}
\frac{\partial}{\partial r_{2}}+\frac{1}{r_{2}^{2}}\Delta_{\mathbb{S}^{m-1}},  \]
where $\Delta_{\mathbb{S}^{n-1}}$ denotes the Laplace-Beltrami operator on $\mathbb{S}^{n-1}$. Let  $\psi(\phi)$ be the angle function defined by \begin{eqnarray} \label{ps1} \psi(\phi):=\sin^{\frac{2\alpha}{\alpha+1}}\phi=\left(\frac{r_{1}}{\rho}\right)^{2\alpha}.\end{eqnarray}
After a direct computation based on \eqref{lco}, the Grushin operator $\Delta_{\alpha}$ in \eqref{l1} can be expressed in the polar coordinates \eqref{pc} as
\begin{eqnarray}\label{ps}
\Delta_{\alpha}=\psi(\phi)\cdot\left(\frac{\partial^{2}}{\partial\rho^{2}}+\frac{Q-1}{\rho}\frac{\partial}{\partial\rho}
+\frac{1}{\rho^{2}}\Delta_{\sigma} \right),\end{eqnarray}
where $\sigma\in (\phi, \omega_{1}, \omega_{2})$, $\omega_{1}\in \mathbb{S}^{n-1}$, $\omega_{2}\in \mathbb{S}^{m-1}$, and
\begin{equation}\label{r1}
    \begin{split}
        \Delta_{\sigma}=&\, (\alpha+1)^{2}\frac{\partial^{2}}{\partial \phi^{2}}+(n+\alpha-1)(\alpha+1) \cot\phi\frac{\partial}{\partial\phi}\\
&-(m-1)(\alpha+1)^{2}\tan\phi\frac{\partial}{\partial\phi}
+\sin^{-2}\phi\,\Delta_{\mathbb{S}^{n-1}}\\&+(\alpha+1)^{2}
\cos^{-2}\phi\,\Delta_{\mathbb{S}^{m-1}},
    \end{split}
\end{equation}
 see also \cite[Eq.\,(2.3)]{liu}.
\begin{remark} From Eq.\,\eqref{ps}, we see that if a function $u$ depends only on the pseudo-distance $\rho$, i.e. $u(x, y)=f(\rho)$, then 
\[\Delta_{\alpha}u=\sin^{\frac{2\alpha}{\alpha+1}}\phi \cdot \left(f''(\rho)+\frac{Q-1}{\rho}f'(\rho)\right). \]
 This implies that the Grushin operator does not map functions of $\rho$ into functions of $\rho$, which is different with the Euclidean Laplacian. This feature of the Grushin operator, which was shared by the sub-Laplacian on the Heisenberg group, makes the analysis considerably harder than that of the Euclidean setting. This has been pointed out in several studies, see e.g. \cite{BM, Ga, GS}. 
\end{remark}
\subsection{Jacobi polynomials} Gegenbauer polynomials have been used to construct the orthogonal basis for Grushin-harmonics on $\mathbb{R}^{n+1}$, see \cite{GS, liu}. In the general higher step setting, Jacobi polynomials will play a prominent role. We introduce several important properties of these polynomials in this subsection.

The Jacobi polynomials $P_{n}^{(\alpha, \beta)}(x)$ are a family of orthogonal polynomials defined on the interval $[-1,1]$ with respect to the the weight $(1-x)^{\alpha}(1+x)^{\beta}$. They can be expressed using the terminating  Gauss hypergeometric series as follows:
\begin{equation}\label{jad1}
  P_{n}^{(\alpha, \beta)}(x)=\frac{(\alpha+1)_{n}}{n!}\,  {_{2}F_{1}}\left(-n, 1+\alpha+\beta+n; \alpha+1; \frac{1-x}{2}\right),  
\end{equation}
where \[(\alpha+1)_{n}:=\frac{\Gamma(\alpha+1+n)}{\Gamma(\alpha+1)}\] is the Pochhammer symbol. When $\alpha>-1$ and $\beta>-1$, they  satisfy  the orthogonality condition, i.e.
\begin{equation}\label{or1}
    \begin{split}
        &\int_{-1}^{1}(1-x)^{\alpha}(1+x)^{\beta}P_{m}^{(\alpha,\beta)}(x)P_{n}^{(\alpha,\beta)}(x)\,{\rm d}x\\
=&\frac{2^{\alpha+\beta+1}}{2n+\alpha+\beta+1}\frac{\Gamma(n+\alpha+1)\Gamma(n+\beta+1)}{\Gamma(n+\alpha+\beta+1)n!}\delta_{n,m}.
    \end{split}
\end{equation}
Here $\delta_{n, m}$ is the Kronecker delta. Note that Jacobi polynomials include the Gegenbauer polynomials
\begin{eqnarray}\label{gen}
C_{n}^{(\alpha)}(x)=\frac{(2\alpha)_{n}}{\left(\alpha+\frac{1}{2}\right)_{n}}P_{n}^{(\alpha-1/2, \alpha-1/2)}(x)
\end{eqnarray}
as special cases. Furthermore,  the  Gegenbauer polynomials $C_{n}^{(\alpha)}(x)$ reduce to the Legendre polynomials when $\alpha=1/2$.

Extensive research has been conducted on the asymptotic behavior of Jacobi polynomials for large values of $n$, see e.g. \cite[Chapter VIII]{or}.  However,  most of the existing formulas do not provide uniform estimates for  $\alpha$ and $\beta$. An important  problem is to provide a uniform estimate for
\begin{eqnarray}
(1-x)^{a}(1+x)^{b}\left|P_{n}^{(\alpha, \beta)}(x)\right|
\end{eqnarray}
over the entire interval $[-1,1]$, where $a$ and $b$ are non-negative constants. The first result of this type is Bernstein’s inequality for the Legendre polynomials (see e.g. \cite[Theorem 7.3.3]{or}). It can be stated as follows:
\begin{eqnarray}\label{le}
(1-x^{2})^{1/4}\left|P_{n}(x)\right|\le \frac{2}{\sqrt{\pi(2n+1)}}, \qquad \forall\, x\in [-1, 1].
\end{eqnarray}
The constant $\sqrt{2/\pi}$ in \eqref{le} is known to be sharp. This problem has attracted considerable attention  over the last years, see e.g. \cite{emn, HS, kra}. For a comprehensive review of the progress made on this problem and more advanced estimates, we refer to \cite{KKT}.

 In this research, we need a uniform (in some sense) estimate for their weighted $L^{2}$-norms.
The following uniform estimate for Jacobi polynomials recently obtained in \cite{HS} will be used for that purpose. For $x\in [-1, 1]$ and $\alpha,  \beta\ge0$, let
\begin{equation}\label{g32}
    \begin{split}
       g_{n}^{(\alpha,\beta)}(x):=&\left(\frac{\Gamma(n+1)\Gamma(n+\alpha+\beta+1)}{\Gamma(n+\alpha+1)\Gamma(n+\beta+1)} \right)^{\frac{1}{2}}\\ &\times
\left(\frac{1-x}{2}\right)^{\frac{\alpha}{2}}\left(\frac{1+x}{2}\right)^{\frac{\beta}{2}}P_{n}^{(\alpha,\beta)}(x).
    \end{split}
\end{equation}
Then the following Bernstein-type inequality holds,
\begin{theorem}  There exists a constant  $C>0$ such that
\begin{eqnarray}\label{bs}
\left|(1-x^{2})^{\frac{1}{4}}g_{n}^{(\alpha,\beta)}(x)\right|\le C(2n+\alpha+\beta+1)^{-\frac{1}{4}}
\end{eqnarray}
for all $x\in [-1, 1]$, all $\alpha,\beta\ge0$, and all non-negative integers $n$.
\end{theorem}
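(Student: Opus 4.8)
The plan is to pass to the trigonometric Liouville (normal) form of the Jacobi equation, reduce the claimed inequality to an $L^{\infty}$ bound for a solution of a Schr\"odinger-type ODE with a fixed $L^{2}$-mass, and then control that solution by a Sonin--P\'olya monotonicity argument supplemented by a turning-point analysis. First I would substitute $x=\cos\theta$, $\theta\in(0,\pi)$, and set
\[
v(\theta):=(\sin\theta)^{1/2}\,g_{n}^{(\alpha,\beta)}(\cos\theta).
\]
Since $(1-x^{2})^{1/4}=(\sin\theta)^{1/2}$, the quantity to be estimated is exactly $|v(\theta)|$, so it suffices to bound $\|v\|_{L^{\infty}(0,\pi)}$. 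Using the Jacobi differential equation together with $\left(\tfrac{1-x}{2}\right)^{\alpha/2}=(\sin\tfrac\theta2)^{\alpha}$ and $\left(\tfrac{1+x}{2}\right)^{\beta/2}=(\cos\tfrac\theta2)^{\beta}$, Szeg\H{o}'s reduction to normal form shows that $v$ solves
\[
v''(\theta)+\Phi(\theta)\,v(\theta)=0,\qquad
\Phi(\theta)=N^{2}+\frac{\tfrac14-\alpha^{2}}{4\sin^{2}\tfrac\theta2}+\frac{\tfrac14-\beta^{2}}{4\cos^{2}\tfrac\theta2},
\]
with $N=n+\tfrac{\alpha+\beta+1}{2}$, so $2N=2n+\alpha+\beta+1$. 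Moreover the orthogonality relation \eqref{or1} and the normalizing prefactor in \eqref{g32} yield the clean identity $\int_{0}^{\pi}v^{2}\,d\theta=\int_{-1}^{1}(g_{n}^{(\alpha,\beta)})^{2}\,dx=\tfrac{2}{2n+\alpha+\beta+1}=\tfrac1N$. Thus \eqref{bs} is equivalent to the uniform comparison $\|v\|_{\infty}\le C\,N^{-1/4}$, i.e. $\|v\|_{\infty}^{2}\le C^{2}N^{1/2}\|v\|_{2}^{2}$.

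Next I would exploit the Sonin--P\'olya energy $E(\theta):=v(\theta)^{2}+v'(\theta)^{2}/\Phi(\theta)$, defined wherever $\Phi>0$; the ODE gives the one-line computation $E'(\theta)=-\Phi'(\theta)\Phi(\theta)^{-2}v'(\theta)^{2}$, so $E$ increases exactly where $\Phi$ decreases. Since $v^{2}\le E$ everywhere and $E=v^{2}$ at each relative extremum of $v$ (where $v'=0$), this monotonicity transfers to the successive squared local maxima of $|v|$: they grow as one moves toward smaller $\Phi$. Here two regimes appear. If $\alpha,\beta\le\tfrac12$ then $\Phi>0$ throughout $(0,\pi)$ and is unimodal with an interior minimum, so $\|v\|_{\infty}^{2}=\max E$ is attained there; bounding it by $\sqrt{E}$ at the minimum of $\Phi$ and invoking $\int v^{2}=1/N$ closes this (easy) case with room to spare. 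If instead $\alpha>\tfrac12$ or $\beta>\tfrac12$, then $\Phi\to-\infty$ at the corresponding endpoint, turning points $\theta_{\pm}$ with $\Phi(\theta_{\pm})=0$ appear, and on the non-oscillatory end regions $v''=-\Phi v$ has the sign of $v$, so $|v|$ has no interior maximum there and decays like $(\sin\tfrac\theta2)^{\alpha+1/2}$ (resp.\ $(\cos\tfrac\theta2)^{\beta+1/2}$) at the endpoints. Hence $\|v\|_{\infty}$ is attained at the extremum of $v$ closest to a turning point.

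The crux is then to bound this extreme local maximum by $C\,N^{-1/2}$ uniformly in $n,\alpha,\beta$. I would combine a uniform bulk lower bound $\Phi(\theta)\gtrsim N$ (obtained by directly minimizing $\Phi$ on the oscillatory window) with the normalization $\int v^{2}=1/N$: on the bulk the WKB relation $v^{2}\approx(\mathrm{const})\,\Phi^{-1/2}$, with the constant pinned down via $\tfrac{d}{d\theta}(vv')=v'^{2}-\Phi v^{2}$ averaged over one oscillation, already gives the bulk bound $v^{2}\lesssim N^{-1/2}$. The genuine obstacle, and where I expect the difficulty to concentrate, is the turning-point enhancement: as $\Phi\to0$ the Sonin energy $E$ itself blows up while $v$ stays bounded and develops an Airy-type peak, and this peak—extremal e.g.\ when $\alpha=\beta\to\infty$ with $n$ fixed, where the oscillatory window shrinks to a neighbourhood of $\theta=\tfrac\pi2$—is of exact order $N^{-1/4}$ rather than the bulk order $N^{-1/2}$. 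It is precisely this peak that forces the exponent in \eqref{bs}. The rigorous, parameter-uniform way to capture it is an error-controlled Liouville--Green (Airy) comparison across $\theta_{\pm}$, replacing the ODE locally by Airy's equation with Olver-type explicit error bounds that are uniform in $\alpha,\beta,n$; equivalently, one estimates $E$ at the last extremum before $\theta_{\pm}$ using the linearization $\Phi(\theta)\approx|\Phi'(\theta_{\pm})|\,|\theta-\theta_{\pm}|$ together with a uniform lower bound $|\Phi'(\theta_{\pm})|\gtrsim N$. Matching the Airy peak value $\sim A\,|\Phi'(\theta_{\pm})|^{-1/6}$ to the WKB amplitude $A=O(1)$ fixed by the $L^{2}$-normalization yields $\|v\|_{\infty}\lesssim N^{-1/4}$. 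In short, the normal-form reduction and the Sonin--P\'olya monotonicity are routine; keeping every constant independent of $\alpha,\beta,n$ through the turning point is the delicate part, and is exactly the content of the estimate of \cite{HS} invoked here.
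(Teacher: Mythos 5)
There is no proof to compare against here: the paper does not prove this theorem, it imports it verbatim from Haagerup--Schlichtkrull \cite{HS} (the surrounding remarks about $C<12$ and the ${\rm SU}(2)$ interpretation make clear it is a quoted result). So your attempt has to stand on its own, and as a standalone argument it has a genuine gap. Your setup is correct and well motivated: the substitution $v(\theta)=(\sin\theta)^{1/2}g_{n}^{(\alpha,\beta)}(\cos\theta)$ does produce Szeg\H{o}'s normal form $v''+\Phi v=0$ with $\Phi=N^{2}+\frac{1/4-\alpha^{2}}{4\sin^{2}(\theta/2)}+\frac{1/4-\beta^{2}}{4\cos^{2}(\theta/2)}$, the normalization $\int_{0}^{\pi}v^{2}\,d\theta=2/(2n+\alpha+\beta+1)$ follows from \eqref{or1} and \eqref{g32}, the Sonin--P\'olya computation $E'=-\Phi'\Phi^{-2}(v')^{2}$ is right, and your identification of the extremal regime ($\alpha=\beta\to\infty$ with $n$ fixed, oscillatory window of width $\sim N^{-1/2}$, Airy peak of height $\sim N^{-1/4}$) correctly explains why the exponent $1/4$ is sharp. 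But the theorem is a uniform statement over \emph{all} $\alpha,\beta\ge 0$ and all $n$, and the only step that actually carries the content of the theorem --- the bound on the last local maximum of $|v|$ before a turning point, with a constant independent of $\alpha,\beta,n$ --- is described ("error-controlled Liouville--Green comparison with Olver-type bounds uniform in $\alpha,\beta,n$", "matching the Airy peak to the WKB amplitude fixed by the $L^{2}$-normalization") rather than proved. Classical uniform Airy asymptotics for Jacobi polynomials are uniform in $n$ for fixed $(\alpha,\beta)$; uniformity as $\alpha,\beta\to\infty$ simultaneously is exactly what is nontrivial, and you concede this yourself in your final sentence by saying it "is exactly the content of the estimate of \cite{HS}". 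That makes the argument circular as a proof of the theorem.

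Two smaller soft spots: in the "easy case" $\alpha,\beta\le 1/2$, "bounding it by $\sqrt{E}$ at the minimum of $\Phi$ and invoking $\int v^{2}=1/N$" is not an argument --- to convert the $L^{2}$ normalization into a pointwise bound on $E$ you must show the peak of $v^{2}$ has width bounded below (e.g. by averaging $v^{2}\sim E/2$ over whole oscillations and bounding the length of the oscillatory window from below by $cN^{-1/2}$), and none of that is written down; and the "WKB relation $v^{2}\approx(\mathrm{const})\Phi^{-1/2}$" is a heuristic whose rigorous, parameter-uniform version (Pr\"ufer variables or an energy identity) is again essentially the missing estimate. In short: the reduction and the monotonicity machinery are routine and correct, but the quantitative turning-point step that constitutes the theorem is asserted, not established.
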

\begin{remark} The optimal  value of $C$ is not yet known. However, it is known  at least that  $C<12$.
\end{remark}
 \begin{remark}
 With suitable coordinates, the functions $g_{n}^{(\alpha,\beta)}(x)$
with  non-negative integers $\alpha$ and $\beta$ form a natural and complete set of matrix coefficients for the irreducible
representations of ${\rm SU}(2)$. Interestingly, the value $2n + \alpha+ \beta + 1$ in \eqref{bs} is
exactly the dimension of the corresponding irreducible representation, see e.g. \cite[\S 9.14]{aar} and \cite[Theorem 2.1]{HS}. It follows that for all $x\in[-1, 1]$, $\alpha,\beta \in \mathbb{N}_{0}$ and $n\in \mathbb{N}_{0}$, the inequality
\begin{equation}\label{bou1}
    \left|g_{n}^{(\alpha,\beta)}(x)\right|\le 1
\end{equation}
holds.  This was studied in depth  and a tighter bound for $g_{n}^{(\alpha,\beta)}(x)$ was obtained, see  \cite[p. 234 (20)]{HS}.
\end{remark}}


\section{Spherical harmonics for the Grushin operator} \label{31}

In \cite{GS}, an orthogonal basis was constructed  for  Grushin-harmonic polynomials of the operator $\Delta_{\alpha}$ on  $\mathbb{R}^{n+1}$, where  $\alpha=1$,  using Gegenbauer polynomials \eqref{gen} and ordinary spherical harmonics. More recently, a similar orthogonal basis was discovered for general $\Delta_{\alpha}$ with $\alpha\in \mathbb{N}_{0}$ on $\mathbb{R}^{n+1}$  in \cite{liu}. Meanwhile, it  was observed in \cite{liu} that the Grushin harmonics on $\mathbb{R}^{n+m}$, when $m>1$,  exhibit increased complexity. In this section, we  address this issue for $\mathbb{R}^{n+m}$ with $n, m\ge2$. 

\subsection{Orthogonal  basis for  Grushin-harmonics}\label{shg}




Let $u_{k}$ be a homogeneous polynomial of $\delta_{\lambda}$-degree $k$ on $\mathbb{R}^{n+m}$. We write it in the polar coordinates \eqref{pc} as
 \begin{eqnarray}\label{gd1} u_{k}=\rho^{k}g(\phi, \omega_{1}, \omega_{2}),\end{eqnarray}
where $\omega_{1}\in \mathbb{S}^{n-1}$ and $\omega_{2} \in \mathbb{S}^{m-1}$.

We first give a specific family of Grushin-harmonic polynomials using Jacobi polynomials and ordinary spherical harmonics.

\begin{theorem}\label{th1} Suppose $n,m \ge 2$ and $\alpha, k\in \mathbb{N}_{0}$.  Let
 $Y_{\ell}(\omega_{1})$ and $Y_{j}(\omega_{2})$ be ordinary spherical harmonics of degree $\ell$ on $\mathbb{S}^{n-1}$ and degree $j$ on $\mathbb{S}^{m-1}$, respectively. Here $\ell$ and $j$ are non-negative integers satisfying the condition
\begin{eqnarray}\label{k1}\widetilde{k}:=\frac{k-\ell-j(\alpha+1)}{2(\alpha+1)}\in \mathbb{N}_{0}.\end{eqnarray}
Then,  the  function
\begin{eqnarray*}
u_{k}=\rho^{k}h_{k,\ell, j}(\phi)Y_{\ell}(\omega_{1})Y_{j}(\omega_{2}),
\end{eqnarray*}
where
\begin{eqnarray}\label{g1}h_{k, \ell, j}(\phi)=\cos^{j}\phi\,\sin^{\frac{\ell}{\alpha+1}}\phi\,
P_{\widetilde{k}}^{(\mu, \gamma-1)}(\cos 2\phi),
\end{eqnarray}
 with $\gamma=j+m/2$ and $\mu=\frac{n-2+2\ell}{2(\alpha+1)}$,  is  a homogeneous Grushin-harmonic polynomial of $\delta_{\lambda}$-degree $k$.
\end{theorem}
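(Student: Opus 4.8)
The plan is to substitute the polar form \eqref{ps} of $\Delta_{\alpha}$ and reduce the statement to a single ordinary differential equation in $\phi$, which I then identify as a Jacobi equation. Writing $u_{k}=\rho^{k}g$ with $g=h_{k,\ell,j}(\phi)Y_{\ell}(\omega_{1})Y_{j}(\omega_{2})$ and applying \eqref{ps}, the radial derivatives contribute $\rho^{k-2}\bigl(k(k-1)+(Q-1)k\bigr)g=\rho^{k-2}k(k+Q-2)g$, so $\Delta_{\alpha}u_{k}=\psi(\phi)\rho^{k-2}\bigl(k(k+Q-2)g+\Delta_{\sigma}g\bigr)$. Since $\psi(\phi)>0$ on the open range of $\phi$, Grushin-harmonicity is equivalent to the eigenvalue equation $\Delta_{\sigma}g=-k(k+Q-2)g$. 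Inserting $\Delta_{\mathbb{S}^{n-1}}Y_{\ell}=-\ell(\ell+n-2)Y_{\ell}$ and $\Delta_{\mathbb{S}^{m-1}}Y_{j}=-j(j+m-2)Y_{j}$ into \eqref{r1} and dividing by $Y_{\ell}Y_{j}$ reduces the problem to verifying that $h=h_{k,\ell,j}$ solves
\begin{equation*}
\begin{split}
(\alpha+1)^{2} h'' &+ \bigl[(n+\alpha-1)(\alpha+1)\cot\phi - (m-1)(\alpha+1)^{2}\tan\phi\bigr] h' \\
&- \Bigl[\tfrac{\ell(\ell+n-2)}{\sin^{2}\phi} + \tfrac{(\alpha+1)^{2}j(j+m-2)}{\cos^{2}\phi}\Bigr] h = -k(k+Q-2)h.
\end{split}
\end{equation*}

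The central step is to strip off the singular terms via the substitution $h=(\sin\phi)^{\ell/(\alpha+1)}(\cos\phi)^{j}f$. A short indicial computation at $\phi=0$ and $\phi=\pi/2$ shows that the exponents $\ell/(\alpha+1)$ and $j$ are precisely the nonnegative roots that annihilate the $\sin^{-2}\phi$ and $\cos^{-2}\phi$ coefficients respectively; one finds that the combined zeroth-order contribution of the prefactor is the constant $\lambda_{0}=-(\ell+(\alpha+1)j)\bigl(\ell+(\alpha+1)j+Q-2\bigr)$. Changing variables to $t=\cos 2\phi$, using $\frac{d^{2}}{d\phi^{2}}=4(1-t^{2})\frac{d^{2}}{dt^{2}}-4t\frac{d}{dt}$ and $\sin^{2}2\phi=1-t^{2}$, I expect the first-order coefficient to reduce exactly to $(\gamma-1-\mu)-(\mu+\gamma+1)t$ with $\mu=\frac{n-2+2\ell}{2(\alpha+1)}$ and $\gamma-1=j+m/2-1$ (here the algebraic identities $\frac{2\ell+n+\alpha-1}{2(\alpha+1)}=\mu+\tfrac12$ and $\frac{2j+m-1}{2}=\gamma-\tfrac12$ do the work), and the zeroth-order coefficient $k(k+Q-2)+\lambda_{0}$ to factor as $4(\alpha+1)^{2}\widetilde{k}(\widetilde{k}+\mu+\gamma)$. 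This factorization is where \eqref{k1} and $Q=n+m(\alpha+1)$ from \eqref{hdm} enter: setting $p=\ell+(\alpha+1)j$ one writes $k(k+Q-2)+\lambda_{0}=(k-p)(k+p+Q-2)$ and checks $k-p=2(\alpha+1)\widetilde{k}$ together with $k+p+Q-2=2(\alpha+1)(\widetilde{k}+\mu+\gamma)$. The equation for $f$ is then exactly the Jacobi differential equation with parameters $(\mu,\gamma-1)$ and degree $\widetilde{k}$, so $f=P_{\widetilde{k}}^{(\mu,\gamma-1)}(t)$, which is a genuine polynomial because \eqref{k1} forces $\widetilde{k}\in\mathbb{N}_{0}$.

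It remains to confirm that $u_{k}$ is a polynomial in $(x,y)$ and not merely a separated solution. Using \eqref{pc1} and \eqref{gn} one has $(\sin\phi)^{\ell/(\alpha+1)}=(|x|/\rho)^{\ell}$, $(\cos\phi)^{j}=\bigl((\alpha+1)|y|/\rho^{\alpha+1}\bigr)^{j}$, and $\cos 2\phi=\bigl((\alpha+1)^{2}|y|^{2}-|x|^{2(\alpha+1)}\bigr)/\rho^{2(\alpha+1)}$. Hence $\rho^{\ell}(\sin\phi)^{\ell/(\alpha+1)}Y_{\ell}(\omega_{1})$ and $\rho^{(\alpha+1)j}(\cos\phi)^{j}Y_{j}(\omega_{2})$ are the solid harmonics $|x|^{\ell}Y_{\ell}$ and $(\alpha+1)^{j}|y|^{j}Y_{j}$, while $\rho^{2(\alpha+1)\widetilde{k}}P_{\widetilde{k}}^{(\mu,\gamma-1)}(\cos 2\phi)$ is a polynomial in $|x|^{2(\alpha+1)}$ and $|y|^{2}$. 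All powers of $\rho$ cancel precisely because $2(\alpha+1)\widetilde{k}=k-\ell-(\alpha+1)j$, exhibiting $u_{k}$ as a product of honest polynomials, and the $\delta_{\lambda}$-degrees $\ell$, $(\alpha+1)j$, and $2(\alpha+1)\widetilde{k}$ sum to $k$, giving homogeneity. The main obstacle is the ODE bookkeeping of the middle paragraph—the cancellation of the singular terms and the exact matching of the drift coefficient and eigenvalue to the Jacobi normalization—but each piece is a finite, self-checking computation rather than a conceptual difficulty.
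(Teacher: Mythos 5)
Your proposal is correct and follows the same overall strategy as the paper: pass to the polar form \eqref{ps}, reduce Grushin-harmonicity to the eigenvalue equation $\Delta_{\sigma}g=-k(k+Q-2)g$, separate variables using the spherical-harmonic eigenvalues, and solve the resulting ODE in $\phi$ by Jacobi polynomials. The only real divergence is in how the ODE is handled: the paper substitutes $t=\cos\phi$, strips off only the factor $(1-t^{2})^{-\ell/(2(\alpha+1))}$, and then recognizes a hypergeometric solution $t^{j}\,{}_{2}F_{1}(\cdot,\cdot;\cdot;t^{2})$ which it converts to a Jacobi polynomial via \eqref{jad1} and the reflection identity $P_{j}^{(\alpha,\beta)}(-x)=(-1)^{j}P_{j}^{(\beta,\alpha)}(x)$; you instead strip off both singular prefactors $(\sin\phi)^{\ell/(\alpha+1)}(\cos\phi)^{j}$ at once and change variables to $t=\cos 2\phi$, landing directly on the Jacobi differential equation with parameters $(\mu,\gamma-1)$ — a slightly cleaner route whose coefficient and eigenvalue matchings ($\gamma-1-\mu-(\mu+\gamma+1)t$ and $\widetilde{k}(\widetilde{k}+\mu+\gamma)$ via $(k-p)(k+p+Q-2)$ with $p=\ell+(\alpha+1)j$) all check out. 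You also add an explicit verification that $u_{k}$ is a genuine polynomial in $(x,y)$ of $\delta_{\lambda}$-degree $k$, which the paper leaves implicit; that is a worthwhile supplement rather than a correction.
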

\begin{proof} Using formula \eqref{ps} for the Grushin operator $\Delta_{\alpha}$ in polar coordinates, a homogeneous polynomial $u_{k}$ of $\delta_{\lambda}$-degree $k$ is a solution of \[\Delta_{\alpha}u=0\]  if and only if $g(\phi, \omega_{1}, \omega_{2})$ in \eqref{gd1} is an eigenfunction of $\Delta_{\sigma}$ in \eqref{r1} with eigenvalue $-k(k+Q-2)$, i.e.
\begin{eqnarray}\label{f3}
\Delta_{\sigma}g=-k(k+Q-2)g,
\end{eqnarray}
where $Q=n+m(\alpha+1)$ is the homogeneous dimension.

According to \eqref{r1} and the well-known  property of ordinary spherical harmonics \begin{eqnarray*}
\left\{
  \begin{array}{ll}
    \Delta_{\mathbb{S}^{n-1}}Y_{\ell}(\omega_{1})=-\ell(\ell+n-2)Y_{\ell}(\omega_{1}), \\
    \Delta_{\mathbb{S}^{m-1}}Y_{j}(\omega_{2})=-j(j+m-2)Y_{j}(\omega_{2}),
  \end{array}
\right.
\end{eqnarray*}
it is seen that Eq.\,\eqref{f3} holds if and only if
\begin{equation}\label{s1}
    \begin{split}
        &
(\alpha+1)^{2}h''(\phi)+(n+\alpha-1)(\alpha+1) (\cot\phi) h'(\phi)\\ &-(m-1)(\alpha+1)^{2}(\tan\phi) h'(\phi)
+(\sin\phi)^{-2}(-\ell(\ell+n-2))h(\phi)\\ &+(\alpha+1)^{2}
(\cos\phi)^{-2}(-j(j+m-2))h(\phi)+k(k+Q-2)h(\phi)=0.
    \end{split}
\end{equation}

Letting $t=\cos\phi$ and $h(\phi)=v(t)$,  Eq.\,\eqref{s1} transforms into
\begin{equation} \label{jace}
    \begin{split}
        &(\alpha+1)^{2}(1-t^{2})\,\frac{{\rm d}^{2}}{{\rm d}t^{2}}v(t)-(\alpha+1)(n+2\alpha)t\,\frac{{\rm d}}{{\rm d}t}v(t)\\&+(m-1)(\alpha+1)^{2}\frac{1-t^{2}}{t}\,
\frac{{\rm d}}{{\rm d}t}v(t)+\frac{1}{1-t^{2}}(-\ell(\ell+n-2))v(t)
\\&+(\alpha+1)^{2}(-j(j+m-2))\frac{1}{t^{2}}v(t)+k(k+Q-2)v(t)=0.
    \end{split}
\end{equation}

Setting $w(t)=(1-t^{2})^{-\frac{\ell}{2(\alpha+1)}}v(t)$, then  $w(t)$ solves
\begin{equation}\label{e1}
    \begin{split}
        &t^{2}(1-t^{2})\frac{{\rm d}^{2}w(t)}{{\rm d}t^{2}}+t\left[\left(1-m-\frac{2\ell+n+2\alpha}{\alpha+1}\right)t^{2}+m-1\right]\frac{{\rm d}w(t)}{{\rm d}t}\\
-&\left[c(\alpha,\ell, k)t^{2}+j(j+m-2) \right] w(t)=0,
    \end{split}
\end{equation}
where \[c(\alpha,\ell, k)=\frac{\ell(\ell+n+(\alpha+1)m-2)-k(k+n+(\alpha+1)m-2)}{(\alpha+1)^{2}}.\]
One hypergeometric solution of  Eq.\,\eqref{e1}  is found to be 
\begin{eqnarray}\label{jp}&&w(t)\\
&=&t^{j}\, {_2F_{1}\left(\frac{j(a+1)-k+\ell}{2(\alpha+1)}, \frac{-2+j(\alpha+1)+k+\ell+m(\alpha+1)+n}{2(\alpha+1)}, j+\frac{m}{2}, t^{2}\right)}.\nonumber
\end{eqnarray}
It is a polynomial when
\[\widetilde{k}:=-\frac{j(a+1)-k+\ell}{2(\alpha+1)}\in \mathbb{N}_{0}, \]
or in other words when $ \ell+(\alpha+1)j\equiv k\left({\rm mod}(2\alpha+2)\right)$. Using \eqref{jad1},
the solution \eqref{jp} can be expressed   in terms of Jacobi polynomials, i.e.
\begin{eqnarray*}
w(t)&=&t^{j}\cdot{_2F_{1}\left(-\widetilde{k}, p+\widetilde{k}, \gamma, t^{2}\right)}\\
&=&t^{j}\frac{\widetilde{k}!}{(\gamma)_{\widetilde{k}}}P_{\widetilde{k}}^{(\gamma-1, p-\gamma)}(1-2t^{2})\\
&=&(-1)^{\widetilde{k}}t^{j}\frac{\widetilde{k}!}{(\gamma)_{\widetilde{k}}}P_{\widetilde{k}}^{(p-\gamma,\gamma-1)}(2t^{2}-1),
\end{eqnarray*}
where $\gamma=j+m/2$ and $p=\frac{n-2+2\ell}{2(\alpha+1)}+\gamma$. 
Here the second equality is by \eqref{jad1} and the last step is by the property $P_{j}^{(\alpha, \beta)}(-x)=(-1)^{j}P_{j}^{(\beta, \alpha)}(x)$.
This completes the proof.
\end{proof}
\begin{remark} The solutions \eqref{g1} can be verified directly by plugging them into  Eq.\,\eqref{jace} and then comparing with the equation with $\alpha=0$, which corresponds to the Euclidean case and is of course well-understood, see e.g. \cite[Eq.\,(4.3)]{koo}.
\end{remark}

In general, a second order ordinary differential equation possesses two linearly independent solutions. These solutions  can be combined to derive the general solution. The significance of the  hypergeometric solution \eqref{jp} selected in Theorem \ref{th1}, can be seen from the following lemma. This lemma follows from the basic properties of Jacobi polynomials and changing variables. The proof is thus omitted.
\begin{lemma}\label{hw1} Let $n, m\ge 2$. For fixed $\alpha, \ell, j\in\mathbb{N}_{0}$,
we denote $\gamma=j+m/2$ and $\mu=\frac{n-2+2\ell}{2(\alpha+1)}$ as in Theorem \ref{th1}. Then the set
\begin{eqnarray}\label{f4}\left\{h_{\widetilde{k}}(\phi):=\cos^{j}\phi\sin^{\frac{\ell}{\alpha+1}}\phi
P_{\widetilde{k}}^{(\mu, \gamma-1)}(\cos 2\phi)\right\}_{\widetilde{k}=0}^{\infty}\end{eqnarray}
forms a complete and  orthogonal basis of   $L^{2}\left([0,\pi/2], \sin^{\frac{n-2}{\alpha+1}+1}\phi\,\cos^{m-1}\phi \,{\rm d}\phi\right)$.
Moreover, it holds that
\begin{equation} \label{n1}
    \begin{split}
        &\int_{0}^{\pi/2}h_{\widetilde{k_{1}}}(\phi)h_{\widetilde{k_{2}}}(\phi)\sin^{\frac{n-2}{\alpha+1}+1}\phi\,\cos^{m-1}\phi \,{\rm d}\phi
\\=&\, \frac{1}{2(2\widetilde{k_{1}}+\mu+\gamma)}\frac{\Gamma(\widetilde{k_{1}}+\mu+1)\Gamma(\widetilde{k_{1}}+\gamma)}
{\Gamma(\widetilde{k_{1}}+\mu+\gamma)\Gamma(\widetilde{k_{1}}+1)}\delta_{\widetilde{k_{1}},\widetilde{k_{2}}}, \quad \gamma>0,\, \mu>-1.
    \end{split}
\end{equation}
\end{lemma}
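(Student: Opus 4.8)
The plan is to reduce Lemma \ref{hw1} to the orthogonality relation \eqref{or1} for Jacobi polynomials by a single change of variables, so that everything follows from the classical theory. First I would set $t=\cos 2\phi$, so that as $\phi$ runs over $[0,\pi/2]$ the variable $t$ runs over $[-1,1]$. Under this substitution one has $\mathrm{d}t=-2\sin 2\phi\,\mathrm{d}\phi = -4\sin\phi\cos\phi\,\mathrm{d}\phi$, together with $\sin^2\phi=\tfrac{1-t}{2}$ and $\cos^2\phi=\tfrac{1+t}{2}$. The goal is to verify that the weight $\sin^{\frac{n-2}{\alpha+1}+1}\phi\,\cos^{m-1}\phi\,\mathrm{d}\phi$ together with the factors $\cos^j\phi\,\sin^{\frac{\ell}{\alpha+1}}\phi$ coming from $h_{\widetilde k_1}$ and $h_{\widetilde k_2}$ collapses exactly into the Jacobi weight $(1-t)^{\mu}(1+t)^{\gamma-1}$ (up to a constant power of $2$ and a Jacobian factor), with the exponents $\mu=\frac{n-2+2\ell}{2(\alpha+1)}$ and $\gamma-1=j+\frac{m}{2}-1$ as defined in the statement.

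Concretely, I would collect the powers of $\sin\phi$ and $\cos\phi$. The two angular prefactors contribute $\cos^{2j}\phi\,\sin^{\frac{2\ell}{\alpha+1}}\phi$, and the measure contributes $\sin^{\frac{n-2}{\alpha+1}+1}\phi\,\cos^{m-1}\phi$; absorbing the Jacobian $\mathrm{d}\phi = -\tfrac{1}{4}(\sin\phi\cos\phi)^{-1}\mathrm{d}t$ removes one further power of each. The net power of $\sin\phi$ should be $\frac{2\ell}{\alpha+1}+\frac{n-2}{\alpha+1}+1-1=\frac{n-2+2\ell}{\alpha+1}=2\mu$, giving $\sin^{2\mu}\phi=\bigl(\tfrac{1-t}{2}\bigr)^{\mu}$, and the net power of $\cos\phi$ should be $2j+m-1-1=2j+m-2=2(\gamma-1)$, giving $\cos^{2(\gamma-1)}\phi=\bigl(\tfrac{1+t}{2}\bigr)^{\gamma-1}$. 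Thus the integral becomes, up to an explicit constant, $\int_{-1}^{1}(1-t)^{\mu}(1+t)^{\gamma-1}P_{\widetilde k_1}^{(\mu,\gamma-1)}(t)P_{\widetilde k_2}^{(\mu,\gamma-1)}(t)\,\mathrm{d}t$, and completeness of $\{h_{\widetilde k}\}$ in the weighted $L^2$ space follows from the completeness of the Jacobi polynomials in $L^2\bigl([-1,1],(1-t)^{\mu}(1+t)^{\gamma-1}\mathrm{d}t\bigr)$, which holds precisely when $\mu>-1$ and $\gamma-1>-1$, i.e. $\gamma>0$. I would then apply \eqref{or1} with these parameters to read off the normalization constant.

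The only genuine bookkeeping is to confirm that the constant produced by \eqref{or1} matches the right-hand side of \eqref{n1}. Applying \eqref{or1} with $(\alpha,\beta)=(\mu,\gamma-1)$ and index $\widetilde k_1$ yields a factor $\frac{2^{\mu+\gamma}}{2\widetilde k_1+\mu+\gamma}\cdot\frac{\Gamma(\widetilde k_1+\mu+1)\Gamma(\widetilde k_1+\gamma)}{\Gamma(\widetilde k_1+\mu+\gamma)\,\widetilde k_1!}$; this must be reconciled with the constant $\tfrac14$ and the powers of $2$ absorbed in the change of variables, and the clean cancellation down to the stated prefactor $\frac{1}{2(2\widetilde k_1+\mu+\gamma)}$ is the consistency check that confirms the exponent arithmetic. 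I expect the main (though still routine) obstacle to be exactly this exponent and constant tracking, especially verifying that the $-1$ shift from the Jacobian cancels the $+1$ in the $\sin$-exponent of the measure and that no stray factor survives; once the powers of $\sin\phi$ and $\cos\phi$ are seen to match $2\mu$ and $2(\gamma-1)$ the result is immediate, which is why the authors regard the proof as omittable.
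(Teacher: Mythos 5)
Your proposal is correct and is exactly the argument the paper has in mind: the authors omit the proof precisely because it reduces, via the substitution $t=\cos 2\phi$, to the Jacobi orthogonality relation \eqref{or1} with parameters $(\mu,\gamma-1)$, and your exponent and constant bookkeeping (net powers $2\mu$ and $2(\gamma-1)$, overall factor $\tfrac{1}{4\cdot 2^{\mu+\gamma-1}}$) does reproduce the stated prefactor $\frac{1}{2(2\widetilde{k_1}+\mu+\gamma)}$. The completeness claim also goes through as you say, since the map $f\mapsto f/(\cos^{j}\phi\,\sin^{\ell/(\alpha+1)}\phi)$ is, up to a constant, an isometry onto $L^{2}\bigl([-1,1],(1-t)^{\mu}(1+t)^{\gamma-1}\,{\rm d}t\bigr)$ carrying $h_{\widetilde k}$ to $P_{\widetilde k}^{(\mu,\gamma-1)}$.
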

\begin{remark} Note that the interval $[0, \pi/2]$  is important. Indeed, the set $\{h_{\widetilde{k}}(\phi)\}_{\widetilde{k}=0}^{\infty}$  does not form a complete basis for  $L^{2}\left([-\pi/2,\pi/2],  \sin^{\frac{n-2}{\alpha+1}+1}\phi\,\cos^{m-1}\phi \,{\rm d}\phi\right)$.  A complete basis in this case where $\phi\in [-\pi/2,\pi/2]$ would be obtained by employing the so-called generalized Gegenbauer polynomials in \cite[\S 1.5]{dY}. This suggests that we should make a distinction between the cases listed in \eqref{jdq}.
\end{remark}

To construct a complete orthogonal basis for Grushin-harmonic polynomials, we introduce some notations.
Let \[\{Y_{\ell, p}(\omega_{1})\}_{p=1,2,\ldots, d_{\ell}(n)}\]  be  an orthonormal  basis of ordinary spherical harmonics of degree $\ell$ on $\mathbb{S}^{n-1}$. Here $d_{\ell}(n)$ is   the dimension of the space of  ordinary spherical harmonics of degree $\ell$ on $\mathbb{S}^{n-1}$, which is given by
\begin{eqnarray}\label{dh1}
d_{\ell}(n)=\frac{(n+2\ell-2)\Gamma(n+\ell-2)}{\Gamma(\ell+1)\Gamma(n-1)}.
\end{eqnarray}
By Stirling's formula for the Gamma function,  there exists a constant $C(n)$ only depending on $n$ such that
\begin{eqnarray}\label{a1} d_{\ell}(n)\le C(n)(\ell+1)^{n-2}.\end{eqnarray} The set $\{Y_{j, q}(\omega_{2})\}_{q=1,2,\ldots, d_{j}(m)}$ is defined similarly. 

We introduce the following space spanned by homogeneous Grushin-harmonics obtained in Theorem \ref{th1}. 
\begin{definition}
We define
\begin{align*}
    \mathcal{H}_{k}^{\alpha}&:={\rm span} \bigg\{\rho^{k}h_{k,\ell, j}(\phi)Y_{\ell, p}(\omega_{1})Y_{j, q}(\omega_{2})\bigg|1\le p\le d_{\ell}(n), 1\le q\le d_{j}(m), \\ & \qquad 0\le \ell, j\le k, \, {\rm and} \,\, \ell+(\alpha+1)j\equiv k\left({\rm mod}(2\alpha+2)\right)\bigg\},
\end{align*}
where $h_{k, \ell, j}$ is given in \eqref{g1}. 
\end{definition}
Consider the measure on
\[\Omega:=\partial B_{1}=\{(x,y)\in \mathbb{R}^{n+m}|\rho(x,y)=1\}\]
given by
\begin{eqnarray} \label{dm1}
{\rm d}\Omega:=\sin^{\frac{n-2}{\alpha+1}+1}\phi\,\cos^{m-1}\phi \,{\rm d}\phi\, {\rm d}\omega_{1}\,{\rm d}\omega_{2},
\end{eqnarray}
which is from Lemma \ref{hw1}.
It is worth noting that ${\rm d}\Omega$ is not the usual Lebesgue measure on $\Omega$. It has the angular function $\psi(\phi)$ in \eqref{ps1} as a weight. This sheds light on the reason of the presence of $\psi$ in the condition \eqref{pot1}.\\

Now, we have all the ingredients to state the main result of this subsection:
\begin{theorem}\label{m1} Let $n, m\ge 2$.
The following orthogonal decomposition holds
\begin{eqnarray*}L^{2}(\Omega, {\rm d}\Omega)=\mathop{\bigoplus}_{k=0}^{\infty}\mathcal{H}^{\alpha}_{k}.
\end{eqnarray*}
\end{theorem}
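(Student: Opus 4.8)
The plan is to treat the statement as a separation-of-variables/completeness argument whose only non-elementary input is Lemma~\ref{hw1}. Write $w(\phi) := \sin^{\frac{n-2}{\alpha+1}+1}\phi\,\cos^{m-1}\phi$, so that by \eqref{dm1} the Hilbert space factors as a product
\[
L^{2}(\Omega, {\rm d}\Omega) \cong L^{2}\big([0,\pi/2], w(\phi)\,{\rm d}\phi\big)\otimes L^{2}(\mathbb{S}^{n-1},{\rm d}\omega_{1})\otimes L^{2}(\mathbb{S}^{m-1},{\rm d}\omega_{2}),
\]
and the candidate basis consists of the products $\Phi_{\widetilde k,\ell,p,j,q} := h_{\widetilde k,\ell,j}(\phi)\,Y_{\ell,p}(\omega_{1})\,Y_{j,q}(\omega_{2})$, where $h_{\widetilde k,\ell,j}$ is the $\phi$-factor from \eqref{g1} (on $\Omega$ we have $\rho=1$, so the factor $\rho^{k}$ disappears). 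I would first record the bijection between index tuples and the grading: given $(\ell,j,\widetilde k)\in\mathbb{N}_{0}^{3}$, setting $k := \ell + (\alpha+1)j + 2(\alpha+1)\widetilde k$ reproduces \eqref{k1} together with the congruence $\ell+(\alpha+1)j\equiv k \pmod{2\alpha+2}$, and forces $0\le\ell,j\le k$; conversely each admissible $(k,\ell,j)$ yields a unique $\widetilde k\ge 0$. Hence the family $\{\Phi_{\widetilde k,\ell,p,j,q}\}$ runs exactly once through $\bigcup_{k}(\text{spanning set of }\mathcal{H}_{k}^{\alpha})$, and each $\Phi$ lies in a single $\mathcal{H}_{k}^{\alpha}$.

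Next I would establish pairwise orthogonality. For two such products, integrating first over $\mathbb{S}^{n-1}\times\mathbb{S}^{m-1}$ and using the orthonormality of the spherical harmonics $\{Y_{\ell,p}\}$ and $\{Y_{j,q}\}$ annihilates the pair unless the indices $(\ell,p)$ and $(j,q)$ coincide; when they do, the remaining $\phi$-integral against $w(\phi)\,{\rm d}\phi$ is precisely the orthogonality relation \eqref{n1} of Lemma~\ref{hw1}, which vanishes unless $\widetilde k$ coincides as well. In particular distinct $\mathcal{H}_{k}^{\alpha}$ are mutually orthogonal, so the right-hand side of the claimed identity is a genuine orthogonal sum inside $L^{2}(\Omega,{\rm d}\Omega)$.

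The substantive step is completeness, and the only subtlety is that the $\phi$-basis \eqref{f4} depends on $(\ell,j)$ through $\mu$ and $\gamma$, which blocks a naive tensor-product conclusion. I would argue directly: suppose $f\in L^{2}(\Omega,{\rm d}\Omega)$ is orthogonal to every $\Phi_{\widetilde k,\ell,p,j,q}$. For a.e.\ $\phi$ expand $f(\phi,\cdot,\cdot)$ in the complete orthonormal basis $\{Y_{\ell,p}(\omega_{1})Y_{j,q}(\omega_{2})\}$ of $L^{2}(\mathbb{S}^{n-1}\times\mathbb{S}^{m-1})$, obtaining coefficient functions
\[
c_{\ell,p,j,q}(\phi) := \int_{\mathbb{S}^{n-1}}\int_{\mathbb{S}^{m-1}} f(\phi,\omega_{1},\omega_{2})\,Y_{\ell,p}(\omega_{1})\,Y_{j,q}(\omega_{2})\,{\rm d}\omega_{1}\,{\rm d}\omega_{2},
\]
which by Fubini and Parseval lie in $L^{2}([0,\pi/2],w\,{\rm d}\phi)$. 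The hypothesis $\langle f,\Phi_{\widetilde k,\ell,p,j,q}\rangle=0$ then reads $\int_{0}^{\pi/2} c_{\ell,p,j,q}(\phi)\,h_{\widetilde k,\ell,j}(\phi)\,w(\phi)\,{\rm d}\phi=0$ for all $\widetilde k$; since $\{h_{\widetilde k,\ell,j}\}_{\widetilde k\ge 0}$ is complete in that weighted space by Lemma~\ref{hw1}, each $c_{\ell,p,j,q}$ vanishes a.e., whence $f=0$ in $L^{2}(\Omega,{\rm d}\Omega)$. Combining orthogonality with completeness and regrouping the basis by $k$ via the bijection above yields the asserted decomposition $L^{2}(\Omega,{\rm d}\Omega)=\bigoplus_{k\ge 0}\mathcal{H}_{k}^{\alpha}$. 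I expect the completeness step — the reduction to Lemma~\ref{hw1} through the spherical-harmonic expansion, and the verification that the coefficient functions sit in the correct weighted $L^{2}$ space — to be the main point requiring care; everything else is bookkeeping on the index set and the orthogonality relations already in hand.
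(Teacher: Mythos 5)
Your proposal is correct and follows essentially the same route as the paper: mutual orthogonality from the spherical-harmonic orthogonality together with the relation \eqref{n1}, and completeness by expanding an orthogonal $f$ in spherical harmonics and invoking the completeness statement of Lemma~\ref{hw1} for each fixed $(\ell,j)$. The paper only sketches this ("a standard discussion ... we will not repeat the details"), so your write-up is simply a fleshed-out version of the same argument, including the correct handling of the $(\ell,j)$-dependence of the $\phi$-basis.
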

\begin{proof}
Using the orthogonality  of the functions $h_{\widetilde{k}}(\phi)$ in \eqref{f4} and the orthogonality  of ordinary spherical harmonics of different degrees, we observe that the spaces $\mathcal{H}_{k}^{\alpha}$ are mutually orthogonal in $L^{2}(\Omega, {\rm d}\Omega)$. 
Thanks to  Lemma \ref{hw1} and the well-known  completeness of ordinary spherical harmonics in $L^{2}(\mathbb{S}^{n-1})$, the completeness of $\mathop{\bigoplus}_{k=0}^{\infty}\mathcal{H}_{k}^{\alpha}$  can be proven through a standard discussion. 
 In other words, we can show that if $f\in L^{2}(\Omega, {\rm d}\Omega)$ is orthogonal to each $\mathcal{H}_{k}^{\alpha}$, then $f=0$ a.e. on $\Omega$. We will not repeat the details.
\end{proof}

\begin{remark} \label{res1}
For the space $\mathbb{R}^{n+1}$, provided $\ell \equiv k({\rm mod}(\alpha+1))$, one polynomial solution of \eqref{e1}
is given by the Gegenbauer polynomial
\begin{eqnarray*}
w(t)=P^{\left(\frac{2\ell+n-2}{2(\alpha+1)},\frac{2\ell+n-2}{2(\alpha+1)}\right)}_{\frac{k-1}{\alpha+1}}(t)
\sim C^{\left(\frac{2\ell+n-2}{2(\alpha+1)}+\frac{1}{2}\right)}_\frac{k-\ell}{\alpha+1}
(t).\end{eqnarray*}
If we consider the following  measure on $\partial B_{1}$,
\begin{eqnarray*}
{\rm d}\Omega=\sin^{\frac{n-2}{\alpha+1}+1}\phi \,{\rm d}\phi\, {\rm d}\omega,
\end{eqnarray*}
where $\omega\in \mathbb{S}^{n-1}$, then an orthogonal basis of $\mathcal{H}^{\alpha}_{k}(\mathbb{R}^{n+1})$ is given by
\begin{eqnarray*}&&\left\{\rho^{k}\left(\sin\phi\right)^{\frac{\ell}{\alpha+1}} C^{\left(\frac{2\ell+n-2}{2(\alpha+1)}+\frac{1}{2}\right)}_\frac{k-\ell}{\alpha+1}
(\cos\phi)Y_{\ell, p}(\omega)\bigg|\right.\\ && \qquad  0\le p\le d_{\ell}(n), \ell \equiv k({\rm mod}(\alpha+1))\biggr\}_{k=0}^{\infty}, \end{eqnarray*}
where $\{Y_{\ell, p}\}_{p=1,2,\ldots d_{\ell}(n)}$ is an orthonormal basis of spherical harmonics of degree $\ell$ on $\mathbb{S}^{n-1}$. We recover the result in \cite[Theorem 1.2]{liu}. 
\end{remark}
 The proof of Theorem \ref{m1}  implies the following:

\begin{theorem} 
\label{decompharm} When $n, m\ge 3$,
the space $\mathcal{H}_{k}^{\alpha}$ decomposes under the action of ${\rm SO}(n)\times {\rm SO}(m)$ into irreducible and mutually orthogonal pieces as follows:
\begin{eqnarray*}
\mathcal{H}_{k}^{\alpha}=
\mathop{\bigoplus}_{\beta}h_{k,\ell, j}(\phi)\mathcal{H}_{\ell}(\mathbb{S}^{n-1})\otimes\mathcal{H}_{j}(\mathbb{S}^{m-1}),
\end{eqnarray*}
where $\mathcal{H}_{\ell}(\mathbb{S}^{n-1})$ is the space of ordinary spherical harmonics of degree $\ell$,  the condition
 $\beta$ means \[\ell+(\alpha+1)j\equiv k \,({\rm mod} (2\alpha+2)), \qquad 0\le \ell, j\le k,\]
and $h_{k, \ell, j}(\phi)$ is given in \eqref{g1}.

\end{theorem}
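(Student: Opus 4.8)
The plan is to read the claimed decomposition as a representation-theoretic reinterpretation of the construction of $\mathcal{H}_k^\alpha$ and of Theorem \ref{m1}, so that the genuinely new content is the irreducibility of each summand under $G := {\rm SO}(n)\times {\rm SO}(m)$. First I would fix the action of $G$ on functions on $\mathbb{R}^{n+m}$ by $((g_1,g_2)\cdot f)(x,y)=f(g_1^{-1}x,g_2^{-1}y)$. By \eqref{pc1} the gauge norm $\rho$ and the angle $\phi$ depend only on the radial quantities $r_1=|x|$ and $r_2=|y|$, which are $G$-invariant; hence the action fixes every scalar factor $\rho^{k}h_{k,\ell,j}(\phi)$ and operates solely on the angular variables $\omega_1\in\mathbb{S}^{n-1}$ and $\omega_2\in\mathbb{S}^{m-1}$. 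On those variables $G$ rotates $Y_{\ell,p}(\omega_1)$ inside $\mathcal{H}_\ell(\mathbb{S}^{n-1})$ and $Y_{j,q}(\omega_2)$ inside $\mathcal{H}_j(\mathbb{S}^{m-1})$, since rotations preserve the degree of a harmonic polynomial. Therefore each subspace $h_{k,\ell,j}(\phi)\,\mathcal{H}_\ell(\mathbb{S}^{n-1})\otimes\mathcal{H}_j(\mathbb{S}^{m-1})$ is $G$-invariant, and comparing with the definition of $\mathcal{H}_k^\alpha$ shows that $\mathcal{H}_k^\alpha$ is exactly the sum of these subspaces over all pairs $(\ell,j)$ satisfying condition $\beta$. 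The orthogonality of the summands and the fact that their sum is direct are already contained in the proof of Theorem \ref{m1}, coming from the orthogonality of ordinary spherical harmonics of distinct degrees (and, across different $k$, from \eqref{n1}); note that within a single $\mathcal{H}_k^\alpha$ distinct admissible pairs $(\ell,j)$ necessarily differ in $\ell$ or in $j$, so spherical-harmonic orthogonality alone separates them.

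It remains to identify each summand as an irreducible $G$-module, and this is where the hypothesis $n,m\ge 3$ enters decisively. For $n\ge 3$ the space $\mathcal{H}_\ell(\mathbb{S}^{n-1})$ of degree-$\ell$ spherical harmonics is an irreducible ${\rm SO}(n)$-module, and likewise $\mathcal{H}_j(\mathbb{S}^{m-1})$ is irreducible under ${\rm SO}(m)$ when $m\ge 3$; this is the classical decomposition of harmonic polynomials and it fails for ${\rm SO}(2)$, where the degree-$\ell$ harmonics split, which is precisely why $n=2$ or $m=2$ must be excluded. I would then invoke the standard fact that the outer tensor product of an irreducible representation of ${\rm SO}(n)$ with an irreducible representation of ${\rm SO}(m)$ is an irreducible representation of the direct product ${\rm SO}(n)\times{\rm SO}(m)$, provable via Schur's lemma applied to each factor or through the character orthogonality relations for compact groups. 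Since the fixed scalar factor $h_{k,\ell,j}(\phi)$ from \eqref{g1} is $G$-invariant, multiplication by it is a $G$-equivariant isomorphism from $\mathcal{H}_\ell(\mathbb{S}^{n-1})\otimes\mathcal{H}_j(\mathbb{S}^{m-1})$ onto the corresponding summand, so each piece is irreducible.

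I expect the only non-formal ingredient to be the irreducibility of the outer tensor product together with the sharp role of the condition $n,m\ge 3$; everything else is routine, resting on the $G$-invariance of $\rho$ and $\phi$ and on orthogonality relations already established. In writing this up I would state the tensor-product irreducibility as a cited standard result rather than reproving it, and I would explicitly flag the ${\rm SO}(2)$ failure so that the dimensional restriction in the statement is transparent.
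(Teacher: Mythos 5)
Your proposal is correct and follows essentially the intended route: the paper offers no separate argument for Theorem \ref{decompharm}, stating only that it is implied by the proof of Theorem \ref{m1}, and your write-up supplies exactly the details being alluded to (the ${\rm SO}(n)\times{\rm SO}(m)$-invariance of $\rho$ and $\phi$, the orthogonality already established in Theorem \ref{m1}, and the irreducibility of the outer tensor product $\mathcal{H}_{\ell}(\mathbb{S}^{n-1})\otimes\mathcal{H}_{j}(\mathbb{S}^{m-1})$ for $n,m\ge 3$). Your explicit flagging of the ${\rm SO}(2)$ failure is a useful clarification of why the dimensional restriction appears, but it does not change the substance of the argument.
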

\begin{remark} When $\alpha=0$, Theorem \ref{decompharm} reduces to the decomposition of  ordinary  spherical harmonics by Koornwinder in \cite[Theorem 4.2]{koo}.
\end{remark}





\subsection{The orthogonal projection and addition formula}\label{re1}

In this subsection, we study the orthogonal projection
\begin{eqnarray}\label{GH}
P_{k}: L^{2}(\Omega, {\rm d}\Omega)\rightarrow\mathcal{H}^{\alpha}_{k},
\end{eqnarray}
which is the projection from $L^{2}(\Omega, {\rm d}\Omega)$ onto the $(k+1)$-th eigenspace of $\Delta_{\sigma}$ defined in \eqref{r1}.

Recall that for $n\ge 2$, the reproducing kernel of the ordinary homogeneous spherical harmonic  polynomials of degree $k$ over $\mathbb{S}^{n-1}$ is given by
\begin{eqnarray} \label{orp}
K^{n}_{k}(\zeta_{1},\zeta_{2})=\sum_{j=1}^{d_{k}(n)}Y_{k, j}(\zeta_{1})\overline{Y_{k,j}(\zeta_{2})}=\frac{d_{k}(n)}{\left|\mathbb{S}^{n-1}\right|}\cdot\frac{C_{k}^{(\frac{n-2}{2})}(\cos \tau)}{C_{k}^{(\frac{n-2}{2})}(1)}.
\end{eqnarray}
Here $\tau$ is the angle between $\zeta_{1}$ and $\zeta_{2}$  on $\mathbb{S}^{n-1}$, $0\le \tau\le \pi$, $C_{k}^{(\alpha)}(x)$ is the Gegenbauer polynomial  in \eqref{gen} and $d_{k}(n)$ is the dimension given in  \eqref{dh1}. See e.g.\,\cite[Theorem 9.6.3]{aar} and \cite{dY}.

For convenience, we denote
\begin{equation}\label{bab}
    \begin{split}
      B_{n}^{\alpha,\beta}:=&\int_{0}^{\pi/2}(\sin \phi)^{2\alpha+1}(\cos \phi)^{2\beta+1}\left(P_{n}^{(\alpha,\beta)}(\cos 2\phi)\right)^{2}{\rm d}\phi
\\=&
\frac{1}{2(2n+\alpha+\beta+1)}\frac{\Gamma(n+\alpha+1)\Gamma(n+\beta+1)}{\Gamma(n+\alpha+\beta+1)n!},
    \end{split}
\end{equation}
which is the square of the $L^{2}$-norm of the Jacobi polynomial $P^{(\alpha,\beta)}_{n}(x)$ in trigonometric coordinates.

Now, let $\omega=(\omega_{1}, \omega_{2}), \eta=(\eta_{1},\eta_{2}) \in \mathbb{S}^{n-1}\times\mathbb{S}^{m-1}$ and
${\rm d}\omega:={\rm d}\omega_{1} {\rm d}\omega_{2}$. It is not hard to see from Theorem \ref{m1} that the following holds.


\begin{theorem}\label{c1}   The orthogonal projector operator
\[P_{k}: L^{2}(\Omega, {\rm d}\Omega)\rightarrow\mathcal{H}^{\alpha}_{k}\]
coincides with the integral operator
\begin{eqnarray} \label{int1}
P_{k}(g)(\phi, \omega)&=&\int_{0}^{\pi/2}\int_{\mathbb{S}^{n-1}}\int_{\mathbb{S}^{m-1}}G_{k}(\phi,\omega, \xi, \eta)g(\xi, \eta)\\&&\times\sin^{\frac{n-2}{\alpha+1}+1}\xi\,\cos^{m-1}\xi\, {\rm d}\xi\, {\rm d}\eta,\nonumber
\end{eqnarray}
where
\begin{equation}\label{af1}
    \begin{split}
       &G_{k}(\phi,\omega, \xi, \eta)\\=&\mathop{\sum_{\widetilde{k}\in\mathbb{N}_{0}}}_{0\le\ell,j\le k}b_{k,\ell,j}^{2}h_{k,\ell,j}(\phi)h_{k,\ell,j}(\xi)
K_{\ell}^{n}(\omega_{1},\eta_{1})K_{j}^{m}(\omega_{2},\eta_{2}),
    \end{split}
\end{equation}
in which $b_{k,\ell,j}$ is the normalization constant given by
\begin{eqnarray}\label{nc1}
b_{k,\ell,j}=\left(\frac{1}{B_{\widetilde{k}}^{\mu,\gamma-1}}\right)^{\frac{1}{2}}=
\left(\frac{2(2\widetilde{k}+\mu+\gamma)\Gamma(\widetilde{k}+\mu+\gamma)\Gamma(\widetilde{k}+1)}{\Gamma(\widetilde{k}+\mu+1)\Gamma(\widetilde{k}+\gamma)}\right)^{\frac{1}{2}}.
\end{eqnarray}
Here $h_{k, \ell, j}(\phi), \widetilde{k}, \mu, \gamma$ are defined in Theorem \ref{th1} and $K_{k}^{n}(\cdot, \cdot)$ is given in \eqref{orp}.
\end{theorem}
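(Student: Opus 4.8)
The plan is to realize $P_{k}$ through an explicit orthonormal basis of $\mathcal{H}_{k}^{\alpha}$ and then recognise the angular sums as the reproducing kernels $K_{\ell}^{n}$ and $K_{j}^{m}$ from \eqref{orp}. First I would normalise the building blocks. Comparing the integral in \eqref{n1} of Lemma \ref{hw1} with the quantity $B_{\widetilde{k}}^{\mu,\gamma-1}$ in \eqref{bab}---the substitution $\alpha\mapsto\mu$, $\beta\mapsto\gamma-1$, $n\mapsto\widetilde{k}$ turns the weight $(\sin\phi)^{2\mu+1}(\cos\phi)^{2\gamma-1}$ there into $h_{k,\ell,j}(\phi)^{2}$ times the $\phi$-weight of ${\rm d}\Omega$---one finds
\[
\int_{0}^{\pi/2} h_{k,\ell,j}(\phi)^{2}\,\sin^{\frac{n-2}{\alpha+1}+1}\phi\,\cos^{m-1}\phi\,{\rm d}\phi = B_{\widetilde{k}}^{\mu,\gamma-1},
\]
so that the constant $b_{k,\ell,j}=(B_{\widetilde{k}}^{\mu,\gamma-1})^{-1/2}$ of \eqref{nc1} is precisely the reciprocal $L^{2}$-norm of $h_{k,\ell,j}$. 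Using the product form of ${\rm d}\Omega$ in \eqref{dm1} together with the orthonormality of $\{Y_{\ell,p}(\omega_{1})\}$ on $\mathbb{S}^{n-1}$ and $\{Y_{j,q}(\omega_{2})\}$ on $\mathbb{S}^{m-1}$, the family
\[
\bigl\{\,b_{k,\ell,j}\,h_{k,\ell,j}(\phi)\,Y_{\ell,p}(\omega_{1})\,Y_{j,q}(\omega_{2})\,\bigr\},
\]
ranging over the admissible pairs $(\ell,j)$ with $\widetilde{k}\in\mathbb{N}_{0}$ (equivalently $\ell+(\alpha+1)j\equiv k\ ({\rm mod}\,2\alpha+2)$, $0\le\ell,j\le k$) and over $1\le p\le d_{\ell}(n)$, $1\le q\le d_{j}(m)$, is an orthonormal basis of $\mathcal{H}_{k}^{\alpha}$. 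This is a normalised restatement of the orthogonality already recorded in Theorem \ref{m1}.

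Next I would write $P_{k}g=\sum\langle g,e\rangle\,e$ over this basis, expand each coefficient $\langle g,e\rangle$ as an integral against ${\rm d}\Omega$, and interchange summation with integration. For a fixed $k$ only finitely many pairs $(\ell,j)$ are admissible and each $d_{\ell}(n),d_{j}(m)$ is finite, so the interchange is a finite linear manipulation requiring no analytic justification. The kernel that emerges is
\[
\sum_{\ell,j} b_{k,\ell,j}^{2}\,h_{k,\ell,j}(\phi)\,h_{k,\ell,j}(\xi)\Bigl(\sum_{p}Y_{\ell,p}(\omega_{1})\overline{Y_{\ell,p}(\eta_{1})}\Bigr)\Bigl(\sum_{q}Y_{j,q}(\omega_{2})\overline{Y_{j,q}(\eta_{2})}\Bigr),
\]
and collapsing the two inner angular sums by the reproducing-kernel identity \eqref{orp} replaces them by $K_{\ell}^{n}(\omega_{1},\eta_{1})$ and $K_{j}^{m}(\omega_{2},\eta_{2})$, producing exactly the expression \eqref{af1} for $G_{k}$ and hence the integral formula \eqref{int1}.

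No step here is a genuine obstacle; the argument is the standard passage from an orthonormal basis to the reproducing kernel of an eigenspace, which is why the statement can be asserted to follow readily from Theorem \ref{m1}. The only points demanding care are bookkeeping rather than substance: matching \eqref{n1} against \eqref{bab} to pin down $b_{k,\ell,j}$, and verifying that the three-fold weight of ${\rm d}\Omega$ distributes correctly across the $\phi$-, $\omega_{1}$- and $\omega_{2}$-factors so that the $L^{2}(\Omega,{\rm d}\Omega)$ inner product genuinely factorises as a product of a $\phi$-integral and two spherical integrals. It is also worth recording, to confirm that $G_{k}$ is well defined, that the inner angular sums are independent of the chosen orthonormal bases $\{Y_{\ell,p}\}$ and $\{Y_{j,q}\}$, since each equals the basis-free reproducing kernel of the corresponding space of ordinary spherical harmonics.
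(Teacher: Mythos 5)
Your proposal is correct and follows exactly the route the paper intends: the paper states Theorem \ref{c1} as an immediate consequence of Theorem \ref{m1} without writing out details, and your argument --- normalising $h_{k,\ell,j}$ via the match between \eqref{n1} and $B_{\widetilde{k}}^{\mu,\gamma-1}$, expanding $P_k$ over the resulting orthonormal basis, and collapsing the angular sums into the reproducing kernels $K_{\ell}^{n}$ and $K_{j}^{m}$ --- is precisely the standard passage being invoked. The bookkeeping you record (the $\sin\phi$ exponent $2\ell/(\alpha+1)+(n-2)/(\alpha+1)+1=2\mu+1$ and the $\cos\phi$ exponent $2j+m-1=2(\gamma-1)+1$) checks out, so nothing is missing.
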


When $\alpha=0$, the integral kernel $G_{k}(\phi,\omega, \xi, \eta)$ in \eqref{af1} reduces to the reproducing kernel $K^{n+m}_{k}(\cdot, \cdot)$ for the ordinary spherical harmonics of degree $k$ in \eqref{orp}. Thus in this case, we recover  the addition formula for Gegenbauer polynomials obtained first by  Koornwinder in \cite[Eq.\,(4.7)]{koo} using a group theoretical method  without obtaining explicit constants. In 1997, Xu provided a generalization of the formula and an analytic proof  in \cite[Eq.\,(2.5)]{Xu}. We adapt the notations to suit our purpose as follows.
\begin{theorem}\cite{Xu} \label{adfi}
For $u>1$ and $v> 1$,  the following addition formula holds
\begin{eqnarray}\label{af}
&&C_{k}^{\left(\frac{u+v}{2}-1\right)}(\cos \phi\cos \xi\cos\theta_{1}+\sin\phi\sin \xi\cos\theta_{2} )
\\&=&\sum_{i=0}^{\lfloor k/2\rfloor}\sum_{j+\ell=k-2i} c_{k,\ell,j}f_{k,\ell,j}(\cos 2\phi)f_{k,\ell,j}(\cos 2\xi)C_{j}^{(\frac{u}{2}-1)}(\cos\theta_{1})C_{\ell}^{(\frac{v}{2}-1)}(\cos\theta_{2}),\nonumber
\end{eqnarray}
where 
\begin{eqnarray*}f_{k,\ell,j}(\cos 2\theta)=\left(\frac{1}{B_{\frac{k-\ell-j}{2}}^{\frac{v}{2}-1+j,\frac{u}{2}-1+\ell}}\right)^{\frac{1}{2}}(\cos\theta)^{j}(\sin \theta)^{\ell}P_{\frac{k-\ell-j}{2}}^{(\frac{v}{2}-1+\ell, \frac{u}{2}-1+j)}(\cos 2\theta),\end{eqnarray*}
and
\begin{eqnarray*}c_{m,k,\ell}=\frac{(2j+u-2)(2\ell+v-2)\Gamma\left(\frac{u}{2}-1\right)\Gamma\left(\frac{v}{2}-1\right)}{4(2k+u+v-2)\Gamma\left(\frac{u+v}{2}-1\right)}.
\end{eqnarray*}
\end{theorem}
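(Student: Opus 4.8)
The plan is to read both sides of \eqref{af} as two evaluations of a single object, namely the reproducing kernel of the degree-$k$ spherical harmonics on $\mathbb{S}^{n+m-1}$, and to obtain it by specializing the Grushin machinery of Sections \ref{sec2}--\ref{31} to $\alpha=0$ before extending the resulting identity in the dimension parameters by analytic continuation. First I would set $\alpha=0$ throughout: then $\Delta_\alpha$ is the ordinary Laplacian on $\mathbb{R}^{n+m}$, the gauge norm $\rho$ is the Euclidean norm, $\Omega=\partial B_1$ is the unit sphere $\mathbb{S}^{n+m-1}$, and the weighted measure \eqref{dm1} collapses to $\sin^{n-1}\phi\,\cos^{m-1}\phi\,{\rm d}\phi\,{\rm d}\omega_1\,{\rm d}\omega_2$. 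By \eqref{pc1} this is exactly the Euclidean surface measure on $\mathbb{S}^{n+m-1}$ written in the bipolar coordinates $(x,y)=(\sin\phi\,\omega_1,\cos\phi\,\omega_2)$; hence $\mathcal{H}^0_k$ is the space of ordinary spherical harmonics of degree $k$ and the operator $P_k$ in \eqref{GH} is the orthogonal projection onto it.

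The core of the argument is to equate the two available formulas for the kernel of $P_k$. On the one hand, the classical zonal formula \eqref{orp} on $\mathbb{S}^{n+m-1}$ shows this kernel is a multiple of $C_k^{((n+m)/2-1)}(\omega\cdot\eta)$; a one-line computation in bipolar coordinates gives $\omega\cdot\eta=\sin\phi\sin\xi\cos\theta_x+\cos\phi\cos\xi\cos\theta_y$, where $\theta_x,\theta_y$ are the geodesic angles on the $\mathbb{S}^{n-1}$ and $\mathbb{S}^{m-1}$ factors. Relabeling $\theta_1=\theta_y$, $\theta_2=\theta_x$ and setting $u=m$, $v=n$, this is precisely the argument on the left of \eqref{af}. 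On the other hand, Theorem \ref{c1} writes the same kernel as the explicit sum $G_k$ of \eqref{af1}, with summands $b_{k,\ell,j}^2\,h_{k,\ell,j}(\phi)h_{k,\ell,j}(\xi)K_\ell^n(\omega_1,\eta_1)K_j^m(\omega_2,\eta_2)$. Inserting the zonal formula \eqref{orp} for the two inner kernels $K_\ell^n,K_j^m$ converts them into $C_\ell^{(n/2-1)}(\cos\theta_x)$ and $C_j^{(m/2-1)}(\cos\theta_y)$; meanwhile $h_{k,\ell,j}$ at $\alpha=0$ is already the Jacobi factor $f_{k,\ell,j}$, and the normalization constant $b_{k,\ell,j}^2$ from \eqref{nc1} together with the prefactors $d_\ell(n)/(|\mathbb{S}^{n-1}|\,C_\ell^{(n/2-1)}(1))$ assemble, via the Gamma identities \eqref{bab} and \eqref{dh1}, into the coefficient $c_{k,\ell,j}$. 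This establishes \eqref{af} for every pair of integers $n,m\ge 3$, i.e. on the dimension lattice $\{3,4,5,\dots\}^2$.

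The last and genuinely analytic step is to pass from integer dimensions to arbitrary real $u,v>1$. For fixed $k$ and fixed angular variables, the degree-$k$ truncation makes both sides finite sums; each summand is built from Gegenbauer polynomials $C_r^{(\lambda)}$, whose coefficients are polynomials in the order $\lambda$, and from ratios of Gamma functions that are holomorphic for $u,v>1$, so each side is a holomorphic function of $(u,v)$ on that domain. Since the two sides already agree on the lattice $\{3,4,5,\dots\}^2$, the identity theorem, applied one variable at a time, forces agreement for all real $u,v>1$. I expect this continuation to be the main obstacle: it is the only ingredient not delivered directly by Theorem \ref{c1}, and it requires checking that the explicit coefficients carry no poles in the half-planes $u>1$, $v>1$ and that the finite-sum structure makes each side manifestly holomorphic. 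By contrast, the constant bookkeeping of the previous step, although lengthy, is entirely mechanical once \eqref{nc1}, \eqref{bab} and \eqref{orp} are in hand. An alternative, following Xu, would bypass spherical harmonics and prove \eqref{af} directly from integral representations of Gegenbauer polynomials, but the route through Theorem \ref{c1} is the natural one in the present framework.
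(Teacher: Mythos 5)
The paper offers no proof of Theorem \ref{adfi}: it is imported verbatim from Xu \cite{Xu} (with Koornwinder's \cite{koo} as the integer-parameter ancestor), and the surrounding text only remarks, as a consistency check running in the opposite logical direction, that the $\alpha=0$ specialization of the kernel $G_k$ in \eqref{af1} recovers this formula. Your first two paragraphs formalize exactly that observation: since Theorem \ref{c1} and Lemma \ref{hw1} nowhere use Theorem \ref{adfi}, there is no circularity, and equating the zonal form \eqref{orp} of the degree-$k$ reproducing kernel on $\mathbb{S}^{n+m-1}$ with the bipolar expansion $G_k$ at $\alpha=0$ does yield \eqref{af} for integer $u=m$, $v=n$ with $n,m\ge 3$, after the (genuinely mechanical) bookkeeping with \eqref{nc1}, \eqref{bab} and \eqref{dh1}. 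Up to that point your argument is sound, and it is a legitimately different route from Xu's analytic proof, which works directly with product formulas and integral representations of Jacobi polynomials and never passes through spherical harmonics.

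The gap is in the final step. The identity theorem for holomorphic functions requires the set of agreement to have an accumulation point \emph{inside} the domain; the lattice $\{3,4,5,\dots\}$ accumulates only at infinity, so "holomorphic in $u$ and agreeing on $\{3,4,5,\dots\}$" does not force agreement for all $u>1$ --- $\sin(\pi z)$ vanishes at every integer without vanishing identically. As written, your continuation argument therefore proves nothing beyond the integer lattice. The repair is to upgrade "holomorphic" to "rational": for fixed $k$ and fixed angular variables both sides of \eqref{af} are finite sums whose Gegenbauer and Jacobi factors have coefficients polynomial in the parameters, and whose Gamma-function ratios (the $\Gamma(u/2-1)\Gamma(v/2-1)/\Gamma(\tfrac{u+v}{2}-1)$ from $c_{k,\ell,j}$ against the $\Gamma(i+\tfrac{v}{2}+j)\Gamma(i+\tfrac{u}{2}+\ell)/\Gamma(i+\tfrac{u+v}{2}-1+j+\ell)$ hidden in $1/B$) collapse into quotients of Pochhammer symbols, i.e.\ rational functions of $(u,v)$. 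A rational function of one variable vanishing at infinitely many points is identically zero, so fixing $v$ on the lattice and varying $u$, then varying $v$, does give the identity for all real $u,v>1$ away from the finitely many poles of the coefficients (which one then checks are removable, e.g.\ the apparent pole at $u=2$ for $j\ge 1$ being cancelled by $C_j^{(0)}\equiv 0$). Without replacing the appeal to the identity theorem by this rationality argument (or by Carlson's theorem with explicit growth bounds), your proof does not establish the statement for non-integer $u,v$.
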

\begin{remark}  The addition formula  \eqref{af} will be crucial to establish the $L^{1}-L^{\infty}$ estimates for the projector $P_{k}$ in \eqref{GH} in the next section.
\end{remark}
\subsection{The $\mathfrak{sl}_{2}$ triple and Fischer decomposition}\label{s21}
In this subsection, we further  investigate Grushin-harmonics and the orthogonal projection  \eqref{GH}. The interesting  $\mathfrak{sl}_{2}$  structure  allows us to generalize the classical harmonic analysis results which are governed by $\mathfrak{sl}_{2}$  to the present framework. We only point out some of them.

Let $\mathcal{P}$ be the real vector space of all polynomials on $\mathbb{R}^{n+m}$,  and let  $\mathcal{P}_{k}^{\alpha}$ be 
the subspace consisting of homogeneous polynomials of $\delta_{\lambda}$-degree $k$. The subsequent result has been implicitly used in  literature. Since we have been unable to find a proof,  we  provide one here.

\begin{theorem}\label{cs1} For any $k\in \mathbb{N}_{0}$, the mapping $f\rightarrow \Delta_{\alpha}f$ establishes a  surjective homomorphism from $\mathcal{P}_{k}^{\alpha}$ to $\mathcal{P}_{k-2}^{\alpha}$. {\rm (}The spaces $\mathcal{P}_{-1}^{\alpha}$ and $\mathcal{P}_{-2}^{\alpha}$ are assumed to be $\{0\}$.{\rm )}
\end{theorem}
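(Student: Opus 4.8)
The plan is to prove the statement by a Fischer-type duality argument, which fits naturally into the $\mathfrak{sl}_2$/Fischer-decomposition philosophy of this subsection. Linearity of $f\mapsto\Delta_\alpha f$ is immediate, and since $\Delta_\alpha$ is homogeneous of degree two with respect to $\{\delta_\lambda\}$ (see the Remark on homogeneity in Section~\ref{sec2}), it lowers the $\delta_\lambda$-degree by two and hence maps $\mathcal{P}_k^\alpha$ into $\mathcal{P}_{k-2}^\alpha$; the only real content is surjectivity. I would reduce surjectivity to an injectivity statement via the following standard device: equip the space $\mathcal{P}$ of polynomials with the Fischer (apolar) inner product determined by $\langle x^ay^b,x^{a'}y^{b'}\rangle=a!\,b!\,\delta_{a,a'}\delta_{b,b'}$. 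This pairing is positive definite, the monomials are orthogonal, and distinct $\delta_\lambda$-homogeneous components are mutually orthogonal; under it one has $M_{x_i}^{\ast}=\partial_{x_i}$ and, more generally, $M_p^{\ast}=p(\partial)$ for every polynomial $p$. Since $\mathcal{P}_k^\alpha$ and $\mathcal{P}_{k-2}^\alpha$ are finite dimensional, a linear map between them is surjective if and only if its adjoint is injective, so it suffices to identify and analyse the adjoint of $\Delta_\alpha$.

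First I would compute this adjoint. Writing $\Delta_\alpha=\Delta_x+M_{|x|^{2\alpha}}\Delta_y$ and using $(\Delta_x)^{\ast}=M_{|x|^2}$, $(\Delta_y)^{\ast}=M_{|y|^2}$ and $M_{|x|^{2\alpha}}^{\ast}=\Delta_x^\alpha$, together with the fact that operators in the $x$-variables commute with operators in the $y$-variables, gives
\[
\Delta_\alpha^{\ast}=M_{|x|^2}+|y|^2\,\Delta_x^\alpha .
\]
A quick degree check confirms consistency: $M_{|x|^2}$ raises the $\delta_\lambda$-degree by $2$, and $|y|^2\Delta_x^\alpha$ raises it by $2(\alpha+1)-2\alpha=2$, so $\Delta_\alpha^{\ast}$ maps $\mathcal{P}_{k-2}^\alpha$ into $\mathcal{P}_k^\alpha$ and is genuinely the adjoint of the restricted map. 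Thus the theorem follows once I show that $\Delta_\alpha^{\ast}$ is injective on $\mathcal{P}_{k-2}^\alpha$.

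The key step, and the one I expect to be the only genuine obstacle, is this injectivity, which I would establish by a filtration argument with respect to the degree in the $y$-variables. Decompose any $P\in\mathcal{P}_{k-2}^\alpha$ as $P=\sum_d P_d$, where $P_d$ is homogeneous of degree $d$ in $y$. The operator $M_{|x|^2}$ preserves the $y$-degree, whereas $|y|^2\Delta_x^\alpha$ strictly raises it by $2$. Hence, if $d_0$ denotes the smallest $y$-degree occurring in a nonzero $P$, the $y$-degree $d_0$ component of $\Delta_\alpha^{\ast}P$ equals exactly $M_{|x|^2}P_{d_0}=|x|^2 P_{d_0}$. If $\Delta_\alpha^{\ast}P=0$ this forces $|x|^2 P_{d_0}=0$; since multiplication by the nonzero polynomial $|x|^2$ is injective on the polynomial ring (an integral domain), we get $P_{d_0}=0$, contradicting the minimality of $d_0$ unless $P=0$. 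Therefore $\Delta_\alpha^{\ast}$ is injective, and by the duality above $\Delta_\alpha:\mathcal{P}_k^\alpha\to\mathcal{P}_{k-2}^\alpha$ is surjective.

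Finally, I would note that combining surjectivity with the rank--nullity theorem yields the decomposition $\mathcal{P}_k^\alpha=\mathcal{H}_k^\alpha\oplus\Delta_\alpha^{\ast}(\mathcal{P}_{k-2}^\alpha)$, where $\mathcal{H}_k^\alpha=\ker\Delta_\alpha\cap\mathcal{P}_k^\alpha$ is the space of homogeneous Grushin-harmonic polynomials, and in particular the dimension identity $\dim\mathcal{H}_k^\alpha=\dim\mathcal{P}_k^\alpha-\dim\mathcal{P}_{k-2}^\alpha$. The whole argument is elementary once the correct inner product is chosen; the subtlety worth flagging is precisely that no single multiplication operator is dual to $\Delta_\alpha$ (its adjoint is a sum of two terms of different bidegrees), which is why the $y$-degree filtration, rather than a naive ``multiplication by $|x|^2$'' Fischer decomposition, is what closes the proof.
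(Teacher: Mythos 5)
Your proof is correct, but it takes a genuinely different route from the paper. The paper argues constructively: after disposing of the range $k\le\alpha$ via the classical Laplacian, it exhibits an explicit preimage of each monomial $XY$ (with $Y$ of degree $j$ in $y$) as a telescoping sum $\sum_{\ell}(-1)^{\ell-1}X_{\ell}\Delta_{y}^{\ell-1}Y$, where the $X_{\ell}$ are produced by repeatedly invoking surjectivity of the ordinary $\Delta_{x}$ on polynomials in $x$, and the telescope closes because $\Delta_{y}$ is nilpotent on $Y$. You instead dualize: under the Fischer pairing the adjoint of $\Delta_{\alpha}=\Delta_{x}+|x|^{2\alpha}\Delta_{y}$ is $|x|^{2}+|y|^{2}\Delta_{x}^{\alpha}$ (your computation and degree check are right, and the identification of the restricted adjoint is legitimate because distinct $\delta_{\lambda}$-homogeneous components are Fischer-orthogonal), and injectivity of this adjoint follows from your $y$-degree filtration: the lowest $y$-degree component of the image is $|x|^{2}P_{d_{0}}$, and multiplication by $|x|^{2}$ is injective on the polynomial ring. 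This is airtight; note it uses $\alpha\in\mathbb{N}_{0}$ so that $|x|^{2\alpha}$ is a polynomial, which is exactly the standing assumption of this section. What each approach buys: the paper's argument is constructive (it hands you a preimage, and does not need any inner product), while yours is shorter, isolates the one nontrivial point (no single multiplication operator is Fischer-dual to $\Delta_{\alpha}$), and gives for free the orthogonal decomposition $\mathcal{P}_{k}^{\alpha}=\mathcal{H}_{k}^{\alpha}\oplus\bigl(|x|^{2}+|y|^{2}\Delta_{x}^{\alpha}\bigr)\mathcal{P}_{k-2}^{\alpha}$ together with the dimension identity $\dim\mathcal{H}_{k}^{\alpha}=\dim\mathcal{P}_{k}^{\alpha}-\dim\mathcal{P}_{k-2}^{\alpha}$, which the paper derives separately as a corollary.
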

\begin{proof} It is not hard to see that $\Delta_{\alpha}$ preserves  polynomials and is  homogeneous of $\delta_{\lambda}$-degree two. Thus it is sufficient to show that this mapping is surjective. 

This is  true when $k\le \alpha$, because now the polynomials in $\mathcal{P}_{k}^{\alpha}$ are only those linear combinations of monomials $x_{1}^{k_{1}}\cdots x_{n}^{k_{n}}$ with $k_{1}+\cdots+k_{n}=k$ and hence we can use the result for the classical Laplace operator. 

Now, let us consider the general monomial $XY$, where
\begin{equation*}
    X:=x_{1}^{\ell_{1}}\cdots x_{n}^{\ell_{n}}, \qquad Y:=y_{1}^{j_{1}}\cdots y_{m}^{j_{m}}
\end{equation*}
with $\ell=\ell_{1}+\cdots+\ell_{n}$ and $j=j_{1}+\cdots+j_{m}$. It is seen that $XY$ is of $\delta_{\lambda}$-degree $\ell+(\alpha+1)j$. 
Let $X_{1}$ be a polynomial (not unique) such that $\Delta_{x} X_{1}=X$,  and $X_{\ell}$ be the polynomial such that $\Delta_{x} X_{\ell}=|x|^{2\alpha}X_{\ell-1}$ for $\ell\ge 2$. The existence of such $X_{\ell}$ follow from the Euclidean results. 
Then, 
\begin{equation*}
\begin{split}
    &(\Delta_{x}+|x|^{2\alpha}\Delta_{y})\sum_{\ell=1}^{\lfloor \frac{j+4}{2}\rfloor}(-1)^{\ell-1} X_{\ell} \Delta_{y}^{\ell-1}Y \\
    =&\, XY+\sum_{\ell=2}^{\lfloor \frac{j+4}{2}\rfloor}(-1)^{\ell-1}|x|^{2\alpha}X_{\ell-1} \Delta_{y}^{\ell-1}Y
    +\sum_{\ell=1}^{\lfloor \frac{j+4}{2}\rfloor}(-1)^{\ell-1} |x|^{2\alpha}X_{\ell} \Delta_{y}^{\ell}Y\\
    =&\, XY,
\end{split}
\end{equation*}
where we have used $\Delta_{y}^{\lfloor \frac{j+4}{2}\rfloor}Y=0$ in the last step. Since every polynomial is a linear combination of monomials, we complete  the proof.
\end{proof}

Now we give some dimension discussions. 
\begin{lemma}\label{dqq}
    The dimension of $\mathcal{P}_{k}^{\alpha}(\mathbb{R}^{n+m})$, denoted by ${\dim}\,\mathcal{P}_{k}^{\alpha}(\mathbb{R}^{n+m}) $, satisfies
\begin{equation}\label{dq1}
    \frac{1}{(1-r^{\alpha+1})^{m}(1-r)^{n}}=\sum_{k=0}^{\infty}{\dim}\,\mathcal{P}_{k}^{\alpha}(\mathbb{R}^{n+m}) r^{k}, \qquad |r|<1.
\end{equation}
Explicitly, suppose $k=p(\alpha+1)+\ell$ for some positive integer $p$  and $\ell\in \{0,1,\cdots, \alpha\},$ then we have 
 \begin{equation}\label{dq2}
     {\dim}\,\mathcal{P}_{k}^{\alpha}(\mathbb{R}^{n+m})=\sum_{j=0}^{p}\binom{m+j-1}{m-1}\cdot\binom{n+k-j(\alpha+1)-1}{n-1}.
 \end{equation} 
\end{lemma}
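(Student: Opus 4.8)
The plan is to exhibit an explicit monomial basis for $\mathcal{P}_k^\alpha(\mathbb{R}^{n+m})$ and then count its elements, first through a generating-function identity and afterwards by extracting coefficients to obtain the closed form. First I would recall from \eqref{f1}--\eqref{f2} that a monomial $x_1^{\ell_1}\cdots x_n^{\ell_n}\,y_1^{j_1}\cdots y_m^{j_m}$ is homogeneous of $\delta_\lambda$-degree $|\ell| + (\alpha+1)|j|$, where $|\ell| = \ell_1 + \cdots + \ell_n$ and $|j| = j_1 + \cdots + j_m$; indeed each $x_i$ carries weight one and each $y_j$ carries weight $\alpha+1$ under the dilation $\delta_\lambda$. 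Consequently $\mathcal{P}_k^\alpha(\mathbb{R}^{n+m})$ has as a basis precisely those monomials for which $|\ell| + (\alpha+1)|j| = k$, so that $\dim\,\mathcal{P}_k^\alpha$ equals the number of pairs of multi-indices $(\ell, j)$ satisfying this weighted-degree constraint.

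To prove \eqref{dq1}, I would organise this count according to the separate $x$- and $y$-degrees. By the standard stars-and-bars argument the number of $x$-monomials of total degree $d$ is $\binom{n+d-1}{n-1}$ and the number of $y$-monomials of total degree $e$ is $\binom{m+e-1}{m-1}$, giving the generating functions
\begin{equation*}
\sum_{d=0}^\infty \binom{n+d-1}{n-1} r^d = \frac{1}{(1-r)^n}, \qquad \sum_{e=0}^\infty \binom{m+e-1}{m-1} r^{(\alpha+1)e} = \frac{1}{(1-r^{\alpha+1})^m},
\end{equation*}
where in the second series each $y$-variable contributes a factor $r^{\alpha+1}$ so as to record its $\delta_\lambda$-degree. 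Multiplying these two series, the coefficient of $r^k$ in the product collects exactly the pairs $(d,e)$ with $d + (\alpha+1)e = k$, which by the previous paragraph is $\dim\,\mathcal{P}_k^\alpha$; this is precisely the identity \eqref{dq1}.

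Finally, to obtain \eqref{dq2} I would extract this coefficient explicitly. Writing $j$ for the $y$-degree $e$, the constraint $d + (\alpha+1)j = k$ forces $d = k - (\alpha+1)j \ge 0$, so $j$ ranges over $0 \le j \le \lfloor k/(\alpha+1)\rfloor$; since $k = p(\alpha+1) + \ell$ with $0 \le \ell \le \alpha$ we have $\lfloor k/(\alpha+1)\rfloor = p$, and summing the product $\binom{m+j-1}{m-1}\binom{n+k-j(\alpha+1)-1}{n-1}$ over $0 \le j \le p$ yields \eqref{dq2}. The argument is elementary throughout; the only point demanding slight care is the bookkeeping of the weight $\alpha+1$ attached to the $y$-variables and the resulting floor computation, which is exactly what the splitting $k = p(\alpha+1)+\ell$ is designed to handle.
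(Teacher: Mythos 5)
Your proof is correct and follows essentially the same route as the paper: both arguments rest on the monomial basis with $x_i$ of $\delta_\lambda$-weight one and $y_j$ of weight $\alpha+1$, the stars-and-bars count $\dim\mathcal{P}_d(\mathbb{R}^n)=\binom{n+d-1}{n-1}$, and the generating function $\sum_d \binom{n+d-1}{n-1}r^d=(1-r)^{-n}$. The only (immaterial) difference is the order of derivation — the paper counts to get \eqref{dq2} first and then reads off \eqref{dq1}, whereas you multiply the generating functions to get \eqref{dq1} and then extract the coefficient of $r^k$ to obtain \eqref{dq2}.
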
 
\begin{proof} The formula \eqref{dq2}  was already appeared in \cite[Lemma 2.1]{liu}. It can be seen as follows. Since each $x_{i}, i=1,2, \ldots, n$ is homogeneous of $\delta_{\lambda}$-degree one and 
    $y_{j}, j=1,2, \ldots, m$ is homogeneous of $\delta_{\lambda}$-degree $\alpha+1$, thus a basis for 
    $\mathcal{P}_{k}^{\alpha}(\mathbb{R}^{n+m})$ is given by \begin{eqnarray*}\{p_{k_{1}}(x)q_{k_{2}}(y): p_{k_{1}}(x) \in \mathcal{P}_{k_{1}}(\mathbb{R}^{n}), q_{k_{2}}(y)\in \mathcal{P}_{k_{2}}(\mathbb{R}^{m}), k_{1}+k_{2}(\alpha+1)=k\}.
  \end{eqnarray*}
Using the well-known result for ordinary polynomials 
$
   {\rm dim}\mathcal{P}_{k}(\mathbb{R}^{n})=\binom{n+k-1}{n-1}, 
$ we are done.  
The formula \eqref{dq1} can be seen from \eqref{dq2} and the following well-known generating function 
\begin{equation*}
    \frac{1}{(1-r)^{n}}=\sum_{k=0}^{\infty}\binom{n+k-1}{n-1}r^{k}, \qquad |r|<1,
\end{equation*}
which generates the diagonals in Pascal's Triangle.
\end{proof}
By Theorem \ref{cs1} and Lemma \ref{dqq}, we have,
\begin{lemma} The dimension of $\mathcal{H}_{k}^{\alpha}(\mathbb{R}^{n+m})$, denoted by ${\dim}\,\mathcal{H}_{k}^{\alpha}(\mathbb{R}^{n+m}) $, satisfies 
 \begin{equation} \label{dq3}
        \frac{1-r^{2}}{(1-r^{\alpha+1})^{m}(1-r)^{n}}=\sum_{k=0}^{\infty}{\dim}\,\mathcal{H}_{k}^{\alpha}(\mathbb{R}^{n+m}) r^{k}, \qquad |r|<1.
 \end{equation}
  Explicitly, for $\alpha\neq 0$, let $k=p(\alpha+1)+\ell$ for some positive integer $p$  and $\ell\in \{0,1,\cdots, \alpha\},$  then we have
 \begin{equation*} \begin{split}
     {\dim}\,\mathcal{H}_{k}^{\alpha}(\mathbb{R}^{n+m})=
         &\sum_{j=0}^{p}\binom{m+j-1}{m-1}\cdot\binom{n+k-j(\alpha+1)-1}{n-1}\\
     &-\sum_{j=0}^{p}\binom{m+j-1}{m-1}\cdot\binom{n+k-j(\alpha+1)-3}{n-1},
     \end{split}
  \end{equation*}    
    when $\ell\ge 2$,
    and 
    \begin{equation*}
       \begin{split} {\dim}\,\mathcal{H}_{k}^{\alpha}(\mathbb{R}^{n+m})=&\sum_{j=0}^{p}\binom{m+j-1}{m-1}\cdot\binom{n+k-j(\alpha+1)-1}{n-1}\\
     &-\sum_{j=0}^{p-1}\binom{m+j-1}{m-1}\cdot\binom{n+k-j(\alpha+1)-3}{n-1},
     \end{split}  
    \end{equation*}
  when $\ell<2$.
\end{lemma}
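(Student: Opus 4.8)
The plan is to reduce everything to the short exact sequence furnished by Theorem \ref{cs1}. First I would observe that the space of all homogeneous Grushin-harmonic polynomials of $\delta_{\lambda}$-degree $k$ is exactly the kernel of the restricted map $\Delta_{\alpha}\colon \mathcal{P}_{k}^{\alpha}\to\mathcal{P}_{k-2}^{\alpha}$, and that this kernel coincides with $\mathcal{H}_{k}^{\alpha}$. The inclusion $\mathcal{H}_{k}^{\alpha}\subseteq\ker\Delta_{\alpha}$ is immediate from Theorem \ref{th1}. For the reverse inclusion I would invoke Theorem \ref{m1}: restricting to $\Omega=\partial B_{1}$, any $P\in\ker\Delta_{\alpha}\cap\mathcal{P}_{k}^{\alpha}$ gives, by the polar form \eqref{ps}, an eigenfunction of $\Delta_{\sigma}$ for the eigenvalue $-k(k+Q-2)$; since $k\mapsto k(k+Q-2)$ is strictly increasing for $k\ge 0$, the eigenspaces attached to distinct degrees are pairwise orthogonal, so $P|_{\Omega}$ lies in the $k$-th summand of the decomposition of Theorem \ref{m1}. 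As $P=\rho^{k}\,P|_{\Omega}$, this forces $P\in\mathcal{H}_{k}^{\alpha}$.

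Granting this identification, the rank--nullity theorem together with the surjectivity asserted in Theorem \ref{cs1} yields the single identity
\[
\dim\mathcal{H}_{k}^{\alpha}=\dim\mathcal{P}_{k}^{\alpha}-\dim\mathcal{P}_{k-2}^{\alpha},
\]
with the convention $\mathcal{P}_{-1}^{\alpha}=\mathcal{P}_{-2}^{\alpha}=\{0\}$. From here the generating function \eqref{dq3} is immediate: multiplying the generating function \eqref{dq1} for $\dim\mathcal{P}_{k}^{\alpha}$ by $(1-r^{2})$ produces $\sum_{k}(\dim\mathcal{P}_{k}^{\alpha}-\dim\mathcal{P}_{k-2}^{\alpha})\,r^{k}$, which equals $\sum_{k}\dim\mathcal{H}_{k}^{\alpha}\,r^{k}$, giving the claimed $(1-r^{2})/\big((1-r^{\alpha+1})^{m}(1-r)^{n}\big)$.

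For the explicit formulas I would feed the closed form \eqref{dq2} into the boxed identity above and track how the normal form $k=p(\alpha+1)+\ell$, $\ell\in\{0,\dots,\alpha\}$, changes when passing to $k-2$. If $\ell\ge 2$, then $k-2=p(\alpha+1)+(\ell-2)$ is already in normal form with the same $p$, so by \eqref{dq2} both $\dim\mathcal{P}_{k}^{\alpha}$ and $\dim\mathcal{P}_{k-2}^{\alpha}$ are sums over the same range $0\le j\le p$, and using $\binom{n+(k-2)-j(\alpha+1)-1}{n-1}=\binom{n+k-j(\alpha+1)-3}{n-1}$ one lands on the first stated formula. If instead $\ell\in\{0,1\}$, then $\ell-2<0$ and I must borrow one unit of $\alpha+1$, writing $k-2=(p-1)(\alpha+1)+(\alpha+\ell-1)$ with $\alpha+\ell-1\in\{0,\dots,\alpha\}$ (valid since $\alpha\ge 1$ and $p\ge 1$); now the sum defining $\dim\mathcal{P}_{k-2}^{\alpha}$ runs only over $0\le j\le p-1$, which produces the second stated formula.

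The only genuinely nontrivial point is the kernel identification in the first step; everything thereafter is linear algebra and the elementary index bookkeeping of the two cases $\ell\ge 2$ and $\ell<2$, which is where I expect the sole (mild) source of error to lie.
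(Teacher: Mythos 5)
Your proposal is correct and follows essentially the same route as the paper: the paper's proof simply applies Theorem \ref{cs1} to get $\dim\mathcal{H}_{k}^{\alpha}=\dim\mathcal{P}_{k}^{\alpha}-\dim\mathcal{P}_{k-2}^{\alpha}$ and then combines this with \eqref{dq1} and \eqref{dq2}, exactly as you do. Your additional care in identifying $\mathcal{H}_{k}^{\alpha}$ with $\ker\bigl(\Delta_{\alpha}|_{\mathcal{P}_{k}^{\alpha}}\bigr)$ via Theorem \ref{m1} fills in a step the paper leaves implicit, and your case analysis for $\ell\ge 2$ versus $\ell<2$ matches the stated formulas.
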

\begin{proof} Using Theorem \ref{cs1}, we have 
    \begin{equation*}
        {\dim}\,\mathcal{H}_{k}^{\alpha}(\mathbb{R}^{n+m})={\dim}\,\mathcal{P}_{k}^{\alpha}(\mathbb{R}^{n+m})-{\dim}\,\mathcal{P}_{k-2}^{\alpha}(\mathbb{R}^{n+m}).
    \end{equation*}
   Together with  \eqref{dq1} and \eqref{dq2}, we obtain the explicit formulas.
\end{proof}
\begin{remark} Rewriting the left-hand side of \eqref{dq3} as 
\begin{equation*}
     \frac{1-r^{2}}{(1-r^{\alpha+1})^{m}(1-r)^{n}}= \frac{1}{1-r^{2(\alpha+1)}}\cdot \frac{1-r^{2(\alpha+1)}}{(1-r^{\alpha+1})^{m}} \cdot \frac{1-r^{2}}{(1-r)^{n}},
\end{equation*}
and then by the well-known relation for ordinary spherical harmonics (see \cite{mu}), i.e.
 \begin{equation*} 
        \frac{1-r^{2}}{(1-r)^{n}}=\sum_{k=0}^{\infty}{\dim}\,\mathcal{H}_{k}(\mathbb{R}^{n}) r^{k}, \qquad |r|<1,
 \end{equation*}
    we have 
    \begin{equation*}
        {\dim}\,\mathcal{H}_{k}^{\alpha}(\mathbb{R}^{n+m})=\sum_{\beta} {\dim}\,\mathcal{H}_{\ell}(\mathbb{R}^{n}) \cdot {\dim}\,\mathcal{H}_{j}(\mathbb{R}^{m}), 
    \end{equation*}
    where $\beta$ stands for $\ell+(\alpha+1)j\equiv k({\rm mod}(2\alpha+2)$, $0\le \ell, j\le k$ and ${\dim}\,\mathcal{H}_{\ell}(\mathbb{R}^{n})$ is the dimension of ordinary harmonic polynomials of degree $\ell$ on $\mathbb{R}^{n}$ given in \eqref{dh1}. This again implies that the orthogonal basis constructed in Section \ref{shg} is complete.
\end{remark}

 Theorems \ref{m1} and \ref{cs1} together  imply that the  classical Fischer decomposition, a fundamental tool in classical harmonic analysis and complex analysis (see e.g. \cite{dY}), can be extended to the present Grushin setting with $\alpha\in \mathbb{N}_{0}$.
\begin{theorem}[Fischer decomposition] \label{fis} The following orthogonal decomposition holds
\begin{eqnarray*}
\mathcal{P}_{k}^{\alpha}(\mathbb{R}^{n+m})=\mathop{\bigoplus}_{p=0}^{\lfloor \frac{k}{2} \rfloor}\rho^{2p}\,\mathcal{H}^{\alpha}_{k-2p}(\mathbb{R}^{n+m}).
\end{eqnarray*}  
\end{theorem}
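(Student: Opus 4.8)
The plan is to argue by induction on $k$, peeling off one Grushin-harmonic layer at a time; the two engines are the surjectivity of $\Delta_\alpha$ from Theorem \ref{cs1} together with the dimension bookkeeping in \eqref{dq1}, \eqref{dq3}, and the $L^2(\Omega,{\rm d}\Omega)$-orthogonality from Theorem \ref{m1}. As a preliminary step I would identify $\mathcal{H}_k^\alpha$ with the full kernel of $\Delta_\alpha$ acting on $\mathcal{P}_k^\alpha$. The inclusion $\mathcal{H}_k^\alpha\subseteq\ker(\Delta_\alpha|_{\mathcal{P}_k^\alpha})$ is immediate, since the spanning polynomials \eqref{g1} are Grushin-harmonic by Theorem \ref{th1}. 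For the reverse inclusion I compare dimensions: Theorem \ref{cs1} makes $\Delta_\alpha\colon\mathcal{P}_k^\alpha\to\mathcal{P}_{k-2}^\alpha$ surjective, so rank--nullity gives $\dim\ker=\dim\mathcal{P}_k^\alpha-\dim\mathcal{P}_{k-2}^\alpha$, while the generating functions \eqref{dq1} and \eqref{dq3} are related by the factor $1-r^2$, yielding $\dim\mathcal{H}_k^\alpha=\dim\mathcal{P}_k^\alpha-\dim\mathcal{P}_{k-2}^\alpha$. Hence $\mathcal{H}_k^\alpha=\ker(\Delta_\alpha|_{\mathcal{P}_k^\alpha})$, and in particular $\dim\mathcal{P}_k^\alpha=\sum_{p=0}^{\lfloor k/2\rfloor}\dim\mathcal{H}_{k-2p}^\alpha$.

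The induction starts from $\mathcal{P}_0^\alpha=\mathcal{H}_0^\alpha$ and $\mathcal{P}_1^\alpha=\mathcal{H}_1^\alpha$ (polynomials of $\delta_\lambda$-degree $\le 1$ are automatically annihilated by $\Delta_\alpha$). For the inductive step, assuming the decomposition for $\mathcal{P}_{k-2}^\alpha$, it suffices to establish the single identity $\mathcal{P}_k^\alpha=\mathcal{H}_k^\alpha\oplus\rho^2\mathcal{P}_{k-2}^\alpha$, since multiplying the inductive identity by $\rho^2$ converts $\rho^2\mathcal{P}_{k-2}^\alpha$ into $\bigoplus_{p=1}^{\lfloor k/2\rfloor}\rho^{2p}\mathcal{H}_{k-2p}^\alpha$. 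As the dimensions already match by the preceding paragraph (and multiplication by $\rho^2$ is injective on each homogeneous piece), the whole content is the transversality $\mathcal{H}_k^\alpha\cap\rho^2\mathcal{P}_{k-2}^\alpha=\{0\}$. To prove it, suppose $H=\rho^2 g$ with $H\in\mathcal{H}_k^\alpha$ and $g\in\mathcal{P}_{k-2}^\alpha$, and restrict to $\Omega=\partial B_1$, on which $\rho\equiv1$, so that $H|_\Omega=g|_\Omega$. By the inductive hypothesis $g|_\Omega$ lies in the span of the spaces $\mathcal{H}_j^\alpha|_\Omega$ with $0\le j<k$ and $k-j$ even, all of which are orthogonal in $L^2(\Omega,{\rm d}\Omega)$ to $\mathcal{H}_k^\alpha|_\Omega$ by Theorem \ref{m1}; since $H|_\Omega\in\mathcal{H}_k^\alpha|_\Omega$, we conclude $H|_\Omega=0$. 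Because $H$ is $\delta_\lambda$-homogeneous of degree $k$ and the restriction map $\mathcal{P}_k^\alpha\to L^2(\Omega)$ is injective (a homogeneous function vanishing on $\Omega$ vanishes identically), this forces $H=0$, closing the induction.

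The step I expect to be the main obstacle is making the raising operation genuinely compatible with the polynomial structure: one must ensure that multiplication by the degree-two radial factor carries $\mathcal{P}_{k-2}^\alpha$ back into $\mathcal{P}_k^\alpha$, where the interplay between the non-polynomial gauge norm $\rho$ in \eqref{gn} and the $\delta_\lambda$-grading is delicate. This is precisely where the conceptual $\mathfrak{sl}_2$ picture is needed to organize the ladder: $\Delta_\alpha$ as the lowering operator, the degree-two radial multiplication as the raising operator (after factoring out the angle function $\psi$ appearing in \eqref{ps}), and the Euler operator $\mathbb{E}_\alpha$ of \eqref{eu1} as the grading element. Once this ladder is in place, everything else reduces to the two inputs already secured, namely the surjectivity of Theorem \ref{cs1} and the orthogonal spherical decomposition of Theorem \ref{m1}.
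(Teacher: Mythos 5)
Your scheme --- identify $\mathcal{H}_k^\alpha$ with $\ker\left(\Delta_\alpha|_{\mathcal{P}_k^\alpha}\right)$ via the surjectivity of Theorem \ref{cs1} and the generating functions \eqref{dq1} and \eqref{dq3}, then induct on $k$ through a dimension count combined with the transversality $\mathcal{H}_k^\alpha\cap\rho^2\mathcal{P}_{k-2}^\alpha=\{0\}$ extracted from the $L^2(\Omega,{\rm d}\Omega)$-orthogonality of Theorem \ref{m1} --- is precisely the route the paper indicates (it offers no argument beyond the sentence that Theorems \ref{m1} and \ref{cs1} ``together imply'' the decomposition), and both your kernel identification and your transversality step are sound as far as they go (the latter only where Theorem \ref{m1} is available, i.e.\ $n,m\ge2$). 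The genuine gap is the step you yourself flag as ``the main obstacle'' and then leave unproved: the containment $\rho^{2p}\mathcal{H}_{k-2p}^\alpha\subseteq\mathcal{P}_k^\alpha$. Without it, your dimension count only shows that $\mathcal{H}_k^\alpha\oplus\rho^2\mathcal{P}_{k-2}^\alpha$ is \emph{some} space of $\delta_\lambda$-homogeneous functions of degree $k$ with the same dimension as $\mathcal{P}_k^\alpha$; it does not show the two spaces coincide. And the containment is in fact false for $\alpha\ge1$: by \eqref{gn}, $\rho^2=\left(|x|^{2(\alpha+1)}+(\alpha+1)^2|y|^2\right)^{1/(\alpha+1)}$ is not a polynomial, so already $\rho^2\cdot 1\in\rho^2\mathcal{H}_0^\alpha$ lies outside $\mathcal{P}_2^\alpha$. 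The substantive inclusion fails as well: for $n=m=2$, $\alpha=1$, $k=2$ one has $\mathcal{P}_2^1={\rm span}\{x_1^2,x_1x_2,x_2^2,y_1,y_2\}$ and $\mathcal{H}_2^1={\rm span}\{x_1^2-x_2^2,\,x_1x_2,\,y_1,\,y_2\}$, and if $|x|^2=u+c\rho^2$ with $u\in\mathcal{H}_2^1$, then $c\rho^2=|x|^2-u$ would be a polynomial, forcing $c=0$ and hence $\Delta_1|x|^2=0$, whereas $\Delta_1|x|^2=4$.

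Appealing to the $\mathfrak{sl}_2$ triple of Theorem \ref{sl2} cannot repair this: the raising operator there is multiplication by the same non-polynomial $\rho^2$, and the lowering operator $\mathcal{L}_\alpha=(\rho/|x|)^{2\alpha}\Delta_\alpha$ does not preserve $\mathcal{P}$ either, so the lowest-weight module generated by a Grushin-harmonic leaves the polynomial algebra after one application of $\rho^2$. The conclusion to draw is not that your induction needs a cleverer twist but that the statement itself (and the decomposition \eqref{fs1} built on it) requires reformulation before any proof can close --- for instance by enlarging $\mathcal{P}_k^\alpha$ to a space of $\delta_\lambda$-homogeneous degree-$k$ functions that is stable under multiplication by $\rho^2$, in which case the claim reduces, after restriction to $\Omega$ where $\rho\equiv1$, to the completeness and orthogonality already contained in Theorem \ref{m1}, and your kernel and transversality lemmas carry over unchanged.
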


Obviously, when $\alpha=0$, this reduces to the classical result. On the other side, the classical Fischer decomposition is well understood by employing the Howe dual pair $({\rm SO}(n+m), \mathfrak{sl}_{2})$, see e.g.\,\cite{Howe}. We show that the decomposition in Theorem \ref{cs1} is  again governed  by a  $\mathfrak{sl}_{2}$ triple. This will be achieved  by introducing  the following  perturbed  operator which maps functions of $\rho$ into functions of $\rho$. Meanwhile, the Grushin-harmonic  polynomials are  included in the kernel of this new operator.
\begin{definition} The perturbed Grushin operator $\mathcal{L}_{\alpha}$ is given by
\begin{eqnarray*}\mathcal{L}_{\alpha}&:=&\left(\frac{\rho}{|x|}\right)^{2\alpha}\Delta_{\alpha}=\rho^{2\alpha}\left(|x|^{-2\alpha}\Delta_{x}+\Delta_{y}\right)\\
&=&\frac{\partial^{2}}{\partial\rho^{2}}+\frac{Q-1}{\rho}\frac{\partial}{\partial\rho}
+\frac{1}{\rho^{2}}\Delta_{\sigma}.
\end{eqnarray*}
\end{definition}

A straightforward calculation yields the following,
\begin{theorem} \label{sl2} The operators $\mathcal{L}_{\alpha}$, $\rho^{2}$ and $\mathbb{E}_{\alpha}+Q/2$ form an $\mathfrak{sl}_{2}$ triple for any $\alpha\in\mathbb{N}_{0}$. Namely, the following commutations hold
 \begin{eqnarray*}
[\mathcal{L}_{\alpha}, \rho^{2}]&=&4\left(\mathbb{E}_{\alpha}+Q/2\right),\\
\left[\mathcal{L}_{\alpha}, \mathbb{E}_{\alpha}+Q/2\right]&=&2\mathcal{L}_{\alpha},\\
\left[\rho^{2}, \mathbb{E}_{\alpha}+Q/2\right]&=&-2\rho^{2},
\end{eqnarray*}
where $\mathbb{E}_{\alpha}$ is the analogue of classical Euler operator defined in \eqref{eu1}.
\begin{proof} We only  verify  the first relation in Cartesian coordinates. Direct calculation yields 
\begin{equation*}
    \begin{split}
       & \partial_{x_{i}}\rho^{2}=2\rho^{-2\alpha}|x|^{2\alpha}x_{i},\qquad  \qquad 1\le i\le n, \\
      & \partial_{y_{j}}\rho^{2}=2(\alpha+1)\rho^{-2\alpha}y_{j},\quad \qquad 1\le j\le m,\\
    &\partial_{x_{i}}^{2}\rho^{2}=-4\alpha\rho^{4\alpha-2}|x|^{4\alpha}x_{i}^{2}+4\alpha\rho^{-2\alpha}|x|^{2\alpha-2}x_{i}^{2}
    +2\rho^{-2\alpha}|x|^{2\alpha},\\
    &\partial_{y_{j}}^{2}\rho^{2}=-4\alpha(\alpha+1)^{2}\rho^{-4\alpha-2}y_{j}^{2}+2(\alpha+1)\rho^{-2\alpha}.
    \end{split}
\end{equation*}
It follows that
\begin{eqnarray*} 
        [\mathcal{L}_{\alpha}, \rho^{2}]&=& \rho^{2\alpha}\left[|x|^{-2\alpha}\Delta_{x}+\Delta_{y}, \rho^{2}\right]\\
        &=& \rho^{2\alpha}\left[|x|^{-2\alpha}\Delta_{x}, \rho^{2}\right]+\rho^{2\alpha}\left[\Delta_{y}, \rho^{2}\right]\\
        &=& 4\sum_{i=1}^{n}x_{i}\partial_{x_{i}}+2n+4\alpha-4\alpha\frac{|x|^{2(\alpha+1)}}{\rho^{2(\alpha+1)}}\\
        &&+4(\alpha+1)\sum_{j=1}^{m}y_{j}\partial_{y_{j}}+2m(\alpha+1)-\frac{4\alpha(\alpha+1)^{2}|y|^{2}}{\rho^{2(\alpha+1)}}\\
        &=& 4(\mathbb{E}_{\alpha}+Q/2).
\end{eqnarray*} 
These relations can also be seen by   using the polar coordinates.  
\end{proof}

\end{theorem}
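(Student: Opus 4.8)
The plan is to pass to the polar coordinates \eqref{pc}, where all three commutation relations collapse to one-dimensional identities in the radial variable $\rho$ and hence become completely transparent. The two structural facts I would invoke at the outset are: (i) the dilation generator acts as $\mathbb{E}_{\alpha}=\rho\,\partial_{\rho}$ in these coordinates, because $\rho$ is $\delta_{\lambda}$-homogeneous of degree one while the angular variables $\phi,\omega_{1},\omega_{2}$ are $\delta_{\lambda}$-invariant (this is exactly the homogeneity discussion of Section~\ref{sec2}); and (ii) by the definition of $\mathcal{L}_{\alpha}$ together with \eqref{ps}, the perturbed operator has the clean form
\[
\mathcal{L}_{\alpha}=\partial_{\rho}^{2}+\frac{Q-1}{\rho}\,\partial_{\rho}+\frac{1}{\rho^{2}}\Delta_{\sigma},
\]
where $\Delta_{\sigma}$ in \eqref{r1} involves neither $\rho$ nor $\partial_{\rho}$. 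Consequently $\Delta_{\sigma}$ commutes with every purely radial operator, in particular with $\rho^{2}$, $\partial_{\rho}$ and $\rho\,\partial_{\rho}$, so the whole computation reduces to commutators of the elementary radial building blocks.

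I would first dispose of the third relation, which is immediate: writing $\mathbb{E}_{\alpha}+Q/2=\rho\,\partial_{\rho}+Q/2$ and checking $[\rho^{2},\rho\,\partial_{\rho}]=-2\rho^{2}$ by a one-line product-rule computation gives $[\rho^{2},\mathbb{E}_{\alpha}+Q/2]=-2\rho^{2}$. For the second relation I would record the radial identities $[\partial_{\rho},\rho\,\partial_{\rho}]=\partial_{\rho}$ (whence $[\partial_{\rho}^{2},\rho\,\partial_{\rho}]=2\partial_{\rho}^{2}$), $[\tfrac{1}{\rho}\partial_{\rho},\rho\,\partial_{\rho}]=\tfrac{2}{\rho}\partial_{\rho}$, and $[\tfrac{1}{\rho^{2}},\rho\,\partial_{\rho}]=\tfrac{2}{\rho^{2}}$, each obtained by applying both sides to a test function $f(\rho)$. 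Summing them (the last one carrying $\Delta_{\sigma}$, which passes through freely) yields
\[
[\mathcal{L}_{\alpha},\mathbb{E}_{\alpha}+Q/2]=2\partial_{\rho}^{2}+\frac{2(Q-1)}{\rho}\,\partial_{\rho}+\frac{2}{\rho^{2}}\Delta_{\sigma}=2\mathcal{L}_{\alpha}.
\]

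For the first relation I would use $[\partial_{\rho},\rho^{2}]=2\rho$, which gives $[\partial_{\rho}^{2},\rho^{2}]=4\rho\,\partial_{\rho}+2$ and $[\tfrac{Q-1}{\rho}\partial_{\rho},\rho^{2}]=2(Q-1)$, while $[\tfrac{1}{\rho^{2}}\Delta_{\sigma},\rho^{2}]=0$ since both $\tfrac{1}{\rho^{2}}$ and $\Delta_{\sigma}$ commute with $\rho^{2}$. Adding these gives $[\mathcal{L}_{\alpha},\rho^{2}]=4\rho\,\partial_{\rho}+2Q=4(\mathbb{E}_{\alpha}+Q/2)$, recovering the Cartesian verification in a more economical and uniform way.

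The proof carries no genuine conceptual obstacle; its content is the recognition that $\mathcal{L}_{\alpha}$ is the \emph{exact} radial Laplacian of homogeneous dimension $Q$, so that $(\mathcal{L}_{\alpha},\rho^{2},\mathbb{E}_{\alpha}+Q/2)$ is nothing but the standard $\mathfrak{sl}_{2}$ triple attached to any operator of the shape $\partial_{r}^{2}+\frac{N-1}{r}\partial_{r}+\frac{1}{r^{2}}L$ with $r$-independent $L$ and $N=Q$. The only points demanding care are the two structural facts (i) and (ii) above; in particular, isolating the bare coefficient $1/\rho^{2}$ in $\mathcal{L}_{\alpha}$ — rather than the angular weight $\psi(\phi)$ that disfigures $\Delta_{\alpha}$ itself — is precisely what makes the \emph{perturbed} operator, and not $\Delta_{\alpha}$, the correct lowering element of the triple.
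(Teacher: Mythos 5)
Your proof is correct, and it takes the route the paper only gestures at in its closing sentence (``These relations can also be seen by using the polar coordinates'') rather than the one it actually writes out. The paper verifies only the first commutator, and does so in Cartesian coordinates by differentiating $\rho^{2}$ directly, which produces several terms that must recombine into $4(\mathbb{E}_{\alpha}+Q/2)$; the second and third relations are left unverified. You instead use the polar expression $\mathcal{L}_{\alpha}=\partial_{\rho}^{2}+\frac{Q-1}{\rho}\partial_{\rho}+\frac{1}{\rho^{2}}\Delta_{\sigma}$ (which is exactly the paper's own definition of $\mathcal{L}_{\alpha}$ in polar form, derived from \eqref{ps}) together with the identification $\mathbb{E}_{\alpha}=\rho\,\partial_{\rho}$, which is legitimately justified by the fact that $\mathbb{E}_{\alpha}$ generates the dilations $\delta_{\lambda}$, that $\rho$ is $\delta_{\lambda}$-homogeneous of degree one, and that $\phi,\omega_{1},\omega_{2}$ are $\delta_{\lambda}$-invariant. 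I checked all the elementary radial commutators you list ($[\partial_{\rho}^{2},\rho^{2}]=4\rho\partial_{\rho}+2$, $[\tfrac{Q-1}{\rho}\partial_{\rho},\rho^{2}]=2(Q-1)$, $[\partial_{\rho}^{2},\rho\partial_{\rho}]=2\partial_{\rho}^{2}$, etc.) and they are all right, so all three relations come out correctly and uniformly. What your approach buys is transparency and completeness: it exposes $\mathcal{L}_{\alpha}$ as the standard radial Laplacian of homogeneous dimension $Q$ and disposes of all three relations at once, at the cost of leaning on the prior polar-coordinate computation \eqref{ps} and on the dilation-generator identification; the paper's Cartesian computation is self-contained but heavier and incomplete as written. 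One cosmetic caveat: the polar coordinates are singular on the degeneracy set $\{x=0\}$, so strictly speaking your verification holds on the dense open set where they are valid, which suffices here since $\mathcal{L}_{\alpha}$ itself carries the singular coefficient $|x|^{-2\alpha}$ and the identities are between operators with coefficients analytic off that set.
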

\begin{remark} (1) The operator $\mathcal{L}_{\alpha}$ can be further deformed as $\rho^{2-b}\mathcal{L}_{\alpha}$ with $b>0$. Together with $\rho^{b}$ and $\mathbb{E}_{\alpha}+c$, they will give a  new family of realization of $\mathfrak{sl}_{2}$. In particular when $b=2+2\alpha$, $\rho^{2-b}\mathcal{L}_{\alpha}$ reduces to the radially deformed Laplace operator on $\mathbb{R}^{n+m}$ in \cite{bko} plus the ordinary Laplace operator, i.e. $|x|^{-2\alpha}\Delta_{x}+\Delta_{y}$, which is also called the split Laplace operator.

(2) The $\mathfrak{sl}_{2}$ structure makes it possible to generalize many results to the present setting. For instance, a Fourier transform  can be defined for  $\mathcal{L}_{\alpha}$, using an exponential operator similarly as was done in \cite{bko}. Then the (sharp) $L^{2}$ Hardy and Hardy-Rellich inequalities for the Grushin operator $\Delta_{\alpha}$, as well as several uncertainty inequalities will follow from this Fourier transform. 
\end{remark}

  Recall that analogues of the classical Kelvin transform have been defined on the Heisenberg group and  Grushin spaces via group theory methods \cite{kor, mor}. Remarkably,  these transforms preserve the class of functions annihilated by the sub-Laplacian and the Grushin operator respectively. Therefore Maxwell's classical construction of harmonic polynomials (see e.g. \cite{mu}) can be extended to these settings \cite{kor}.
The $\mathfrak{sl}_{2}$ structure provides another way, i.e. via differential operators, to construct Grushin-harmonic polynomials.

\begin{theorem} The projection operator, which maps a given homogeneous polynomial of $\delta_{\lambda}$-degree $k$ to its harmonic component of degree $k-2\ell$, is given by
 \begin{eqnarray*}
{\rm Proj}_{\ell}^{k}=\sum_{j=0}^{\lfloor\frac{k}{2}\rfloor -\ell}\alpha_{j}\,\rho^{2j}\,\mathcal{L}_{\alpha}^{j+\ell},
 \end{eqnarray*}
where  $\alpha_{j}=\frac{(-1)^{j}\left(Q/2+k-2\ell-1\right)}{4^{j+\ell} j!\ell!}\frac{\Gamma\left(Q/2+k-2\ell-j-1\right)}{\Gamma\left(Q/2+k-\ell\right)}.$
\end{theorem}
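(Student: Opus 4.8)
The plan is to realize $\mathrm{Proj}_\ell^k$ as a polynomial in the $\mathfrak{sl}_2$ generators and to fix the coefficients $\alpha_j$ by imposing that the operator be the identity on $\rho^{2\ell}\mathcal{H}_{k-2\ell}^\alpha$ and zero on every other piece of the Fischer decomposition (Theorem \ref{fis}). Concretely, I would first record how the triple $(\mathcal{L}_\alpha,\rho^2,\mathbb{E}_\alpha+Q/2)$ acts on the summands $\rho^{2p}\mathcal{H}_{k-2p}^\alpha$. Since $\Delta_\alpha$ kills harmonics and $\mathcal{L}_\alpha=(\rho/|x|)^{2\alpha}\Delta_\alpha$, we have $\mathcal{L}_\alpha H=0$ for $H\in\mathcal{H}^\alpha_{k-2p}$; using the commutator $[\mathcal{L}_\alpha,\rho^2]=4(\mathbb{E}_\alpha+Q/2)$ together with $\mathbb{E}_\alpha(\rho^{2s}H)=(k-2p+2s)\rho^{2s}H$, an induction shows that $\mathcal{L}_\alpha$ lowers the power of $\rho^2$ by one and multiplies by an explicit scalar, so that
\begin{equation}\label{eq:Laction}
\mathcal{L}_\alpha^{\,r}\bigl(\rho^{2p}H\bigr)=c(p,r)\,\rho^{2p-2r}H,\qquad H\in\mathcal{H}^\alpha_{k-2p},
\end{equation}
with $c(p,r)=4^r\frac{p!}{(p-r)!}\cdot\frac{\Gamma(Q/2+k-p)}{\Gamma(Q/2+k-2p+r)}$ when $r\le p$ and $c(p,r)=0$ when $r>p$. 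This is the one genuinely computational step, and it is the place where the homogeneous dimension $Q$ enters through the eigenvalue of $\mathbb{E}_\alpha+Q/2$; I expect the main obstacle to be bookkeeping the Gamma-function factors in \eqref{eq:Laction} correctly.

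With \eqref{eq:Laction} in hand, the rest is linear algebra. Writing a general $u\in\mathcal{P}_k^\alpha$ as $u=\sum_{p}\rho^{2p}H_{k-2p}$ in its Fischer decomposition, applying $\mathcal{L}_\alpha^{j+\ell}$ and then multiplying by $\rho^{2j}$ sends the $p$-th summand to $c(p,j+\ell)\,\rho^{2p-2\ell}H_{k-2p}$. Summing over $j$ and collecting the coefficient of $\rho^{2p-2\ell}H_{k-2p}$, the operator $\mathrm{Proj}_\ell^k=\sum_j\alpha_j\,\rho^{2j}\mathcal{L}_\alpha^{j+\ell}$ acts on the $p$-th summand by the scalar $\sum_{j=0}^{p-\ell}\alpha_j\,c(p,j+\ell)$ (the upper limit reflecting $c(p,j+\ell)=0$ once $j+\ell>p$). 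I would then require this scalar to equal $\delta_{p,\ell}$ for all admissible $p$, which is exactly the defining property of the projection onto $\rho^{2\ell}\mathcal{H}_{k-2\ell}^\alpha$.

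The normalization $p=\ell$ forces $\alpha_0\,c(\ell,\ell)=1$, pinning down $\alpha_0$, while the conditions for $p>\ell$ yield a triangular linear system for the remaining $\alpha_j$. The cleanest way to solve it is to recognize the system as a finite-difference identity for Pochhammer symbols: the claimed formula $\alpha_j=\frac{(-1)^j\,(Q/2+k-2\ell-1)}{4^{j+\ell}j!\,\ell!}\,\frac{\Gamma(Q/2+k-2\ell-j-1)}{\Gamma(Q/2+k-\ell)}$ has the shape of an alternating binomial sum, and substituting it into $\sum_{j}\alpha_j\,c(p,j+\ell)$ reduces, after cancelling the common Gamma factors, to a terminating hypergeometric sum of type ${}_2F_1$ evaluated at $1$. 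I would invoke the Chu–Vandermonde identity (or equivalently Gauss's summation of a balanced ${}_2F_1(\cdot;1)$) to show this sum collapses to $\delta_{p,\ell}$, thereby verifying both the normalization and the annihilation conditions simultaneously.

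For completeness I would note that surjectivity of $\Delta_\alpha:\mathcal{P}_k^\alpha\to\mathcal{P}_{k-2}^\alpha$ (Theorem \ref{cs1}) and the orthogonal decomposition (Theorem \ref{fis}) guarantee that an operator agreeing with the identity on $\rho^{2\ell}\mathcal{H}_{k-2\ell}^\alpha$ and vanishing on all other Fischer summands \emph{is} the orthogonal projection $\mathrm{Proj}_\ell^k$; since $\mathcal{P}_k^\alpha$ is finite-dimensional and the summands span it, the operator is uniquely determined by these values, so no separate uniqueness argument is needed. The only delicate point is ensuring the Gamma functions in $\alpha_j$ and in $c(p,r)$ are never singular over the relevant range, which holds because $Q/2+k-2\ell-1>0$ for all the indices that occur.
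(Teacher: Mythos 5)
Your strategy coincides with the paper's: both use the $\mathfrak{sl}_{2}$ relation $[\mathcal{L}_{\alpha},\rho^{2}]=4(\mathbb{E}_{\alpha}+Q/2)$ to compute the action of powers of $\mathcal{L}_{\alpha}$ on the Fischer summands $\rho^{2p}\mathcal{H}^{\alpha}_{k-2p}$, arrive at a triangular linear system, and invert it; the paper records this action in \eqref{gn1} and then refers the inversion to the classical case, while your plan to verify the stated $\alpha_{j}$ directly via a Chu--Vandermonde (finite-difference) collapse is, if anything, more explicit. However, the one step you flag as delicate is the step you get wrong. Iterating $\mathcal{L}_{\alpha}(\rho^{2s}H)=2s\,(2s+Q+2(k-2p)-2)\,\rho^{2s-2}H$ (this is \eqref{p11}) for $s=p,p-1,\dots,p-r+1$ gives
\begin{equation*}
\mathcal{L}_{\alpha}^{\,r}\bigl(\rho^{2p}H\bigr)=4^{r}\,\frac{p!}{(p-r)!}\,\frac{\Gamma\left(Q/2+k-p\right)}{\Gamma\left(Q/2+k-p-r\right)}\,\rho^{2p-2r}H,
\end{equation*}
a product of $r$ linear factors, in agreement with the double-factorial coefficients of \eqref{gn1}. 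Your scalar $c(p,r)$ has denominator $\Gamma(Q/2+k-2p+r)$ instead of $\Gamma(Q/2+k-p-r)$, i.e.\ it is a product of $p-r$ factors; the two agree only when $p=2r$. This is not a harmless slip for your argument: your formula gives $c(\ell,\ell)=4^{\ell}\ell!$, whence $\alpha_{0}c(\ell,\ell)=\Gamma(Q/2+k-2\ell)/\Gamma(Q/2+k-\ell)\neq 1$ in general, so even the normalization at $p=\ell$ fails and the verification would break down at the very first step.

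With the corrected scalar everything you propose goes through. For $p=\ell$ the single surviving term telescopes to $1$, and for $p>\ell$ the quantity $\sum_{j=0}^{p-\ell}\alpha_{j}c(p,j+\ell)$ factors as a nonzero constant times $\sum_{j=0}^{p-\ell}(-1)^{j}\binom{p-\ell}{j}\frac{\Gamma(Q/2+k-2\ell-j-1)}{\Gamma(Q/2+k-p-\ell-j)}$, which is the $(p-\ell)$-th finite difference of a polynomial in $j$ of degree $p-\ell-1$ and hence vanishes --- exactly the collapse you anticipated. Your remarks on why agreeing with the identity on $\rho^{2\ell}\mathcal{H}^{\alpha}_{k-2\ell}$ and with zero on the other summands suffices (via Theorems \ref{cs1} and \ref{fis}), and on the nonsingularity of the Gamma factors, are correct.
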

\begin{proof} Let $p_{\ell}$ be a homogeneous polynomial of  $\delta_{\lambda}$-degree $k\in \mathbb{N}_{0}$. Then by the commutator relations in Theorem \ref{sl2}, we have
\begin{eqnarray}\label{p1}
\mathcal{L}_{\alpha}\left(\rho^{k}p_{\ell}\right)=k(k+Q+2\ell-2)\rho^{k-2}p_{\ell}+\rho^{k}\mathcal{L}_{\alpha}p_{\ell}
\end{eqnarray}
where $k,\ell\in \mathbb{N}_{0}$, $k$ being even. Furthermore, if $\Delta_{\alpha}p_{\ell}=0$, Eq.\,\eqref{p1} reduces to
\begin{eqnarray}\label{p11}
\mathcal{L}_{\alpha}\left(\rho^{k}p_{\ell}\right)=k(k+Q+2\ell-2)\rho^{k-2}p_{\ell}.
\end{eqnarray}

By Corollary \ref{fis},  each  homogeneous polynomial $f_{k}$ of $\delta_{\lambda}$-degree $k$ can be decomposed as
 \begin{eqnarray} \label{fs1}
f_{k}=u_{k}+\rho^{2}u_{k-2}+\rho^{4}u_{k-4}+\cdots,
\end{eqnarray}
where each $u_{j}$ is a homogeneous Grushin-harmonic of $\delta_{\lambda}$-degree $j$.
Making use of \eqref{p11} for  several times, we get
 \begin{equation} \label{gn12}
     \begin{split}
&\mathcal{L}_{\alpha}f_{k}=2(Q+2k-4)u_{k-2}+4(Q+2k-6)\rho^{2}u_{k-4}+\cdots,\\
&\mathcal{L}_{\alpha}^{2}f_{k}=8(Q+2k-6)(Q+2k-8)u_{k-4}+\cdots,\\
&\mathcal{L}_{\alpha}^{3}f_{k}=48(Q-2k-8)(Q-2k-10)(Q-2k-12)u_{k-6}+\cdots,
     \end{split}
 \end{equation}
and in general,
 \begin{eqnarray}\label{gn1}
\mathcal{L}_{\alpha}^{v}f_{k}=\sum_{j=v}^{\lfloor\frac{k}{2}\rfloor}
\frac{(2j)!!}{(2j-2v)!!}\frac{(Q+2k-2j-2)!!}{(Q+2k-2j-2v-2)!!}
\rho^{2j-2v}u_{k-2j},
\end{eqnarray}
where
 \begin{eqnarray*}
j!!=\left\{
      \begin{array}{ll}
        j(j-2)(j-4)\cdot 4\times 2, & \hbox{$j$ is even;} \\
        j(j-2)(j-4)\cdot 3 \times1, & \hbox{$j$ is odd.}
      \end{array}
    \right.
\end{eqnarray*}

Eqs.\,\eqref{gn12} or \eqref{gn1} form a set of simultaneous equations. We solve these equations as was done for the ordinary case in \cite[Theorem 5.1.15]{dY} which yields expressions for the homogeneous harmonic polynomials that occur in the decomposition \eqref{fs1}.
\end{proof}



\section{Bounds for the Grushin-harmonic projection}\label{sec4}
 The main purpose of this section is to establish the weighted  $L^{p}-L^{q}$ estimate  for the projection operator $P_{k}$ in \eqref{GH}, which is suitable for deriving  the Carleman estimates.

\subsection{$L^{1}-L^{\infty}$ estimates}
In this subsection, we focus on obtaining  the $L^{1}-L^{\infty}$ estimates for the projection operator  in \eqref{GH}.
To achieve this, we first consider the point-wise bounds for the reproducing kernels of Grushin-harmonics in \eqref{af1}.
Recall that for ordinary spherical harmonics over $\mathbb{S}^{n+m-1}$, there exists a constant $C$ independent of $k$ such that
\begin{eqnarray}\label{rbb1}
\left|K^{n+m}_{k}(\zeta_{1},\zeta_{2})\right|\le C(k+1)^{n+m-2},
\end{eqnarray}
see, e.g.\,\cite{aar}. We show that such an estimate still remains valid  in the (higher step) Grushin setting. It will be proven by extending the approach in \cite{GS}, developed there  for the specific  Grushin operator $\Delta_{1}$ on $\mathbb{R}^{n+1}$. However, it is worth noting that our proof heavily relies on  the addition formula  in Eq.\,\eqref{af}, unavailable around the time \cite{GS} appeared. 

We  give some technical lemmas first.

\begin{lemma} \label{li2} For  $n\ge 4$ and $m\ge 2$,  there exists a positive constant $C$ independent of $k$ and $\phi$ such that
\begin{equation}\label{t1}
    \begin{split}
        &\mathop{\sum_{\widetilde{k}\in\mathbb{N}_{0}}}_{0\le\ell,j\le k}b_{k,\ell,j}^{2}\left[\cos^{j}\phi\sin^{\frac{\ell}{\alpha+1}}\phi
P_{\widetilde{k}}^{(\mu, \gamma-1)}(\cos 2\phi)\right]^{2}\\ &\times(\ell+1)^{n-2}(j+1)^{m-2}
\le C(k+1)^{n+m-2},
    \end{split}
\end{equation}
for every $k\in\mathbb{N}_{0}$, where $\widetilde{k}, \mu, \gamma$ are defined in Theorem \ref{th1} and $b_{k,\ell,j}$ is the normalization constant given in \eqref{nc1}.
\end{lemma}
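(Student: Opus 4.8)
The quantity on the left of \eqref{t1} is, up to the comparabilities $d_\ell(n)\asymp_n(\ell+1)^{n-2}$ and $d_j(m)\asymp_m(j+1)^{m-2}$ coming from \eqref{a1}, exactly the restriction to the diagonal of the reproducing kernel $G_k$ of \eqref{af1}: setting $\xi=\phi$, $\eta=\omega$ and using $\sum_p|Y_{\ell,p}(\omega_1)|^2=K_\ell^n(\omega_1,\omega_1)=d_\ell(n)/|\mathbb{S}^{n-1}|$ and the analogue on $\mathbb{S}^{m-1}$, one sees that the left side of \eqref{t1} is $\asymp G_k(\phi,\omega,\phi,\omega)$. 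Since $n,m$ are fixed, replacing $d_\ell(n)$ by $(\ell+1)^{n-2}$ only costs $n,m$-dependent constants, which are allowed. So the first step is to reduce \eqref{t1} to the uniform pointwise bound $G_k(\phi,\omega,\phi,\omega)\le C(k+1)^{n+m-2}$.

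The second step is a purely algebraic simplification of the summand. Combining the normalization \eqref{nc1}, the explicit value \eqref{bab}, and the definition \eqref{g32}, and using $\tfrac{1-\cos2\phi}{2}=\sin^2\phi$ and $\tfrac{1+\cos2\phi}{2}=\cos^2\phi$, all the $\Gamma$-factors cancel and one is left with
\[
b_{k,\ell,j}^{2}\,h_{k,\ell,j}(\phi)^{2}=2\bigl(2\widetilde k+\mu+\gamma\bigr)\,\cos^{2-m}\phi\,\sin^{\frac{2-n}{\alpha+1}}\phi\;\bigl(g_{\widetilde k}^{(\mu,\gamma-1)}(\cos 2\phi)\bigr)^{2}.
\]
The decisive point is that the prefactor does not depend on $\ell,j$: a direct computation gives $2\widetilde k+\mu+\gamma=\frac{2k+n-2}{2(\alpha+1)}+\frac m2=:\Lambda\sim k/(\alpha+1)$, so the $k$-scale factors cleanly out of the whole double series.

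The third step carries out the summation exactly in the base case $\alpha=0$. There $h_{k,\ell,j}$ coincides with Xu's $f_{k,\ell,j}$ of Theorem \ref{adfi}, since $\mu=\frac n2-1+\ell$, $\gamma-1=\frac m2-1+j$, $\widetilde k=\frac{k-\ell-j}{2}$, and formula \eqref{orp} converts each $K_\ell^n$, $K_j^m$ into a Gegenbauer carrying the weights $(2\ell+n-2)$, $(2j+m-2)$. Applying the addition formula \eqref{af} with $u=m$, $v=n$ and $\xi=\phi$ then collapses the double sum to a single Gegenbauer,
\[
G_k(\phi,\omega,\xi,\eta)=c_{n,m}\,(2k+n+m-2)\,C_k^{(\frac{n+m-2}{2})}\!\bigl(\cos\phi\cos\xi\cos\tau_2+\sin\phi\sin\xi\cos\tau_1\bigr),
\]
with $\tau_1,\tau_2$ the geodesic angles on $\mathbb{S}^{n-1}$, $\mathbb{S}^{m-1}$. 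Since $|C_k^{(\lambda)}(t)|\le C_k^{(\lambda)}(1)=\binom{k+2\lambda-1}{k}\asymp(k+1)^{2\lambda-1}$ for $t\in[-1,1]$, the diagonal value is $\lesssim (2k)(k+1)^{n+m-3}\asymp(k+1)^{n+m-2}$, which yields \eqref{t1} (and, by Cauchy--Schwarz, the full off-diagonal $L^1$--$L^\infty$ bound) when $\alpha=0$.

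\textbf{The main obstacle} is the passage to $\alpha\ge1$. Now the Jacobi data $\mu=\frac{n-2+2\ell}{2(\alpha+1)}$, $\widetilde k=\frac{k-\ell-j(\alpha+1)}{2(\alpha+1)}$ and the exponent $\sin^{\ell/(\alpha+1)}\phi$ are all scaled by $(\alpha+1)$, so the Gegenbauer degree carried by the $\phi$-variable is $\sim\ell/(\alpha+1)$, while $Y_\ell$ still has degree $\ell$; the two degrees no longer match and \eqref{af} cannot be used verbatim. A purely termwise use of the Bernstein estimate \eqref{bs} also fails: bounding $\bigl(g_{\widetilde k}^{(\mu,\gamma-1)}(\cos2\phi)\bigr)^2\le C\Lambda^{-1/2}(\sin\phi\cos\phi)^{-1}$ and summing loses a factor $\Lambda^{1/2}\sim k^{1/2}$ and reinstates the endpoint singularities $\cos^{1-m}\phi\,\sin^{\frac{2-n}{\alpha+1}-1}\phi$, overshooting the sharp exponent $n+m-2$. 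The route I would pursue is to regard the $\ell$-sum, which runs over the single residue class $\ell\equiv k\pmod{\alpha+1}$, as a \emph{generalized} (rank-one Dunkl) Gegenbauer expansion with fixed fractional reflection parameter $\tfrac{r}{\alpha+1}$, $r=k\bmod(\alpha+1)$, and to invoke the corresponding generalized addition formula so that the double series again collapses to a single Gegenbauer-type kernel maximized on the diagonal; the constancy of $\Lambda$ from the second step is precisely what guarantees that this collapse reproduces the power $(k+1)^{n+m-2}$ with no residual loss.
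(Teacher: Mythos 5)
Your reduction to the diagonal value of the kernel, the algebraic identity $b_{k,\ell,j}^{2}h_{k,\ell,j}(\phi)^{2}=2\bigl(2\widetilde k+\mu+\gamma\bigr)\sin^{\frac{2-n}{\alpha+1}}\phi\,\cos^{2-m}\phi\,\bigl(g_{\widetilde k}^{(\mu,\gamma-1)}(\cos2\phi)\bigr)^{2}$ with $2\widetilde k+\mu+\gamma=\frac{2k+n-2}{2(\alpha+1)}+\frac m2$ independent of $\ell,j$, and the complete treatment of the case $\alpha=0$ are all correct. But the lemma is only nontrivial for $\alpha\ge1$, and there your argument stops at naming the obstacle: the proposed collapse of the double series via a ``generalized (rank-one Dunkl) addition formula'' is neither stated nor proved, and nothing in your sketch explains where the hypothesis $n\ge4$ would enter. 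That hypothesis is not decorative --- the paper's Remark after the lemma records that for $n\in\{2,3\}$ and $\alpha\ge2$ the same method only yields $C(k+1)^{m+2}$ --- so a proof that never uses $n\ge4$ cannot be complete. As written, this is a genuine gap.

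The paper closes the gap without any new addition formula. Write $k=k_0(\alpha+1)+k_\alpha$ and (forced by $\widetilde k\in\mathbb{N}_0$) $\ell=\ell_0(\alpha+1)+k_\alpha$ with $k_\alpha\in\{0,\dots,\alpha\}$, and apply Xu's formula \eqref{af} --- valid for arbitrary real $u,v>1$, not only integer ``dimensions'' --- with $u=m$, $v=\frac{n-2+2k_\alpha}{\alpha+1}+2$, ${\rm k}=k_0$, $\xi=\phi$ and $\theta_1=\theta_2=0$. Every term on the right of \eqref{af} is then a non-negative square, so the identity is used as a \emph{majorant} rather than as an exact evaluation of the kernel: the addition formula carries the weights $(j+1)^{m-2}(\ell_0+1)^{v-2}$, whereas \eqref{t1} demands $(j+1)^{m-2}(\ell+1)^{n-2}$, and the deficit is absorbed by $(\ell+1)^{n-2}\le C(k+1)^{\,n-2-(v-2)}(\ell_0+1)^{v-2}$ using $\ell_0\le k_0\le k$, which requires $n-2\ge\frac{n-2+2k_\alpha}{\alpha+1}$ for all $k_\alpha\le\alpha$ --- precisely the condition $n\ge4$. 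Summing and invoking \eqref{sim1} gives $C(k+1)^{n-2-(v-2)}\cdot(k+1)^{(v-2)+m}=C(k+1)^{n+m-2}$. So the degree mismatch you identified ($\ell$ on $\mathbb{S}^{n-1}$ versus $\ell_0\sim\ell/(\alpha+1)$ in the $\phi$-variable) is real, but it is resolved by positivity plus a weight comparison, not by a Dunkl-type identity; to salvage your route you would have to state and prove such an identity and then still extract the exponent $n+m-2$ from it, which is substantially harder than the comparison argument above.
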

\begin{proof} {\rm (\uppercase\expandafter{\romannumeral1})} First, we consider the case when $k\equiv 0({\rm  mod}(\alpha+1))$. In this case by the condition \[\widetilde{k}=\frac{k-\ell-j(\alpha+1)}{2(\alpha+1)}\in \mathbb{N}_{0}, \] we must have $\ell\equiv 0({\rm  mod}(\alpha+1))$. Thus we can denote $\ell=(\alpha+1)\ell_{0}$ and $k=(\alpha+1)k_{0}$ with $\ell_{0}, k_{0}\in \mathbb{N}_{0}$. Now the left hand side of Eq.\,\eqref{t1} becomes
\begin{equation}\label{i11}
    \begin{split}
       &\sum_{i=0}^{\left\lfloor k_{0}/2\right\rfloor}\sum_{j+\ell_{0}=k_{0}-2i} b_{k,\ell,j}^{2} \left(\cos^{j}\phi\sin^{\ell_{0}}\phi  P_{\frac{k_{0}-\ell_{0}-j}{2}}^{\left(\frac{n-2}{2(\alpha+1)}+\ell_{0}, \frac{m}{2}+j-1\right)}(\cos 2\phi)\right)^{2}\\&\times((\alpha+1)\ell_{0}+1)^{n-2}(j+1)^{m-2}. 
    \end{split}
\end{equation}

On the other side,  the addition formula \eqref{af} can be equivalently written as
\begin{equation}\label{add1}
    \begin{split}
        &e_{u+v,{\rm k}}\,C_{{\rm k}}^{\left(\frac{u+v}{2}-1\right)}(\cos \phi\cos \xi\cos\theta_{1}+\sin\phi\sin \xi\cos\theta_{2} )
\\=&\sum_{i=0}^{\lfloor {\rm k}/2\rfloor}\sum_{j+\ell={\rm k}-2i}f_{{\rm k},\ell,j}(\cos 2\phi)f_{{\rm k},\ell,j}(\cos 2\xi)\\&\times e_{u,j}\,C_{j}^{(\frac{u}{2}-1)}(\cos\theta_{1})\,e_{v,\ell}\,C_{\ell}^{(\frac{v}{2}-1)}(\cos\theta_{2}),
    \end{split}
\end{equation}
where $f_{{\rm k},\ell,j}$ is given in Theorem \ref{adfi}, and
\[ e_{u, j}:=\frac{1}{2}\Gamma\left(\frac{u}{2}-1\right)\left(\frac{u}{2}-1+j\right).\] Note that we used the sub-index notation ${\rm k}$ in \eqref{add1} on purpose, which is not the homogeneous degree $k$ now. It will be set by different values related with the homogeneous degree $k$ in the subsequent proof.

Now plugging $u=m$, $v=\frac{n-2}{\alpha+1}+2$, ${\rm k}=k_{0}$, $\xi=\phi$ and $\theta_{1}=\theta_{2}=0$ in  \eqref{add1}, we obtain
\begin{equation}\label{px1}
    \begin{split}
      &e_{u+v,k_{0}}C_{k_{0}}^{\left(\frac{u+v}{2}-1\right)}(1)
 =\sum_{i=0}^{\left\lfloor k_{0}/2\right\rfloor}\sum_{j+\ell_{0}=k_{0}-2i} e_{u,j}C_{j}^{(\frac{u}{2}-1)}(1)\,e_{v,\ell_{0}}C_{\ell_{0}}^{(\frac{v}{2}-1)}(1)\\
 &\times b_{k,\ell,j}^{2} 
 \left(\cos^{j}\phi\,\sin^{\ell_{0}}\phi\,  P_{\frac{k_{0}-\ell_{0}-j}{2}}^{\left(\frac{n-2}{2(\alpha+1)}+\ell_{0}, \frac{m}{2}+j-1\right)}(\cos 2\phi)\right)^{2}.
    \end{split}
\end{equation}

Comparing \eqref{i11} with the right hand side of \eqref{px1}, together with 
\begin{equation}\label{sim1}
    \begin{split}
     e_{u,k}C_{k}^{(\frac{u}{2}-1)}(1)=&\frac{\left(u/2-1+k\right)\Gamma\left(u/2-1\right)\Gamma(k+u-2)}{\Gamma(u-2)\Gamma(k+1)}\\
\sim& (k+1)^{u-2},  
    \end{split}
\end{equation}
it follows that  \eqref{i11} is bounded by
\begin{eqnarray*}
 &&C(k+1)^{n-2-\frac{n-2}{\alpha+1}}\sum_{i=0}^{\left\lfloor k_{0}/2\right\rfloor}\sum_{j+\ell_{0}=k_{0}-2i} (j+1)^{m-2} (\ell_{0}+1)^{\frac{n-2}{\alpha+1}}\\
 &&\times \,b_{k,\ell,j}^{2} \left(\cos^{j}\phi\,\sin^{\ell_{0}}\phi\,  P_{\frac{k_{0}-\ell_{0}-j}{2}}^{\left(\frac{n-2}{2(\alpha+1)}+\ell_{0}, \frac{m}{2}+j-1\right)}(\cos 2\phi)\right)^{2}\\ 
&\le& C(k+1)^{n-2-\frac{n-2}{\alpha+1}}\left[ e_{u+v,k_{0}}C_{k_{0}}^{\left(\frac{u+v}{2}-1\right)}(1)\right]\\&\le&
C(k+1)^{n-2-\frac{n-2}{\alpha+1}}\cdot (k+1)^{\frac{n-2}{\alpha+1}+m}\\
&\le& C(k+1)^{n+m-2},
\end{eqnarray*}
where in the second step we have used  $n-2\ge \frac{n-2}{\alpha+1}$ when $n\ge 2$. This proves the lemma when $k\equiv 0({\rm  mod}(\alpha+1))$.

{\rm (\uppercase\expandafter{\romannumeral2})} For the cases  when \[k\equiv k_{\alpha}({\rm  mod}(\alpha+1)), \quad  k_{\alpha}\in \{1, \ldots, \alpha\},\] or equivalently $k=k_{0}(\alpha+1)+k_{\alpha}$,  we  have \[\ell=(\alpha+1)\ell_{0}+k_{\alpha},\] by the condition $\widetilde{k}=\frac{k-\ell-j(\alpha+1)}{2(\alpha+1)}\in \mathbb{N}_{0}$.

Thus in these cases, the left hand side of \eqref{t1} becomes
\begin{equation}\label{i2}
    \begin{split}
      & \sin^{\frac{2k_{\alpha}}{\alpha+1}}\phi \sum_{i=0}^{\left\lfloor k_{0}/2\right\rfloor}\sum_{j+\ell_{0}=k_{0}-2i} \left(\cos^{j}\phi\, \sin^{\ell_{0}}\phi \, P_{\frac{k_{0}-\ell_{0}-j}{2}}^{\left(\frac{n-2+2k_{\alpha}}{2(\alpha+1)}+\ell_{0}, \frac{m}{2}+j-1\right)}(\cos 2\phi)\right)^{2} \\& \times b_{k,\ell,j}^{2} (\ell+1)^{n-2}(j+1)^{m-2}.
    \end{split}
\end{equation}

Now setting $u=m$, $v=\frac{n-2+2k_{\alpha}}{\alpha+1}+2$, ${\rm k}=k_{0}$ and $\phi=\xi$ in the addition formula \eqref{add1}, we get
\begin{eqnarray}\label{px2}
         &&e_{u+v,k_{0}}C_{k_{0}}^{\left(\frac{m}{2}+\frac{n-2+2k_{\alpha}}{2(\alpha+1)}\right)}(\cos^{2} \phi\cos\theta_{1}+\sin^{2}\phi\cos\theta_{2})\\
 &=&\sum_{i=0}^{\left\lfloor k_{0}/2\right\rfloor}\sum_{j+\ell_{0}=k_{0}-2i} e_{u,j}C_{j}^{\left(\frac{m}{2}-1\right)}(\cos\theta_{1})\,e_{v,\ell_{0}}C_{\ell_{0}}^{\left(\frac{n-2+2k_{\alpha}}{2(\alpha+1)}\right)}(\cos \theta_{2})\nonumber\\
 &&\times b_{k,\ell,j}^{2} \left(\cos^{j}\phi\,\sin^{\ell_{0}}\phi\,  P_{\frac{k_{0}-\ell_{0}-j}{2}}^{\left(\frac{n-2+2k_{\alpha}}{2(\alpha+1)}+\ell_{0}, \frac{m}{2}+j-1\right)}(\cos 2\phi)\right)^{2}.\nonumber
\end{eqnarray}

When $n\ge 4,$ it is easy to see that  \[n-2\ge \frac{n-2+2k_{\alpha}}{\alpha+1}.\]  Similarly to the previous case, set $\theta_{1}=\theta_{2}=0$ in \eqref{px2}. Then it follows that \eqref{i2} is bounded by
\begin{eqnarray*}
&&C\left(\sin^{\frac{2k_{\alpha}}{\alpha+1}}\phi\right)(k+1)^{n-2-\frac{n-2+2k_{\alpha}}{\alpha+1}}\cdot e_{\frac{n-2+2k_{\alpha}}{\alpha+1}+m+2, k_{0}} C_{k_{0}}^{\left(\frac{m}{2}+\frac{n-2+2k_{\alpha}}{2(\alpha+1)}\right)}(1 )
\\&& \le  C(k+1)^{n+m-2},
\end{eqnarray*}
where we have used \eqref{sim1} again. This completes the proof.
\end{proof}
\begin{remark} \label{rb23} When $n=2$ or $3$, the above proof in {\rm (\uppercase\expandafter{\romannumeral2})} shows that the left-hand side of Eq.\,\eqref{t1} can be  bounded by  $C(k+1)^{m+2}$ for $\alpha\in \mathbb{N} 
$, which is not as tight as the desired $C(k+1)^{n+m-2}$. Subtle estimates seem necessary to achieve that. However,  the less tight bound is  already sufficient for our later purpose. 
\end{remark}

The case when $\alpha=1$ is much easier. The estimate in Eq.\,\eqref{t1} remains valid when $n\in \{2, 3\}$ and $m\ge 2$. It can be proved by utilizing  the addition formula \eqref{af} and  employing subtle estimates of Gegenbauer polynomials through an extension of the approach in \cite[p.\,143-146]{GS}. As the integral estimations are similar with \cite{GS}, we only list the result here and 
omit the proof. 

\begin{lemma} \label{ggg1} When $n=2$ or $3$ and $\alpha=1$, there exists a positive constant $C$ independent of $k$ and $\phi$ such that
\begin{eqnarray*}
&&\mathop{\sum_{\widetilde{k}\in\mathbb{N}_{0}}}_{0\le\ell,j\le k}b_{k,\ell,j}^{2}\left[\cos^{j}\phi\,\sin^{\frac{\ell}{\alpha+1}}\phi
\,P_{\widetilde{k}}^{(\mu, \gamma-1)}(\cos 2\phi)\right]^{2}\\ &&\times(\ell+1)^{n-2}(j+1)^{m-2}
\le C(k+1)^{n+m-2}\nonumber,
\end{eqnarray*}
for every $k\in\mathbb{N}_{0}$, where $\widetilde{k}, \mu, \gamma$ are defined in Theorem \ref{th1} and $b_{k,\ell,j}$ is the normalization constant given in \eqref{nc1}.
\end{lemma}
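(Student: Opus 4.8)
The plan is to follow the strategy of Lemma \ref{li2} as far as it carries sharpness and then repair the one case where it fails. Splitting according to the parity of $k$ (since $\alpha=1$, only the residues $k_{\alpha}\in\{0,1\}$ occur), the even case $k\equiv0\,({\rm mod}\,2)$ is already handled by the argument of part (\uppercase\expandafter{\romannumeral1}) of Lemma \ref{li2}, which uses only $n\ge2$: with $u=m$, $v=\frac{n+2}{2}$ and $\theta_{1}=\theta_{2}=0$ in \eqref{add1} the left-hand side is bounded by $C(k+1)^{m+\frac{n-2}{2}}$ times the surplus $(\ell_{0}+1)^{\frac{n-2}{2}}\le(k+1)^{\frac{n-2}{2}}$, which lands exactly on $(k+1)^{n+m-2}$. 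The entire difficulty is therefore the odd case $k\equiv1\,({\rm mod}\,2)$. Here the Jacobi index is forced to be $\mu=\frac{n}{4}+\ell_{0}$, so \eqref{add1} can only be applied with $v=\frac{n}{2}+2$; evaluating it at $\theta_{1}=\theta_{2}=0$ then produces the factor $(\ell_{0}+1)^{v-2}=(\ell_{0}+1)^{n/2}$. Since $n/2>n-2$ for $n\in\{2,3\}$, this surplus cannot be lowered to the desired $(\ell+1)^{n-2}\sim(\ell_{0}+1)^{n-2}$ inside a nonnegative sum, so the soft argument yields only $C(k+1)^{m+n/2}$, i.e. the non-sharp $C(k+1)^{m+2}$ recorded in Remark \ref{rb23}; and no parameter choice removes it, because $\mu$ is dictated by the problem.

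To recover the missing power I would rewrite the odd-case summand through the normalized Jacobi functions $g_{\widetilde{k}}^{(\mu,\gamma-1)}$ of \eqref{g32}. Using $\frac{1-\cos2\phi}{2}=\sin^{2}\phi$, $\frac{1+\cos2\phi}{2}=\cos^{2}\phi$ together with the relation $b_{k,\ell,j}^{2}=2(2\widetilde{k}+\mu+\gamma)\,(\text{$g$-prefactor})^{2}$ that follows from \eqref{bab} and \eqref{g32}, one gets the clean identity $b_{k,\ell,j}^{2}h_{k,\ell,j}(\phi)^{2}=2(2\widetilde{k}+\mu+\gamma)\,g_{\widetilde{k}}^{(\mu,\gamma-1)}(\cos2\phi)^{2}\,\sin^{1-n/2}\phi\,\cos^{2-m}\phi$, so the left-hand side of the lemma becomes $\sin^{1-n/2}\phi\,\cos^{2-m}\phi$ times a sum of $(2\widetilde{k}+\mu+\gamma)\,g_{\widetilde{k}}^{(\mu,\gamma-1)}(\cos2\phi)^{2}(\ell+1)^{n-2}(j+1)^{m-2}$. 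I would then split $\phi\in[0,\pi/2]$ into a bulk region and two endpoint caps of width comparable to $1/k$ about $\phi=0$ and $\phi=\pi/2$. In the bulk I would insert the Bernstein-type inequality \eqref{bs}, which gives $g_{\widetilde{k}}^{(\mu,\gamma-1)}(\cos2\phi)^{2}\le C(2\widetilde{k}+\mu+\gamma)^{-1/2}(\sin\phi\cos\phi)^{-1}$, and near the endpoints the corresponding Mehler--Heine/Darboux asymptotics for $g_{\widetilde{k}}^{(\mu,\gamma-1)}$, whose endpoint decay is precisely what offsets the singular prefactors $\sin^{1-n/2}\phi$ and $\cos^{2-m}\phi$; in each region the sum over $(\ell_{0},j)$ is then compared to an integral exactly as in \cite[pp.\,143--146]{GS}. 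This is what the statement that the integral estimations are similar to those of Garofalo and Shen refers to.

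The hard part will be extracting the sharp exponent $n+m-2$ uniformly in both $k$ and $\phi$ rather than the soft $m+2$: the Bernstein bound is accurate only in the bulk and over-estimates near the endpoints, so the whole gain must come from the endpoint analysis, where the Grushin degeneracy $\sin\phi\to0$ (the manifold $\{0\}\times\mathbb{R}^{m}$) and the singular weight $\cos^{2-m}\phi$ at $\phi=\pi/2$ must be balanced term by term against the endpoint vanishing of the Jacobi polynomials, followed by summation of the resulting near-critical series. I expect this book-keeping to be the crux, just as in \cite{GS}. It is also where the hypotheses $\alpha=1$ and $n\in\{2,3\}$ enter: with $\alpha=1$ only the two residue classes $k_{\alpha}\in\{0,1\}$ occur and the relevant half-integer Jacobi parameters stay in the range controlled by the Gegenbauer estimates of \cite{GS}, whereas larger $\alpha$ would force many residue classes with more irregular parameters, beyond the reach of this transplantation.
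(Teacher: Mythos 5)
Your diagnosis of the structure is correct and consistent with the paper's setup: for $\alpha=1$ the even residue class is already covered by part {\rm (\uppercase\expandafter{\romannumeral1})} of the proof of Lemma \ref{li2} (which only needs $n\ge 2$); the whole difficulty sits in the odd class, where $\mu=\frac{n}{4}+\ell_{0}$ forces $v=\frac{n}{2}+2$ in \eqref{add1} and the soft comparison overshoots; and your identity $b_{k,\ell,j}^{2}h_{k,\ell,j}(\phi)^{2}=2(2\widetilde{k}+\mu+\gamma)\,g_{\widetilde{k}}^{(\mu,\gamma-1)}(\cos2\phi)^{2}\sin^{1-n/2}\phi\,\cos^{2-m}\phi$ checks out against \eqref{bab}, \eqref{g32} and \eqref{nc1}. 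The paper itself does not write out a proof of this lemma --- it only asserts that the bound follows from the addition formula together with ``subtle estimates of Gegenbauer polynomials'' extending \cite[pp.\,143--146]{GS} --- so there is no line-by-line comparison to make. But your proposal stops exactly where that omitted argument begins, and the one concrete mechanism you do offer for the odd case cannot close the gap.

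The obstruction is quantitative. In the odd class one has $2\widetilde{k}+\mu+\gamma=\frac{k-1}{2}+\frac{n}{4}+\frac{m}{2}$, which is \emph{independent} of $(\ell_{0},j)$ and comparable to $k$; hence the Bernstein inequality \eqref{bs} gives the same termwise estimate $2(2\widetilde{k}+\mu+\gamma)\,g_{\widetilde{k}}^{(\mu,\gamma-1)}(\cos2\phi)^{2}\le Ck^{1/2}(\sin\phi\cos\phi)^{-1}$ for every admissible pair. Summing this against $(\ell+1)^{n-2}(j+1)^{m-2}$ over the $\asymp k^{2}$ pairs with $\ell_{0}+j\le k_{0}$ produces $Ck^{1/2}(\sin\phi\cos\phi)^{-1}k^{n+m-2}$, which already at $\phi=\pi/4$ is $Ck^{n+m-3/2}$ --- larger than the target by a factor $k^{1/2}$. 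So a decomposition of $[0,\pi/2]$ in the variable $\phi$ alone, with Bernstein applied uniformly to all indices in the bulk region, cannot succeed: the missing power must come from a decomposition of the \emph{index set} $(\ell_{0},j)$ depending on $\phi$, exploiting that for fixed $\phi$ the polynomial $P_{\widetilde{k}}^{(\mu,\gamma-1)}(\cos2\phi)$ lies far below its Bernstein envelope unless $\phi$ belongs to the oscillatory window determined by $\mu$, $\gamma$ and $\widetilde{k}$. That index decomposition, the summation of the resulting near-critical series, and the verification that the outcome survives the singular prefactor $\sin^{1-n/2}\phi\,\cos^{2-m}\phi$ at both endpoints are precisely the content of the Gegenbauer analysis in \cite{GS} that the lemma appeals to; they are the proof, and your proposal explicitly defers them. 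As it stands the argument establishes only the non-sharp bound of Remark \ref{rb23}.
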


Now,  the $L^{1}-L^{\infty}$ estimates for the projection operator $P_{k}$ in \eqref{GH} can be readily derived from the pointwise bounds of the series on the left-hand side of \eqref{t1}.
\begin{theorem}\label{wl1} When {\rm (1)} $n\ge 4$, $m\ge 2$ and $\alpha\in \mathbb{N}$, or {\rm (2)} $n\in \{2, 3\}$, $m\ge 2$ and $\alpha=1$,
there exists $C>0$ independent of $k$  such that
\begin{eqnarray*}
\|P_{k}(g)\|_{L^{\infty}(\Omega, {\rm d}\Omega)}\le C(k+1)^{n+m-2}\|g\|_{L^{1}(\Omega, {\rm d}\Omega)}
\end{eqnarray*}
for every $g\in L^{1}(\Omega, {\rm d}\Omega)$.
\end{theorem}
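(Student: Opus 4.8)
The plan is to exploit that $P_{k}$ is the integral operator \eqref{int1} with kernel $G_{k}$ given by \eqref{af1}, so that the desired $L^{1}$--$L^{\infty}$ estimate reduces to a uniform pointwise bound on the kernel itself. Indeed, for every $g\in L^{1}(\Omega,{\rm d}\Omega)$ the representation \eqref{int1} gives
$$
\|P_{k}(g)\|_{L^{\infty}(\Omega,{\rm d}\Omega)}\le\Big(\sup_{\phi,\omega,\xi,\eta}\big|G_{k}(\phi,\omega,\xi,\eta)\big|\Big)\,\|g\|_{L^{1}(\Omega,{\rm d}\Omega)},
$$
since $G_{k}$ is a finite sum of bounded continuous functions for each fixed $k$. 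Thus it suffices to prove $\sup|G_{k}|\le C(k+1)^{n+m-2}$.

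The first step is to control the two reproducing kernels appearing in \eqref{af1}. Because a Gegenbauer polynomial of positive parameter attains its maximum modulus on $[-1,1]$ at the endpoint $x=1$, the closed form \eqref{orp} yields $|K_{\ell}^{n}(\omega_{1},\eta_{1})|\le d_{\ell}(n)/|\mathbb{S}^{n-1}|$ and $|K_{j}^{m}(\omega_{2},\eta_{2})|\le d_{j}(m)/|\mathbb{S}^{m-1}|$; combined with the dimension estimate \eqref{a1} this gives $|K_{\ell}^{n}|\le C(\ell+1)^{n-2}$ and $|K_{j}^{m}|\le C(j+1)^{m-2}$. Inserting these into the triangle inequality applied to \eqref{af1}, and recalling from \eqref{g1} that $h_{k,\ell,j}(\phi)=\cos^{j}\phi\,\sin^{\ell/(\alpha+1)}\phi\,P_{\widetilde{k}}^{(\mu,\gamma-1)}(\cos 2\phi)$, produces
$$
\big|G_{k}(\phi,\omega,\xi,\eta)\big|\le C\mathop{\sum_{\widetilde{k}\in\mathbb{N}_{0}}}_{0\le\ell,j\le k}b_{k,\ell,j}^{2}\,\big|h_{k,\ell,j}(\phi)\big|\,\big|h_{k,\ell,j}(\xi)\big|\,(\ell+1)^{n-2}(j+1)^{m-2}.
$$

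Next I would split the weight $(\ell+1)^{n-2}(j+1)^{m-2}$ symmetrically between the two $h$-factors and apply Cauchy--Schwarz over the index set $(\widetilde{k},\ell,j)$. This separates the $\phi$- and $\xi$-dependence and bounds $|G_{k}|$ by the geometric mean of the sum $\sum b_{k,\ell,j}^{2}\,h_{k,\ell,j}(\phi)^{2}(\ell+1)^{n-2}(j+1)^{m-2}$ and its analogue in $\xi$. Each of these sums is precisely the series on the left-hand side of \eqref{t1}, hence at most $C(k+1)^{n+m-2}$ uniformly in the angle by Lemma \ref{li2} in case (1) and by Lemma \ref{ggg1} in case (2); taking the geometric mean yields $\sup|G_{k}|\le C(k+1)^{n+m-2}$, which finishes the proof.

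The substantive work has already been absorbed into Lemmas \ref{li2} and \ref{ggg1}, whose proofs hinge on the Koornwinder--Xu addition formula \eqref{af}; given those diagonal estimates the present deduction is entirely routine. The only point demanding a little attention is the symmetric splitting of the weight before Cauchy--Schwarz, which is exactly what makes both resulting sums coincide with the precise quantity bounded by the lemmas rather than with an unbalanced variant that the lemmas would not directly cover.
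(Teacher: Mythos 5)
Your proposal is correct and follows essentially the same route as the paper: both reduce the claim to the uniform kernel bound $\sup|G_{k}|\le C(k+1)^{n+m-2}$ and obtain it from the diagonal estimates of Lemmas \ref{li2} and \ref{ggg1} via a Cauchy--Schwarz step (the paper applies the reproducing-kernel inequality $|G_{k}(\phi,\omega,\xi,\eta)|\le|G_{k}(\phi,\omega,\phi,\omega)|^{1/2}|G_{k}(\xi,\eta,\xi,\eta)|^{1/2}$ directly, while you first bound the spherical kernels by their values at coincidence and then apply Cauchy--Schwarz over the index set, which amounts to the same thing).
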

\begin{proof} By the integral expression \eqref{int1} of the projection operator $P_{k}$, it suffices to show that
\begin{eqnarray}\label{d1}|G_{k}(\phi,\omega, \xi, \eta)|\le C(k+1)^{n+m-2}\end{eqnarray}
for $0\le \phi, \xi\le \pi/2$ and $\omega,\eta\in \mathbb{S}^{n-1}\times\mathbb{S}^{m-1}$.

On the other side, Lemmas \ref{li2} and \ref{ggg1} together with \eqref{rbb1} yield
\begin{equation}\label{d2}
    \begin{split}
       |G_{k}(\phi,\omega, \phi, \omega)| &\le C\mathop{\sum_{\widetilde{k}\in\mathbb{N}_{0}}}_{0\le\ell,j\le k}b_{k,\ell,j}^{2}h_{k,\ell,j}^{2}(\phi)(\ell+1)^{n-2}(j+1)^{m-2}\\ &\le C(k+1)^{n+m-2}.
    \end{split}
\end{equation}
Then by the explicit expression \eqref{af1} for $G_{k}(\phi,\omega, \xi, \eta)$ and the Cauchy-Schwarz inequality, we have
\begin{equation}\label{dd3}
 |G_{k}(\phi,\omega, \xi, \eta)|\le \left|G_{k}(\phi,\omega, \phi, \omega)\right|^{\frac{1}{2}} \left|G_{k}(\xi, \eta, \xi, \eta)\right|^{\frac{1}{2}}. 
\end{equation}
Combining  \eqref{d2} and \eqref{dd3}, we obtain \eqref{d1}. This concludes the proof.
\end{proof}
\begin{remark} \label{s1et} By formally setting $m=1$, we get the  $L^{1}-L^{\infty}$ estimates for the projectors associated to the Grushin operator $\Delta_{\alpha}$ defined on $\mathbb{R}^{n+1}$. It further reduces to the estimate for $\Delta_{1}$  on $\mathbb{R}^{n+1}$ in \cite[Theorem 3.1]{GS}.
\end{remark}
\begin{remark} By Remark \ref{rb23} and the proof of Theorem \ref{wl1}, when $n\in \{2, 3\}$  and  $\alpha\in \mathbb{N} \backslash \{1\}$, there exists a constant $C>0$ independent of $k$  such that
\begin{eqnarray*}
\|P_{k}(g)\|_{L^{\infty}(\Omega, {\rm d}\Omega)}\le C(k+1)^{m+2}\|g\|_{L^{1}(\Omega, {\rm d}\Omega)}
\end{eqnarray*}
for every $g\in L^{1}(\Omega, {\rm d}\Omega)$.
\end{remark}
As a summary, we put the $L^{1}-L^{\infty}$ bounds for the projection operator $P_{k}$ in \eqref{GH} in Table \ref{tab:example}.
\begin{table}
        \centering
         \caption{$L^{1}-L^{\infty}$ bounds for the projection operator $P_{k}$}
         \label{tab:example}
\begin{tabular}{cccc}
   \toprule
     $n$  & $m$   & $\alpha=1$  & $\alpha\in \mathbb{N} \backslash \{1\}$  \\
   \midrule
    $n\ge 4$ & $m\ge 1$ & $C(k+1)^{n+m-2}$& $C(k+1)^{n+m-2}$ \\
   $n\in \{2,3\}$ & $ m\ge 1$ & $C(k+1)^{n+m-2}$ & $C(k+1)^{m+2}$ \\
   \bottomrule
\end{tabular}
\end{table}

\subsection{Weighted $L^{2}-L^{2}$ estimates} \label{421}


This subsection is  devoted to deriving the weighted $L^{2}-L^{2}$ estimates for the projector $P_{k}$ in \eqref{int1}. The crucial ingredient is the weighted $L^{2}$-norm estimate for Jacobi polynomials. Here we  provide two different approaches. The first one is based on the Bernstein-type inequality  \eqref{bs} for Jacobi polynomials obtained in \cite{HS}. The second one is based on the connection formula and some explicit calculations. Our proofs are short, straightforward,  and completely different from the approach developed in \cite[Lemma 3.3]{GS} for $\Delta_{1}$ on $\mathbb{R}^{n+1}$. 

Recall that the angle function in \eqref{ps1} is defined by \begin{eqnarray*} \psi(\phi):=\sin^{\frac{2\alpha}{\alpha+1}}\phi=\left(r_{1}/\rho\right)^{2\alpha}.\end{eqnarray*}
\begin{lemma}\label{l2w} Suppose $\alpha \in \mathbb{N}$, $n, m\ge 2$ and $\displaystyle 0\le\beta<(\alpha+1)/(4\alpha)$,  then there exists  $C>0$ depending only on $\alpha, \beta, n, $ and $m$, such that, for every $g(\phi, \omega_{1}, \omega_{2})\in \mathcal{H}_{k}^{\alpha}$, we have
\begin{eqnarray*}
\left\|\psi^{-\beta}(\phi) g\right\|_{L^{2}(\Omega, {\rm d}\Omega)}\le C(k+1)^{\frac{1}{4}}\|g\|_{L^{2}(\Omega, {\rm d}\Omega)}.
\end{eqnarray*}
\end{lemma}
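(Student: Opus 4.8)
The plan is to reduce the statement to a one-dimensional weighted estimate for a single Jacobi polynomial and then to invoke the Bernstein-type inequality \eqref{bs}. First I would expand $g\in\mathcal{H}_k^\alpha$ in the orthogonal basis of Theorem \ref{th1}, writing
\[
g=\sum_{\ell,j}\sum_{p,q}c_{\ell,j,p,q}\,h_{k,\ell,j}(\phi)\,Y_{\ell,p}(\omega_1)\,Y_{j,q}(\omega_2),
\]
the sum running over admissible $(\ell,j)$ with $\widetilde{k}\in\mathbb{N}_0$. Since the weight $\psi^{-\beta}(\phi)$ depends only on $\phi$ while the $Y_{\ell,p},Y_{j,q}$ are orthonormal, the sphere integrals produce Kronecker deltas and all cross terms vanish in both $\|\psi^{-\beta}g\|_{L^2(\Omega,{\rm d}\Omega)}^2$ and $\|g\|_{L^2(\Omega,{\rm d}\Omega)}^2$; moreover for fixed $k$ and $(\ell,j)$ the index $\widetilde{k}$ is uniquely determined, so only the diagonal survives. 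By Lemma \ref{hw1} the unweighted $\phi$-norm of $h_{k,\ell,j}$ is exactly $B_{\widetilde{k}}^{\mu,\gamma-1}$. Hence it suffices to prove, uniformly over admissible $(\ell,j)$,
\[
\int_0^{\pi/2}\psi^{-2\beta}(\phi)\,h_{k,\ell,j}(\phi)^2\,\sin^{\frac{n-2}{\alpha+1}+1}\phi\,\cos^{m-1}\phi\,{\rm d}\phi\le C(k+1)^{1/2}\,B_{\widetilde{k}}^{\mu,\gamma-1}.
\]

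Next I would rewrite the left-hand side. Substituting the definitions of $h_{k,\ell,j}$, $\psi$, $\mu$, $\gamma$ and collecting the powers of $\sin\phi$ and $\cos\phi$, this integral becomes the off-diagonal Jacobi integral
\[
W:=\int_0^{\pi/2}(\sin\phi)^{2\mu+1-\frac{4\alpha\beta}{\alpha+1}}(\cos\phi)^{2\gamma-1}\left(P_{\widetilde{k}}^{(\mu,\gamma-1)}(\cos 2\phi)\right)^2{\rm d}\phi,
\]
where the exponent of $\sin\phi$ is lowered by $4\alpha\beta/(\alpha+1)$ relative to the natural orthogonality weight of $P_{\widetilde{k}}^{(\mu,\gamma-1)}$. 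This off-diagonal feature is the heart of the matter: the weight no longer matches the polynomial, so the orthogonality relation \eqref{n1} does not apply directly and a genuine pointwise control of the Jacobi polynomial, uniform in all parameters, is required. This is the main obstacle, and it is precisely why a uniform bound such as \eqref{bs}—rather than a classical asymptotic valid only for fixed parameters—is indispensable.

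To obtain that control I would apply \eqref{bs} with $x=\cos 2\phi$, degree $n=\widetilde{k}$, and Jacobi parameters $\mu$ and $\gamma-1$. Using $(1-x^2)^{1/4}=(2\sin\phi\cos\phi)^{1/2}$ together with the definition \eqref{g32} of $g_n^{(\mu,\gamma-1)}$, this yields the uniform bound
\[
(\sin\phi)^{2\mu+1}(\cos\phi)^{2\gamma-1}\left(P_{\widetilde{k}}^{(\mu,\gamma-1)}(\cos 2\phi)\right)^2\le C\,N_{\widetilde{k}}^{-1}\,(2\widetilde{k}+\mu+\gamma)^{-1/2},
\]
where $N_{\widetilde{k}}=\Gamma(\widetilde{k}+1)\Gamma(\widetilde{k}+\mu+\gamma)/\bigl(\Gamma(\widetilde{k}+\mu+1)\Gamma(\widetilde{k}+\gamma)\bigr)$. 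Factoring $(\sin\phi)^{-4\alpha\beta/(\alpha+1)}$ out of $W$ and inserting this bound leaves the residual integral $\int_0^{\pi/2}(\sin\phi)^{-4\alpha\beta/(\alpha+1)}{\rm d}\phi$, which converges exactly when $4\alpha\beta/(\alpha+1)<1$, i.e.\ under the hypothesis $\beta<(\alpha+1)/(4\alpha)$. This is where the restriction on $\beta$ enters.

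Finally I would recombine constants. From \eqref{bab} one reads off $N_{\widetilde{k}}^{-1}=2(2\widetilde{k}+\mu+\gamma)\,B_{\widetilde{k}}^{\mu,\gamma-1}$, so the previous two displays give $W\le C(2\widetilde{k}+\mu+\gamma)^{1/2}B_{\widetilde{k}}^{\mu,\gamma-1}$. A direct computation with $\widetilde{k}=\frac{k-\ell-j(\alpha+1)}{2(\alpha+1)}$, $\mu=\frac{n-2+2\ell}{2(\alpha+1)}$ and $\gamma=j+m/2$ shows that the $\ell$- and $j$-dependence cancels, leaving $2\widetilde{k}+\mu+\gamma=\frac{2k+n-2}{2(\alpha+1)}+\frac{m}{2}\le C(k+1)$. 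Hence $W\le C(k+1)^{1/2}B_{\widetilde{k}}^{\mu,\gamma-1}$; summing over $(\ell,j,p,q)$ and taking square roots yields the claimed factor $(k+1)^{1/4}$. As an alternative to \eqref{bs}, I would mention the route via the connection formula, which expands $P_{\widetilde{k}}^{(\mu,\gamma-1)}$ against the system orthogonal for the shifted weight and reduces $W$ to an explicitly summable series of $B$-constants, giving the same exponent.
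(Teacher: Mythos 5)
Your proposal is correct and follows essentially the same route as the paper's proof: reduce by orthogonality of the basis (which persists under the $\phi$-dependent weight since $\widetilde{k}$ is determined by $(k,\ell,j)$) to a one-dimensional weighted Jacobi integral, change variables to $x=\cos 2\phi$, apply the Bernstein-type inequality \eqref{bs} to $g_{\widetilde{k}}^{(\mu,\gamma-1)}$, and observe that the residual integral converges precisely under $\beta<(\alpha+1)/(4\alpha)$ while $2\widetilde{k}+\mu+\gamma\le C(k+1)$. The bookkeeping with $N_{\widetilde{k}}$ and $B_{\widetilde{k}}^{\mu,\gamma-1}$ matches the paper's computation \eqref{is1} exactly.
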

\begin{proof} For a fixed integer $k\ge0$,  let
\begin{equation}\label{o1}
 \begin{split}
  &\bigg\{b_{k,\ell,j}h_{k,\ell, j}(\phi)Y_{\ell, p}(\omega_{1})Y_{j, q}(\omega_{2})\bigg|1\le p\le d_{\ell}(n), 1\le q\le d_{j}(m), \\ & 0\le \ell, j\le k, \, {\rm and} \,\, \ell+(\alpha+1)j\equiv k\left({\rm mod}(2\alpha+2)\right)\bigg\}  
\end{split}   
\end{equation}
be an orthonormal basis for $\mathcal{H}_{k}^{\alpha}\subset L^{2}(\Omega, {\rm d}\Omega)$. Note that \eqref{o1} is also an orthogonal set in $L^{2}(\Omega, \psi^{-2\beta}(\phi) {\rm d}\Omega)$. 
Thus, it is sufficient to show that there exists a constant $C>0$ independent of $k$ such that
\begin{equation}\label{g2}
    \begin{split}
        &\int_{0}^{\frac{\pi}{2}}|b_{k,\ell,j}h_{k,\ell, j}(\phi)|^{2}\left(\sin\phi\right)^{\frac{n-1+\alpha-4\alpha\beta}{\alpha+1}}\left(\cos\phi\right)^{m-1} {\rm d}\phi\\
=&\,\int_{0}^{\frac{\pi}{2}}b_{k,\ell,j}^{2}
\left(P_{\frac{k-\ell-j(\alpha+1)}{2(\alpha+1)}}^{\left(\frac{n-2+2\ell}{2(\alpha+1)}, j+\frac{m}{2}-1\right)}(\cos 2\phi)\right)^{2}\\&\times\left(\sin\phi\right)^{\frac{n-1+\alpha+2\ell-4\alpha\beta}{\alpha+1}}\left(\cos\phi\right)^{m-1+2j} {\rm d}\phi
\\ \le&\, C(k+1)^{\frac{1}{2}},
    \end{split}
\end{equation}
for  $\displaystyle 0\le\beta<(\alpha+1)/(4\alpha)$. 

On the other hand, we have
\begin{eqnarray}\label{is1}\\
      &&\int_{0}^{\frac{\pi}{2}}b_{k,\ell,j}^{2}
\left(P_{\frac{k-\ell-j(\alpha+1)}{2(\alpha+1)}}^{\left(\frac{n-2+2\ell}{2(\alpha+1)}, j+\frac{m}{2}-1\right)}(\cos 2\phi)\right)^{2}\left(\sin\phi\right)^{\frac{n-1+\alpha+2\ell-4\alpha\beta}{\alpha+1}}\left(\cos\phi\right)^{m-1+2j} {\rm d}\phi \nonumber\\
&=& \frac{1}{2}\int_{-1}^{1}b_{k,\ell,j}^{2}
\left|P_{\frac{k-\ell-j(\alpha+1)}{2(\alpha+1)}}^{\left(\frac{n-2+2\ell}{2(\alpha+1)}, j+\frac{m}{2}-1\right)}(x)\right|^{2}\left(\frac{1-x}{2}\right)^{\frac{n-2+2\ell-4\alpha\beta}{2(\alpha+1)}}\left(\frac{1+x}{2}\right)^{\frac{m}{2}-1+j} {\rm d}x \nonumber
\\&=& \frac{\sqrt{2}}{4}\left(\frac{k-\ell-j(\alpha+1)}{\alpha+1}+\frac{n-2+2\ell}{2(\alpha+1)}+j+\frac{m}{2}\right) \nonumber
\\ &&\times \int_{-1}^{1}\left|(1-x^{2})^{\frac{1}{4}}
g_{\frac{k-\ell-j(\alpha+1)}{2(\alpha+1)}}^{\left(\frac{n-2+2\ell}{2(\alpha+1)}, j+\frac{m}{2}-1\right)}(x)\right|^{2}\left(\frac{1-x}{2}\right)^{-\frac{2\alpha\beta}{\alpha+1}-\frac{1}{2}}\left(\frac{1+x}{2}\right)^{-\frac{1}{2}}{\rm d}x \nonumber\\
&\le&  C(k+1)^{\frac{1}{2}}\int_{-1}^{1}\left(\frac{1-x}{2}\right)^{-\frac{2\alpha\beta}{\alpha+1}-\frac{1}{2}}\left(\frac{1+x}{2}\right)^{-\frac{1}{2}}{\rm d}x \nonumber\\
&\le & C(k+1)^{\frac{1}{2}}, \nonumber
\end{eqnarray}
where $g(x)$ is defined in \eqref{g32} and in the third step we used the  the Bernstein-type estimate \eqref{bs}. 
The last step  is  guaranteed by the condition $0\le \beta<(\alpha+1)/(4\alpha)$.
This completes the proof.
\end{proof}
\begin{remark} It is seen that if $\alpha\in \mathbb{N}$, then $1/2\le (\alpha+1)/(4\alpha)$ if and only if $\alpha=1$. We will show in the subsequent sections that the estimate in the above Lemma \ref{l2w} is already enough to completely remove the degenerate weight $\psi$ in \eqref{pot1} for the cases where $\alpha=1$.  It will lead to interesting applications in the analysis of $H$-type groups.
\end{remark}

Now, we start the second approach for the weighted  $L^{2}-L^{2}$ estimate for general  $\alpha\in \mathbb{N}$. Although the following explicit formulas on weighted $L^{2}$-norms of Jacobi polynomials are already known by experts, we provide a brief proof for the sake of completeness and to ensure self-containment.
\begin{lemma} Assume $\gamma, \alpha, \beta>-1$. Then we have
\begin{equation}\label{id1}
    \begin{split}
        I_{n}^{(\gamma, \alpha; \beta)}:=&\int_{0}^{\pi/2}\left(P_{n}^{(\gamma,\beta)}(\cos 2\theta)\right)^{2}(\sin \theta)^{2\alpha+1}\,(\cos \theta)^{2\beta+1}\, {\rm d}\theta\\
        =& \left(\frac{(\beta+1)_{n}}{(\alpha+\beta+2)_{n}}\right)^{2}\sum_{k=0}^{n}\left(\frac{\alpha+\beta+2k+1}{\alpha+\beta+1}\right.\\
   &\times \left.\frac{(\gamma-\alpha)_{n-k}(\alpha+\beta+1)_{k}(\beta+\gamma+n+1)_{k}}{(n-k)!(\beta+1)_{k}(\alpha+\beta+n+2)_{k}}\right)^{2}\\
   &\times \frac{1}{2(2k+\alpha+\beta+1)}\frac{\Gamma(k+\alpha+1)\Gamma(k+\beta+1)}{\Gamma(k+\alpha+\beta+1)k!}.
    \end{split}
\end{equation} 
In particular when $\gamma>0$ and $\beta>-1$,  it holds that
\begin{equation}\label{a-1}
    \begin{split}
        I_{n}^{(\gamma, \gamma-1; \beta)}=&\, \int_{0}^{\pi/2}\left(P_{n}^{(\gamma,\beta)}(\cos 2\theta)\right)^{2}(\sin \theta)^{2\gamma-1}(\cos \theta)^{2\beta+1}\,{\rm d}\theta\\
        =&\, \frac{\Gamma(\gamma+n+1)\Gamma(\beta+n+1)}{2\gamma\Gamma(\gamma+\beta+n+1)n!}.
    \end{split}
\end{equation} 
\end{lemma}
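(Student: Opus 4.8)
The plan is to reduce both integrals to weighted $L^{2}$-pairings of Jacobi polynomials on $[-1,1]$ and then exploit orthogonality. First I would substitute $x=\cos 2\theta$, under which $\sin^{2}\theta=(1-x)/2$, $\cos^{2}\theta=(1+x)/2$ and $(\sin\theta)^{2\alpha+1}(\cos\theta)^{2\beta+1}\,{\rm d}\theta=-\tfrac14\big(\tfrac{1-x}{2}\big)^{\alpha}\big(\tfrac{1+x}{2}\big)^{\beta}\,{\rm d}x$. Since $\theta:0\to\pi/2$ corresponds to $x:1\to-1$, this turns the left-hand side of \eqref{id1} into
\[
I_{n}^{(\gamma,\alpha;\beta)}=\frac14\int_{-1}^{1}\big(P_{n}^{(\gamma,\beta)}(x)\big)^{2}\Big(\tfrac{1-x}{2}\Big)^{\alpha}\Big(\tfrac{1+x}{2}\Big)^{\beta}\,{\rm d}x .
\]
The essential point is that $(1-x)^{\alpha}(1+x)^{\beta}$ is the orthogonality weight of the family $\{P_{k}^{(\alpha,\beta)}\}_{k}$, which shares the second parameter $\beta$ with $P_{n}^{(\gamma,\beta)}$ but differs in the first parameter when $\gamma\neq\alpha$.

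For the general identity \eqref{id1} I would invoke the classical connection formula expanding $P_{n}^{(\gamma,\beta)}$ in this orthogonal basis, $P_{n}^{(\gamma,\beta)}(x)=\sum_{k=0}^{n}c_{n,k}\,P_{k}^{(\alpha,\beta)}(x)$, whose coefficients are known explicitly from the theory of Jacobi polynomials; they are precisely the bracketed factor in \eqref{id1}, namely $c_{n,k}=\frac{(\beta+1)_{n}}{(\alpha+\beta+2)_{n}}\cdot\frac{\alpha+\beta+2k+1}{\alpha+\beta+1}\cdot\frac{(\gamma-\alpha)_{n-k}(\alpha+\beta+1)_{k}(\beta+\gamma+n+1)_{k}}{(n-k)!\,(\beta+1)_{k}(\alpha+\beta+n+2)_{k}}$. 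As a consistency check, at $k=n$ this collapses to the ratio of leading coefficients $(\gamma+\beta+n+1)_{n}/(\alpha+\beta+n+1)_{n}$, as it must. Inserting the expansion, squaring and integrating, the off-diagonal terms vanish by the orthogonality \eqref{or1}, so only the diagonal survives; each surviving term contributes $c_{n,k}^{2}$ times $\frac14\int_{-1}^{1}(P_{k}^{(\alpha,\beta)})^{2}\big(\tfrac{1-x}{2}\big)^{\alpha}\big(\tfrac{1+x}{2}\big)^{\beta}{\rm d}x$, which by \eqref{or1} equals exactly $B_{k}^{\alpha,\beta}$ of \eqref{bab}, i.e. the last line of \eqref{id1}. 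Pulling the common factor $\big((\beta+1)_{n}/(\alpha+\beta+2)_{n}\big)^{2}$ outside the sum reproduces \eqref{id1} verbatim.

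For the special case \eqref{a-1} one has $\alpha=\gamma-1$, so $(\gamma-\alpha)_{n-k}=(n-k)!$ and the series becomes terminating hypergeometric; rather than summing it I would argue directly through the Rodrigues formula, which I expect to be the cleanest route. Write $J_{n}:=\int_{-1}^{1}(1-x)^{\gamma-1}(1+x)^{\beta}\big(P_{n}^{(\gamma,\beta)}(x)\big)^{2}{\rm d}x$ and substitute the Rodrigues formula $(1-x)^{\gamma}(1+x)^{\beta}P_{n}^{(\gamma,\beta)}(x)=\frac{(-1)^{n}}{2^{n}n!}\frac{{\rm d}^{n}}{{\rm d}x^{n}}\big[(1-x)^{n+\gamma}(1+x)^{n+\beta}\big]$ into one of the two factors. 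Integrating by parts $n$ times transfers the derivatives onto $P_{n}^{(\gamma,\beta)}(x)/(1-x)$. The key observation is that, expanded about $x=1$, the non-singular part of this quotient is a polynomial of degree $n-1$ and is annihilated by $\frac{{\rm d}^{n}}{{\rm d}x^{n}}$, so only the pole survives: $\frac{{\rm d}^{n}}{{\rm d}x^{n}}\big[P_{n}^{(\gamma,\beta)}(x)/(1-x)\big]=P_{n}^{(\gamma,\beta)}(1)\,n!\,(1-x)^{-n-1}$. Using $P_{n}^{(\gamma,\beta)}(1)=(\gamma+1)_{n}/n!$ and the Beta integral $\int_{-1}^{1}(1-x)^{\gamma-1}(1+x)^{n+\beta}{\rm d}x=2^{\gamma+\beta+n}\frac{\Gamma(\gamma)\Gamma(n+\beta+1)}{\Gamma(n+\gamma+\beta+1)}$ gives $J_{n}=\frac{2^{\gamma+\beta}}{\gamma}\frac{\Gamma(n+\gamma+1)\Gamma(n+\beta+1)}{\Gamma(n+\gamma+\beta+1)n!}$; the prefactor $\tfrac14\,2^{-(\gamma-1+\beta)}$ from the change of variables then yields \eqref{a-1}.

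The main obstacle is twofold. For the general formula it is quoting and normalizing the connection coefficients $c_{n,k}$ correctly, which I would pin down by the $k=n$ leading-coefficient check above and by comparison with the $\alpha=0$ Euclidean case. For the special case it is justifying that the boundary terms in the repeated integration by parts vanish: after moving $s$ derivatives, the factor $\frac{{\rm d}^{\,n-1-s}}{{\rm d}x^{\,n-1-s}}[(1-x)^{n+\gamma}(1+x)^{n+\beta}]$ carries $(1-x)^{\gamma+1+s}$ near $x=1$, which exactly cancels the pole $(1-x)^{-(s+1)}$ of $\frac{{\rm d}^{s}}{{\rm d}x^{s}}\big[P_{n}^{(\gamma,\beta)}/(1-x)\big]$ to leave $(1-x)^{\gamma}\to0$, using $\gamma>0$ (and $\beta>-1$ at $x=-1$). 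Both points are routine once set up; the conceptual content lies entirely in the Parseval/orthogonality reduction and the Rodrigues degree argument.
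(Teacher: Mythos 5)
Your proof of the general identity \eqref{id1} is exactly the paper's argument: substitute $x=\cos 2\theta$, expand $P_{n}^{(\gamma,\beta)}$ via the connection formula of \cite[Theorem 7.1.3]{aar} in the basis $\{P_{k}^{(\alpha,\beta)}\}$, and kill the off-diagonal terms with the orthogonality relation \eqref{or1}; your $k=n$ leading-coefficient sanity check on $c_{n,k}$ is a nice touch but the route is the same. For the special case \eqref{a-1} you diverge: the paper simply cites the tabulated integral in \cite[\S 16.4, (6)]{bate} after a change of variables, whereas you rederive it from the Rodrigues formula, $n$-fold integration by parts, the observation that $\tfrac{{\rm d}^{n}}{{\rm d}x^{n}}\bigl[P_{n}^{(\gamma,\beta)}(x)/(1-x)\bigr]=P_{n}^{(\gamma,\beta)}(1)\,n!\,(1-x)^{-n-1}$, and a Beta integral. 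I checked the constants: $P_{n}^{(\gamma,\beta)}(1)=(\gamma+1)_{n}/n!$, the Beta integral, and the prefactor $\tfrac14\,2^{-(\gamma-1+\beta)}$ combine to give exactly $\Gamma(\gamma+n+1)\Gamma(\beta+n+1)/\bigl(2\gamma\,\Gamma(\gamma+\beta+n+1)\,n!\bigr)$, and your accounting of the boundary terms (the $(1-x)^{\gamma+1+s}$ decay of the $(n-1-s)$-th derivative of the Rodrigues kernel cancelling the order-$(s+1)$ pole, leaving $(1-x)^{\gamma}\to 0$ under $\gamma>0$, with $\beta>-1$ handling $x=-1$) is correct. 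Your version buys self-containedness at the cost of a page of computation; the paper's buys brevity at the cost of an external table reference. Both are sound.
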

\begin{proof}  By the connection formula for Jacobi polynomials in \cite[Theorem 7.1.3]{aar}, i.e.
\begin{equation*}
\begin{split}
    P_{n}^{(\gamma, \beta)}(x)=&\frac{(\beta+1)_{n}}{(\alpha+\beta+2)_{n}}\sum_{k=0}^{n}\frac{\alpha+\beta+2k+1}{\alpha+\beta+1}\\
   &\times \frac{(\gamma-\alpha)_{n-k}(\alpha+\beta+1)_{k}(\beta+\gamma+n+1)_{k}}{(n-k)!(\beta+1)_{k}(\alpha+\beta+n+2)_{k}}
   P_{k}^{(\alpha, \beta)}(x),
\end{split}    
\end{equation*}  
and the orthogonal condition \eqref{or1} (see also \eqref{bab}), we obtain the first explicit formula \eqref{id1}. The other formula for $ I_{n}^{(\gamma, \gamma-1; \beta)}$  is obtained simply by changing variables in \cite[\S 16.4, (6)]{bate}.
\end{proof}
The following bound  is important for the second estimate.
\begin{lemma} \label{kl1} Let $\gamma>0,\beta>-1$ and  $\gamma-1<\alpha<\gamma$. Then we have 
\begin{equation*}
     \frac{I_{n}^{(\gamma, \alpha; \beta)}}{B_{n}^{\gamma, \beta}}\le \left(\frac{1}{\gamma}(2n+\gamma+\beta+1)\right)^{\gamma-\alpha},
\end{equation*}
where $B_{n}^{\gamma, \beta}$ and $I_{n}^{(\gamma, \alpha; \beta)}$  are defined in \eqref{bab} and \eqref{id1}, respectively.
\end{lemma}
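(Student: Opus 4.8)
The plan is to read the inequality as an interpolation statement between the two endpoints $\alpha=\gamma$ and $\alpha=\gamma-1$, at which both sides coincide. First I would note that the denominator is itself an instance of the numerator: comparing \eqref{bab} and \eqref{id1} shows that $B_n^{\gamma,\beta}=I_n^{(\gamma,\gamma;\beta)}$, i.e.\ the value of the integral defining $I_n^{(\gamma,\alpha;\beta)}$ at $\alpha=\gamma$. Hence at $\alpha=\gamma$ the ratio equals $1$ while the exponent $\gamma-\alpha$ vanishes, so both sides of the claim equal $1$. At the opposite endpoint $\alpha=\gamma-1$, the closed form \eqref{a-1} for $I_n^{(\gamma,\gamma-1;\beta)}$ together with the explicit $B_n^{\gamma,\beta}$ from \eqref{bab} gives the ratio $(2n+\gamma+\beta+1)/\gamma$ after all the Gamma factors cancel; since there $\gamma-\alpha=1$, the two sides again agree. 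These coincidences at the endpoints are the signal that a single convexity argument interpolates between them.

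The key step is to show that $\alpha\mapsto I_n^{(\gamma,\alpha;\beta)}$ is log-convex. Writing
\[
I_n^{(\gamma,\alpha;\beta)}=\int_0^{\pi/2} g(\theta)\,(\sin\theta)^{2\alpha}\,{\rm d}\theta,\qquad g(\theta):=\left(P_n^{(\gamma,\beta)}(\cos 2\theta)\right)^2(\sin\theta)(\cos\theta)^{2\beta+1}\ge 0,
\]
the integrand depends on $\alpha$ only through $(\sin\theta)^{2\alpha}=e^{2\alpha\log\sin\theta}$, so the map is a Laplace-type transform of the positive measure $g(\theta)\,{\rm d}\theta$. For $\lambda\in[0,1]$ and admissible $\alpha_0,\alpha_1$, factoring $(\sin\theta)^{2(\lambda\alpha_0+(1-\lambda)\alpha_1)}=\big((\sin\theta)^{2\alpha_0}\big)^{\lambda}\big((\sin\theta)^{2\alpha_1}\big)^{1-\lambda}$ and applying H\"older's inequality with conjugate exponents $1/\lambda$ and $1/(1-\lambda)$ to the nonnegative integrand yields
\[
I_n^{(\gamma,\lambda\alpha_0+(1-\lambda)\alpha_1;\beta)}\le\big(I_n^{(\gamma,\alpha_0;\beta)}\big)^{\lambda}\big(I_n^{(\gamma,\alpha_1;\beta)}\big)^{1-\lambda},
\]
which is exactly log-convexity in $\alpha$. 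All three integrals are finite for $\alpha\in[\gamma-1,\gamma]$: the most singular factor near $\theta=0$ is $(\sin\theta)^{2\gamma-1}$, integrable precisely when $\gamma>0$, while $(\cos\theta)^{2\beta+1}$ is integrable near $\theta=\pi/2$ when $\beta>-1$.

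Finally I would specialize to the interpolation point. For $\alpha\in(\gamma-1,\gamma)$ set $\lambda=\gamma-\alpha\in(0,1)$, so that $\alpha=\lambda(\gamma-1)+(1-\lambda)\gamma$. Log-convexity then gives
\[
I_n^{(\gamma,\alpha;\beta)}\le\big(I_n^{(\gamma,\gamma-1;\beta)}\big)^{\gamma-\alpha}\big(I_n^{(\gamma,\gamma;\beta)}\big)^{1-(\gamma-\alpha)},
\]
and dividing by $B_n^{\gamma,\beta}=I_n^{(\gamma,\gamma;\beta)}$ collapses the right-hand side to $\big(I_n^{(\gamma,\gamma-1;\beta)}/B_n^{\gamma,\beta}\big)^{\gamma-\alpha}=\big((2n+\gamma+\beta+1)/\gamma\big)^{\gamma-\alpha}$, which is the asserted bound.

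I expect the only genuine content to be the recognition of log-convexity; once that is in place the endpoint bookkeeping is purely mechanical, driven by the closed forms already recorded in \eqref{bab} and \eqref{a-1}. The mildest subtlety is the finiteness of the integrals on the whole closed interval $[\gamma-1,\gamma]$, which is exactly why the hypotheses $\gamma>0$, $\beta>-1$, and $\gamma-1<\alpha$ are imposed.
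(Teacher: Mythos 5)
Your proposal is correct and is essentially the paper's own argument: the authors also prove the bound by applying H\"older's inequality with conjugate exponents $1/(\gamma-\alpha)$ and $1/(1-(\gamma-\alpha))$ to the weight $(\sin\theta)^{2\alpha}$ against the positive density $\left(P_n^{(\gamma,\beta)}(\cos 2\theta)\right)^2(\sin\theta)(\cos\theta)^{2\beta+1}$, normalized so that the $\alpha=\gamma$ endpoint integral is $1$, and then evaluate the $\alpha=\gamma-1$ endpoint via \eqref{bab} and \eqref{a-1}. Your log-convexity packaging and their probability-measure packaging are the same computation, so there is nothing substantive to add.
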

\begin{proof} Denote $ {\rm d}\mu(\theta)=\left(P_{n}^{(\gamma,\beta)}(\cos 2\theta)\right)^{2}(\sin \theta)^{2\gamma+1}(\cos \theta)^{2\beta+1}/B_{n}^{\gamma, \beta} \,{\rm d}\theta$. Obviously we have $\int_{0}^{\pi/2} {\rm d}\mu(\theta)
=1$. 

By H\"older's inequality, we get
    \begin{equation*}
    \begin{split}
        \frac{I_{n}^{(\gamma, \alpha; \beta)}}{B_{n}^{\gamma, \beta}} & = \frac{1}{B_{n}^{\gamma, \beta}}\int_{0}^{\pi/2} \left(P_{n}^{(\gamma,\beta)}(\cos 2\theta)\right)^{2}(\sin \theta)^{2\alpha+1}(\cos \theta)^{2\beta+1}\,{\rm d}\theta\\
        & = \int_{0}^{\pi/2}1\cdot\frac{1}{(\sin \theta)^{2(\gamma-\alpha)}}\,{\rm d}\mu(\theta)\\
        & \le \left(\int_{0}^{\pi/2}\frac{1}{(\sin \theta)^{2}}\,{\rm d}\mu(\theta)\right)^{\gamma-\alpha}\\
        & =\left( \frac{I_{n}^{(\gamma, \gamma-1; \beta)}}{B_{n}^{\gamma, \beta}}\right)^{\gamma-\alpha}\\
        & = \left((2n+\gamma+\beta+1)/\gamma \right)^{\gamma-\alpha}.
    \end{split}
\end{equation*} 
The last step follows from the explicit constants in Eqs.\,\eqref{bab} and \eqref{a-1}.
\end{proof}
\begin{remark}
By Hilb's approximation for Jacobi polynomials (see e.g. \cite[Theorem 8.21.12]{or}), 
the asymptotic behaviour of $I_{n}^{(\gamma, \alpha; \beta)}$ can be obtained  (see \cite[p.1064]{adgp}),
 \begin{equation*}
\begin{split}
   I_{n}^{(\gamma, \alpha; \beta)}=\binom{n+\gamma}{n}^{2}\frac{\Gamma(2(\gamma-\alpha)-1)\Gamma(1+\alpha)}{2\Gamma(\gamma-\alpha)^{2}\Gamma(2\gamma-\alpha)}N^{-2(\alpha+1)}+\mathcal{O}(n^{\gamma-\alpha-\frac{3}{2}}), 
\end{split}         
\end{equation*}
where $N=n+(\gamma+\beta+1)/2$. It suggests that the optimal estimate  for $\gamma-1<\alpha<\gamma-1/2$ in Lemma \ref{kl1}  should be 
\begin{equation*}
     \frac{I_{n}^{(\gamma, \alpha; \beta)}}{B_{n}^{\gamma, \beta}}\le C \left(2n+\gamma+\beta+1\right)^{2(\gamma-\alpha)-1},
\end{equation*}
with a constant $C$ only depending  on $\gamma-\alpha$. Unfortunately, it is currently unclear to the authors how to prove this.  
\end{remark}
\begin{remark} For $\alpha, \beta>0$, the following relations hold 
\begin{equation*}
    \frac{I_{n-1}^{(\gamma+1, \alpha+1; \beta+1)}}{B_{n-1}^{\gamma+1, \beta+1}}\le \frac{I_{n}^{(\gamma, \alpha; \beta)}}
    {B_{n}^{\gamma, \beta}}, \qquad \frac{I_{n-1}^{(\gamma+2, \alpha+2; \beta)}}{{B_{n-1}^{\gamma+2, \beta}}}\le \frac{I_{n}^{(\gamma, \alpha; \beta)}}{B_{n}^{\gamma, \beta}}.
\end{equation*}
The proof is similar as the subsequent \eqref{rat1}, hence we omit it.    
\end{remark}

The cases with $\gamma=0$ are not included  in Lemma \ref{kl1}, because $I_{n}^{(0, -1; \beta)}$ may not converge. Thanks to the bound given in \eqref{bou1}, we still have bounds for the following  cases.
\begin{lemma} \label{dxsa1}  Suppose $\gamma,\beta\in \mathbb{N}_{0}$ and  $\gamma-1<\alpha<\gamma$. For any given $0<\varepsilon<1$, 
 there exists $C>0$ only depending on $\varepsilon$ and $\gamma-\alpha$  such that 
\begin{equation*}
     \frac{I_{n}^{(\gamma, \alpha; \beta)}}{B_{n}^{\gamma, \beta}}\le C \left(2n+\gamma+\beta+1\right)^{\frac{\gamma-\alpha}{1-\varepsilon}},
\end{equation*}
where $B_{n}^{\gamma, \beta}$ and $I_{n}^{(\gamma, \alpha; \beta)}$  are defined in \eqref{bab} and \eqref{id1}, respectively.
\end{lemma}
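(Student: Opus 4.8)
The plan is to reuse the probability-measure device from the proof of Lemma~\ref{kl1}, but to interpolate against a \emph{regularized} endpoint so as to sidestep the divergence of $I_n^{(\gamma,\gamma-1;\beta)}$ that occurs when $\gamma=0$. First I would set ${\rm d}\mu(\theta)=\bigl(P_n^{(\gamma,\beta)}(\cos 2\theta)\bigr)^2(\sin\theta)^{2\gamma+1}(\cos\theta)^{2\beta+1}/B_n^{\gamma,\beta}\,{\rm d}\theta$, which is a probability measure by \eqref{bab}, so that the quantity to estimate is
\[
\frac{I_n^{(\gamma,\alpha;\beta)}}{B_n^{\gamma,\beta}}=\int_0^{\pi/2}(\sin\theta)^{-2(\gamma-\alpha)}\,{\rm d}\mu(\theta).
\]
Then I would fix $\varepsilon$ small enough that $\gamma-\alpha<1-\varepsilon$ (permissible since $0<\gamma-\alpha<1$) and apply Hölder's inequality on this probability space with exponent $p=(1-\varepsilon)/(\gamma-\alpha)>1$. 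This gives
\[
\frac{I_n^{(\gamma,\alpha;\beta)}}{B_n^{\gamma,\beta}}\le\left(\int_0^{\pi/2}(\sin\theta)^{-2(1-\varepsilon)}\,{\rm d}\mu(\theta)\right)^{\!(\gamma-\alpha)/(1-\varepsilon)}=\left(\frac{I_n^{(\gamma,\gamma-1+\varepsilon;\beta)}}{B_n^{\gamma,\beta}}\right)^{\!(\gamma-\alpha)/(1-\varepsilon)},
\]
so everything reduces to a uniform bound on the regularized ratio $I_n^{(\gamma,\gamma-1+\varepsilon;\beta)}/B_n^{\gamma,\beta}$, whose middle index $\gamma-1+\varepsilon$ is now strictly larger than $\gamma-1$.

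The decisive step is where the hypothesis $\gamma,\beta\in\mathbb{N}_0$ enters through \eqref{bou1}. I would rearrange $|g_n^{(\gamma,\beta)}(\cos2\theta)|\le 1$ from \eqref{g32}--\eqref{bou1} into the pointwise majorization $\bigl(P_n^{(\gamma,\beta)}(\cos2\theta)\bigr)^2\le \frac{\Gamma(n+\gamma+1)\Gamma(n+\beta+1)}{\Gamma(n+1)\Gamma(n+\gamma+\beta+1)}(\sin\theta)^{-2\gamma}(\cos\theta)^{-2\beta}$, and insert it into the integral defining $I_n^{(\gamma,\gamma-1+\varepsilon;\beta)}$. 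This cancels the Jacobi polynomial and leaves the elementary integral $\int_0^{\pi/2}(\sin\theta)^{2\varepsilon-1}\cos\theta\,{\rm d}\theta=\tfrac{1}{2\varepsilon}$, which converges precisely because $\varepsilon>0$. Combining with the explicit value of $B_n^{\gamma,\beta}$ in \eqref{bab}, where the Gamma factors cancel exactly, yields
\[
\frac{I_n^{(\gamma,\gamma-1+\varepsilon;\beta)}}{B_n^{\gamma,\beta}}\le\frac{2n+\gamma+\beta+1}{\varepsilon}.
\]
Feeding this back into the Hölder estimate gives $I_n^{(\gamma,\alpha;\beta)}/B_n^{\gamma,\beta}\le \varepsilon^{-(\gamma-\alpha)/(1-\varepsilon)}(2n+\gamma+\beta+1)^{(\gamma-\alpha)/(1-\varepsilon)}$, i.e. the asserted bound with $C=\varepsilon^{-(\gamma-\alpha)/(1-\varepsilon)}$, depending only on $\varepsilon$ and $\gamma-\alpha$.

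The only point needing care is the range of $\varepsilon$: requiring the conjugate exponent $p>1$ forces $\varepsilon<1-(\gamma-\alpha)$, whereas the statement is claimed for every $\varepsilon\in(0,1)$. This will not be a real obstacle, since the target exponent $(\gamma-\alpha)/(1-\varepsilon)$ is increasing in $\varepsilon$ and $2n+\gamma+\beta+1\ge1$, so a bound established for one admissible $\varepsilon_0$ upgrades at once to every larger $\varepsilon$ with the same constant; alternatively, in the complementary range one may use the cruder estimate obtained by applying \eqref{bou1} directly to $I_n^{(\gamma,\alpha;\beta)}$, which produces exponent $1\le(\gamma-\alpha)/(1-\varepsilon)$. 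The genuine difficulty that the lemma overcomes is the loss of the endpoint $I_n^{(\gamma,\gamma-1;\beta)}$ of \eqref{a-1} when $\gamma=0$; replacing that endpoint by its $\varepsilon$-regularization and controlling the latter through the uniform bound \eqref{bou1} is exactly what restores a finite estimate, at the cost of the slightly enlarged exponent $(\gamma-\alpha)/(1-\varepsilon)$.
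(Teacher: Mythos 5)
Your proposal is correct and follows essentially the same route as the paper's proof: the same probability measure ${\rm d}\mu$, the same H\"older step with exponent $(1-\varepsilon)/(\gamma-\alpha)$, and the same use of the uniform bound \eqref{bou1} to control the regularized integral, yielding the identical constant $\varepsilon^{-(\gamma-\alpha)/(1-\varepsilon)}$. Your closing remark on the admissible range of $\varepsilon$ (that a bound for one $\varepsilon_0$ upgrades to all larger $\varepsilon$ since the base $2n+\gamma+\beta+1\ge 1$) correctly handles a point the paper's proof leaves implicit.
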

\begin{proof} 
By \eqref{bou1}, we have  \begin{equation}\label{b1c1}
    \frac{1}{B_{n}^{\gamma, \beta}}  \left(P_{n}^{(\gamma,\beta)}(\cos 2\theta)\right)^{2}(\sin \theta)^{2\gamma}(\cos \theta)^{2\beta} \le 2(2n+\gamma+\beta+1),
\end{equation}
for all $\gamma,\beta\in \mathbb{N}_{0}$ and $\theta\in [0,\pi/2]$.

Similarly as was done in Lemma \ref{kl1}, by H\"older inequality, we have
\begin{equation*}
    \begin{split}
        \frac{I_{n}^{(\gamma, \alpha; \beta)}}{B_{n}^{\gamma, \beta}} &= \frac{1}{B_{n}^{\gamma, \beta}}\int_{0}^{\pi/2} \left(P_{n}^{(\gamma,\beta)}(\cos 2\theta)\right)^{2}(\sin \theta)^{2\alpha+1}(\cos \theta)^{2\beta+1}\,{\rm d}\theta\\
        & = \int_{0}^{\pi/2}1\cdot \frac{1}{(\sin \theta)^{2(\gamma-\alpha)}}\, {\rm d}\mu(\theta)\\
        & \le \left(\int_{0}^{\pi/2}\frac{1}{(\sin \theta)^{2(1-\varepsilon)}}\, {\rm d}\mu(\theta)\right)^{\frac{\gamma-\alpha}{1-\varepsilon}}\\
        & \le  \left(2(2n+\gamma+\beta+1)\int_{0}^{\pi/2}\frac{\sin \theta\cos\theta}{(\sin \theta)^{2(1-\varepsilon)}} \,{\rm d}\theta\right)^{\frac{\gamma-\alpha}{1-\varepsilon}}\\
        & \le  C \left(2n+\gamma+\beta+1\right)^{\frac{\gamma-\alpha}{1-\varepsilon}},      
    \end{split} 
\end{equation*}
where we have used \eqref{b1c1} in the fourth step.
\end{proof}

Now, we can go back to the Grushin setting.
\begin{lemma} \label{nwf1} Assume $\displaystyle 0\le\beta<1/2$. When $n\geq 3$ and $m\ge 2$, there exists  $C>0$ depending only on $\alpha, \beta, n, $ and $m$, such that, for every $g\in \mathcal{H}_{k}^{\alpha}$,
\begin{eqnarray*}
\left\|\psi^{-\beta}(\phi) g\right\|_{L^{2}(\Omega, {\rm d}\Omega)}\le C(k+1)^{\frac{\alpha}{2(\alpha+1)}}\|g\|_{L^{2}(\Omega, {\rm d}\Omega)}.
\end{eqnarray*}
When  $n=2$ and $m\ge 2$ is even, for a given $0<\varepsilon<1$, there exists  $C>0$ depending only on $\varepsilon, \alpha, \beta, n, $ and $m$, such that, for every $g\in \mathcal{H}_{k}^{\alpha}$,
\begin{eqnarray*}
\left\|\psi^{-\beta}(\phi) g\right\|_{L^{2}(\Omega, {\rm d}\Omega)}\le C(k+1)^{\frac{\alpha}{2(\alpha+1)(1-\varepsilon)}}\|g\|_{L^{2}(\Omega, {\rm d}\Omega)}.
\end{eqnarray*}
\end{lemma}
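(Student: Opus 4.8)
The plan is to run the argument of Lemma \ref{l2w} but to feed it the sharper weighted $L^2$-norm bounds of Lemmas \ref{kl1} and \ref{dxsa1} in place of the Bernstein-type estimate \eqref{bs}. First I would fix $k$ and expand $g$ in the orthonormal basis \eqref{o1} of $\mathcal{H}_k^\alpha$. Since the weight $\psi^{-2\beta}(\phi)$ depends on $\phi$ alone while the angular factors $Y_{\ell,p}(\omega_1)Y_{j,q}(\omega_2)$ stay orthonormal on $\mathbb{S}^{n-1}\times\mathbb{S}^{m-1}$, this family remains orthogonal in $L^2(\Omega,\psi^{-2\beta}(\phi)\,{\rm d}\Omega)$. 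Writing the expansion coefficients as $c_{\ell,j,p,q}$ so that $\sum|c_{\ell,j,p,q}|^2=\|g\|_{L^2(\Omega,{\rm d}\Omega)}^2$, orthogonality gives that $\|\psi^{-\beta}g\|^2$ equals $\sum|c_{\ell,j,p,q}|^2$ times the one-dimensional $\phi$-integrals; hence it suffices to bound each such integral uniformly in $\ell,j$ by $C(k+1)^{\alpha/(\alpha+1)}$ (resp. $C(k+1)^{\alpha/((\alpha+1)(1-\varepsilon))}$), and then take square roots.

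Second, I would evaluate this $\phi$-integral exactly as in \eqref{is1}: the substitution $x=\cos 2\phi$ turns it into $b_{k,\ell,j}^2$ times a weighted $L^2$-norm of $P_{\widetilde k}^{(\mu,\gamma-1)}$, that is, into the ratio $I_{\widetilde k}^{(\mu,\,B;\,\gamma-1)}/B_{\widetilde k}^{\mu,\gamma-1}$ with $A=\mu=\frac{n-2+2\ell}{2(\alpha+1)}$, $B=\mu-\frac{2\alpha\beta}{\alpha+1}$ and $C=\gamma-1=j+\tfrac m2-1$, using $b_{k,\ell,j}^2=1/B_{\widetilde k}^{\mu,\gamma-1}$ from \eqref{nc1}. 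The two identities I would record are $A-B=\frac{2\alpha\beta}{\alpha+1}\in(0,1)$ (from $0\le\beta<\tfrac12$ and $\alpha\ge1$), and, after telescoping, $2\widetilde k+\mu+\gamma=\frac{2k+Q-2}{2(\alpha+1)}$, whence $\frac1\mu(2\widetilde k+\mu+\gamma)=\frac{2k+Q-2}{n-2+2\ell}$.

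Third, for $n\ge3$ one has $\mu>0$ and $C=j+\tfrac m2-1>-1$, so Lemma \ref{kl1} applies (its hypothesis $\mu-1<B<\mu$ is precisely $A-B\in(0,1)$) and bounds the ratio by $\big(\frac{2k+Q-2}{n-2+2\ell}\big)^{2\alpha\beta/(\alpha+1)}$. Since $n-2+2\ell\ge1$, this is at most $C(k+1)^{2\alpha\beta/(\alpha+1)}\le C(k+1)^{\alpha/(\alpha+1)}$ because $\beta<\tfrac12$, which after the square root yields the first claim uniformly in $\ell,j$.

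Fourth, and this is the delicate point, when $n=2$ the value $\ell=0$ forces $\mu=0$, so Lemma \ref{kl1} degenerates and I would instead invoke Lemma \ref{dxsa1}, which is valid exactly for integer parameters $\mu=0$ and $C=j+\tfrac m2-1$. The requirement $C\in\mathbb{N}_0$ is what forces $m$ to be even, and $-B=\frac{2\alpha\beta}{\alpha+1}\in(0,1)$ keeps $-1<B<0$ as needed. For $\ell=0$ it gives the ratio $\le C(2\widetilde k+C+1)^{2\alpha\beta/((\alpha+1)(1-\varepsilon))}\le C(k+1)^{2\alpha\beta/((\alpha+1)(1-\varepsilon))}$, since $2\widetilde k+C+1=\frac{2k+Q-2}{2(\alpha+1)}$; for $\ell\ge1$ one still has $\mu=\frac{\ell}{\alpha+1}>0$ and the non-degenerate Lemma \ref{kl1} gives a strictly smaller exponent. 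Thus the $\ell=0$ term dominates and the second claim follows. The main obstacle is precisely this $\mu=0$ degeneracy: one must trade the clean Hölder argument of Lemma \ref{kl1} for the uniform pointwise bound \eqref{bou1} underlying Lemma \ref{dxsa1}, accepting both the $1/(1-\varepsilon)$ loss and the parity restriction on $m$; everything else reduces to routine bookkeeping of the Jacobi parameters.
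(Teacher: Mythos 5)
Your proposal is correct and follows the paper's own proof essentially verbatim: reduce to a uniform bound on the one-dimensional $\phi$-integral exactly as in the proof of Lemma \ref{l2w}, rewrite it as the ratio $I_{\widetilde k}^{(\mu,\,\mu-\frac{2\alpha\beta}{\alpha+1};\,\gamma-1)}/B_{\widetilde k}^{\mu,\gamma-1}$, and invoke Lemma \ref{kl1} when $\mu>0$ (i.e.\ $n\ge 3$, using $n-2+2\ell\ge 1$) and Lemma \ref{dxsa1} for the degenerate case $n=2$, $\ell=0$, $\mu=0$ (whence the parity restriction on $m$ and the $1/(1-\varepsilon)$ loss). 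Your parameter bookkeeping, including the identity $2\widetilde k+\mu+\gamma=\frac{2k+Q-2}{2(\alpha+1)}$ and the observation that the $\ell=0$ term dominates when $n=2$, matches and in fact makes explicit what the paper leaves terse.
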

\begin{proof} (1) From the proof of Lemma \ref{l2w}, it is sufficient to show that
\begin{equation*}\label{tp1}
    \begin{split}
        &\int_{0}^{\frac{\pi}{2}}|b_{k,\ell,j}h_{k,\ell, j}(\phi)|^{2}\left(\sin\phi\right)^{\frac{n-1+\alpha-4\alpha\beta}{\alpha+1}}\left(\cos\phi\right)^{m-1} {\rm d}\phi\\
=&\, \int_{0}^{\frac{\pi}{2}}b_{k,\ell,j}^{2}
\left(P_{\frac{k-\ell-j(\alpha+1)}{2(\alpha+1)}}^{\left(\frac{n-2+2\ell}{2(\alpha+1)}, j+\frac{m}{2}-1\right)}(\cos 2\phi)\right)^{2}\\&\times\left(\sin\phi\right)^{\frac{n-1+\alpha+2\ell-4\alpha\beta}{\alpha+1}}\left(\cos\phi\right)^{m-1+2j}\, {\rm d}\phi
\\ \le& \,C(k+1)^{\frac{\alpha}{\alpha+1}},
    \end{split}
\end{equation*}
for $\displaystyle 0\le\beta<1/2$. When $n>2$ and $m\ge 2$, it follows from Lemma \ref{kl1} and the fact $\frac{n-2+2\ell}{2(\alpha+1)}\ge \frac{1}{2(\alpha+1)}$.

(2) When $n=2$ and $m\ge 2$ even, from the proof of the first part, it is sufficient to consider the cases when the first parameter of the Jacobi polynomials is $0$. In this case, we have
\begin{equation*}
    \begin{split}
        &\int_{0}^{\frac{\pi}{2}}|b_{k,0,j}h_{k,0, j}(\phi)|^{2}\left(\sin\phi\right)^{\frac{1+\alpha-4\alpha\beta}{\alpha+1}}\left(\cos\phi\right)^{m-1} {\rm d}\phi\\
=&\,\int_{0}^{\frac{\pi}{2}}b_{k,0,j}^{2}
\left(P_{\frac{k-j(\alpha+1)}{2(\alpha+1)}}^{\left(0,  j+\frac{m}{2}-1\right)}(\cos 2\phi)\right)^{2}\\&\times\left(\sin\phi\right)^{\frac{1+\alpha+2\ell-4\alpha\beta}{\alpha+1}}\left(\cos\phi\right)^{m-1+2j} {\rm d}\phi
\\ \le&\, C(k+1)^{\frac{\alpha}{(\alpha+1)(1-\varepsilon)}},
    \end{split}
\end{equation*}
for $\displaystyle 0\le\beta<1/2$. The last step follows from Lemma \ref{dxsa1}.
\end{proof}

Following \cite[Theorem 4.1]{GS}, the weighted $L^{2}-L^{2}$ estimate for the projector $P_{k}$ in \eqref{int1} is derived from Lemma \ref{nwf1} (for $n\ge 3$) and Lemma \ref{l2w} (for $n=2$ and $\alpha=1$).
\begin{theorem}\label{wl2} Suppose $\displaystyle 0\le\beta<1/2$. When {\rm (1)} $n\ge 3$, $m\ge 2$ and $\alpha\in \mathbb{N}$, or {\rm (2)} $n=2$, $m\ge 2$ and $\alpha=1$,  there exists  a constant $C>0$, depending  on $\alpha, \beta, n$ and $m$, such that for every $g\in L^{2}(\Omega, {\rm d}\Omega)$, the following holds
\begin{eqnarray}\label{l2w1}
\int_{\Omega}\left|\psi^{-\beta}(\phi) P_{k}\left(\psi^{-\beta}(\cdot)g\right)(\phi, \omega_{1}, \omega_{2})\right|^{2}{\rm d}\Omega\le C(k+1)^{\frac{2\alpha}{\alpha+1}}\int_{\Omega}|g|^{2}{\rm d}\Omega.
\end{eqnarray}
\end{theorem}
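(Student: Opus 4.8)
The plan is to realize the composite operator $T_{k}:=\psi^{-\beta}P_{k}\psi^{-\beta}$ as a positive self-adjoint operator on $L^{2}(\Omega,{\rm d}\Omega)$ and to reduce the desired $L^{2}$ operator-norm bound to a bound on its quadratic form, which is exactly what Lemmas \ref{nwf1} and \ref{l2w} control. First I would record that, since $0\le\beta<1/2$ and $n\ge2$, the multiplier $\psi^{-\beta}=\sin^{-2\alpha\beta/(\alpha+1)}\phi$ lies in $L^{2}(\Omega,{\rm d}\Omega)$ (the singularity at $\phi=0$ is integrable against ${\rm d}\Omega$ in this range of $\beta$). Consequently $\psi^{-\beta}g\in L^{1}(\Omega,{\rm d}\Omega)$ for every $g\in L^{2}$, the integral operator $P_{k}$ applied to $\psi^{-\beta}g$ is well defined and lands in the finite-dimensional space $\mathcal{H}_{k}^{\alpha}$, and $\psi^{-\beta}P_{k}(\psi^{-\beta}g)\in L^{2}$ by the very lemmas just cited; so the left-hand side of the theorem makes sense for all $g\in L^{2}$. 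All inner-product manipulations below are first carried out on the dense subspace of functions supported away from $\{\phi=0\}$ and then extended by density.

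The core computation exploits that $P_{k}$ is an orthogonal projection (self-adjoint and idempotent) and that multiplication by the real weight $\psi^{-\beta}$ is symmetric. Writing $h:=P_{k}(\psi^{-\beta}g)\in\mathcal{H}_{k}^{\alpha}$, I would evaluate the quadratic form
\[
\langle T_{k}g,g\rangle=\langle P_{k}(\psi^{-\beta}g),\psi^{-\beta}g\rangle=\big\|P_{k}(\psi^{-\beta}g)\big\|_{L^{2}}^{2}=\|h\|_{L^{2}}^{2},
\]
using the identity $\langle P_{k}u,u\rangle=\|P_{k}u\|^{2}$. Next, since $P_{k}h=h$ and $P_{k}$ is self-adjoint,
\[
\|h\|_{L^{2}}^{2}=\langle\psi^{-\beta}g,h\rangle=\langle g,\psi^{-\beta}h\rangle\le\|g\|_{L^{2}}\,\|\psi^{-\beta}h\|_{L^{2}}.
\]
Here I would invoke the key eigenspace estimate applied to $h\in\mathcal{H}_{k}^{\alpha}$: Lemma \ref{nwf1} when $n\ge3$, and Lemma \ref{l2w} (whose exponent $1/4$ equals $\alpha/(2(\alpha+1))$ precisely when $\alpha=1$) when $n=2$ and $\alpha=1$. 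In either admissible case this yields $\|\psi^{-\beta}h\|_{L^{2}}\le C(k+1)^{\alpha/(2(\alpha+1))}\|h\|_{L^{2}}$. Dividing the previous inequality by $\|h\|_{L^{2}}$ gives $\|h\|_{L^{2}}\le C(k+1)^{\alpha/(2(\alpha+1))}\|g\|_{L^{2}}$, and hence $\langle T_{k}g,g\rangle\le C^{2}(k+1)^{\alpha/(\alpha+1)}\|g\|_{L^{2}}^{2}$.

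Finally, since $T_{k}$ is a positive self-adjoint operator, its operator norm equals the supremum of $\langle T_{k}g,g\rangle$ over unit vectors, so the quadratic-form bound upgrades to $\|T_{k}\|_{L^{2}\to L^{2}}\le C(k+1)^{\alpha/(\alpha+1)}$. Applying this to $g$ and squaring yields
\[
\int_{\Omega}\big|\psi^{-\beta}(\phi)P_{k}(\psi^{-\beta}(\cdot)g)\big|^{2}\,{\rm d}\Omega=\|T_{k}g\|_{L^{2}}^{2}\le C(k+1)^{2\alpha/(\alpha+1)}\|g\|_{L^{2}}^{2},
\]
which is the asserted inequality \eqref{l2w1}. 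I expect the only delicate point to be the domain bookkeeping for the unbounded multiplier $\psi^{-\beta}$, namely justifying the self-adjointness/duality steps and the passage $\langle\psi^{-\beta}g,h\rangle=\langle g,\psi^{-\beta}h\rangle$; this is precisely what the density reduction in the first paragraph takes care of, while the genuine analytic content has already been absorbed into Lemmas \ref{nwf1} and \ref{l2w}.
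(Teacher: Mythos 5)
Your argument is correct and follows essentially the same route as the paper: both reduce the bound to the eigenspace estimates of Lemmas \ref{l2w} and \ref{nwf1} applied to elements of $\mathcal{H}_{k}^{\alpha}$, combined with a standard duality argument (the paper writes $\psi^{-\beta}P_{k}\psi^{-\beta}=T_{\beta,k}T_{\beta,k}^{*}$ and uses $\|TT^{*}\|=\|T\|^{2}$, while you bound the quadratic form of the same positive self-adjoint operator — equivalent bookkeeping). No gaps.
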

\begin{proof}
Let
\begin{eqnarray}
T_{\beta, k}(g)(\phi, \omega_{1}, \omega_{2})=\psi^{-\beta}(\phi) P_{k}(g)(\phi, \omega_{1}, \omega_{2}).
\end{eqnarray}
Then by Lemma \ref{l2w} and Lemma \ref{nwf1}, it holds
\begin{equation} \label{l2w2}
    \begin{split}
        \|T_{\beta, k}(g)\|_{L^{2}(\Omega, {\rm d}\Omega)}\le&\, C(k+1)^{\frac{\alpha}{2(\alpha+1)}}\|P_{k}(g)\|_{L^{2}(\Omega, {\rm d}\Omega)}\\
\le&\,
 C(k+1)^{\frac{\alpha}{2(\alpha+1)}}\|g\|_{L^{2}(\Omega, {\rm d}\Omega)}
    \end{split}
\end{equation}
for every $g\in L^{2}(\Omega, {\rm d}\Omega)$.

On the other hand, the adjoint operator of $T_{\beta,k}$ is given by
\begin{eqnarray*}
T_{\beta, k}^{*}(g)(\phi, \omega_{1}, \omega_{2})=P_{k}\left(\psi^{-\beta}(\cdot)g\right)(\phi,\omega_{1}, \omega_{2}).
\end{eqnarray*}
Since $P_{k}$ is a projection operator, we can write
\begin{eqnarray*}
\psi^{-\beta}(\phi) P_{k}\left(\psi^{-\beta}(\cdot)g\right)(\phi,\omega_{1}, \omega_{2})&=&\psi^{-\beta}(\phi) P_{k}\circ P_{k}(\psi^{-\beta}(\cdot)g)(\phi,\omega_{1}, \omega_{2})\\
&=&T_{\beta, k}\circ T_{\beta,k}^{*}(g)(\phi,\omega_{1}, \omega_{2}).
\end{eqnarray*}
By \eqref{l2w2} and a standard duality argument for the norm of $T_{\beta,k}^{*}$, we obtain the desired   estimate \eqref{l2w1}.
\end{proof}
\begin{remark}
    Similar bounds to those in \eqref{l2w1} can be derived using Lemma \ref{nwf1} for the cases when  $n=2$, $m\ge 2$ even and $\alpha \in \mathbb{N} \backslash \{1\}$. Furthermore, by choosing an appropriate $\varepsilon$, it is still possible to obtain a bound $C(k+1)^{b}$ with $0<b<1$. That bound is sufficient to subsequently derive a Carleman-type estimate. 
\end{remark}
The $L^{p}-L^{q}$ estimates for the projector $P_{k}$ in \eqref{int1} follow from Theorem \ref{wl1} ($L^{1}-L^{\infty}$ estimates) and Theorem \ref{wl2} ($L^{2}-L^{2}$ estimates) via standard complex interpolation. We omit the proof.

\begin{theorem} \label{pq1} When {\rm (1)} $n\ge 4$, $m\ge 2$ and $\alpha\in \mathbb{N}$, or {\rm (2)} $n\in \{2, 3\}$, $m\ge 2$ and $\alpha=1$, let 
\begin{equation*}
    p=\frac{2\left(n+m-2+\frac{1}{\alpha+1}\right)}{n+m-2+\frac{2}{\alpha+1}}  \quad \mbox{and}\quad  q=\frac{2\left(n+m-2+\frac{1}{\alpha+1}\right)}{n+m-2}.
\end{equation*}
If $\displaystyle 0\le \beta<1/q$, there exists a constant $C>0$ only depending  on $\alpha, \beta, n$ and $m$, such that for $g\in L^{p}(\Omega, {\rm d}\Omega)$,
\begin{equation}\label{pq}
    \begin{split}
        &\left\|\psi^{-\beta}(\phi) P_{k}\left(\psi^{-\beta}(\cdot)g\right)\right\|_{L^{q}(\Omega, {\rm d}\Omega)}
        \le C(k+1)^{\frac{n+m-2}{n+m-2+\frac{1}{\alpha+1}}}\|g\|_{L^{p}(\Omega, {\rm d}\Omega)}.\\
    \end{split}
\end{equation}\\
\end{theorem}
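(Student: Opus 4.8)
The plan is to derive \eqref{pq} by Stein's theorem on interpolation of an analytic family of operators, taking the unweighted $L^{1}$--$L^{\infty}$ bound of Theorem \ref{wl1} at one endpoint and the weighted $L^{2}$--$L^{2}$ bound of Theorem \ref{wl2} at the other. Note first that the hypotheses in cases (1) and (2) are exactly the intersection of the admissibility ranges of those two theorems, so both endpoint estimates are at our disposal. Set
\[
\theta:=\frac{n+m-2}{\,n+m-2+\frac{1}{\alpha+1}\,},
\]
chosen so that $\frac{1}{q}=\frac{\theta}{2}$ and $\frac{1}{p}=1-\frac{\theta}{2}$ reproduce exactly the exponents in the statement. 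For $z$ in the closed strip $0\le\operatorname{Re}z\le1$ I would define the family
\[
T_{z}g:=\psi^{-\frac{\beta}{\theta}z}(\phi)\,P_{k}\!\left(\psi^{-\frac{\beta}{\theta}z}(\cdot)\,g\right),
\]
where $P_{k}$ is the integral operator \eqref{int1} with kernel $G_{k}$ in \eqref{af1}. At $z=\theta$ one has $T_{\theta}g=\psi^{-\beta}P_{k}(\psi^{-\beta}g)$, precisely the operator in \eqref{pq}. Since $G_{k}$ is a finite sum of products of Jacobi functions and reproducing kernels, for simple functions $g,h$ the pairing $\langle T_{z}g,h\rangle$ is entire in $z$, and its $z$-dependence enters only through $\psi^{-\frac{\beta}{\theta}z}$, whose modulus $\psi^{-\frac{\beta}{\theta}\operatorname{Re}z}$ is independent of $\operatorname{Im}z$; hence the family is admissible in the sense of Stein.

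Next I would verify the two endpoint bounds. On $\operatorname{Re}z=0$, writing $z=it$ and using $|\psi^{-\frac{\beta}{\theta}it}|\equiv1$ on $[0,\pi/2]$, Theorem \ref{wl1} gives
\[
\|T_{it}g\|_{L^{\infty}(\Omega,\mathrm d\Omega)}=\bigl\|P_{k}(\psi^{-\frac{\beta}{\theta}it}g)\bigr\|_{L^{\infty}}\le C(k+1)^{n+m-2}\|g\|_{L^{1}(\Omega,\mathrm d\Omega)}=:M_{0}.
\]
On $\operatorname{Re}z=1$, writing $z=1+it$ and factoring $\psi^{-\frac{\beta}{\theta}(1+it)}=\psi^{-\frac{\beta}{\theta}}\,\psi^{-\frac{\beta}{\theta}it}$, the unimodular factor $\psi^{-\frac{\beta}{\theta}it}$ alters neither the $L^{2}$ norm of the input nor that of the output, so Theorem \ref{wl2} applied with weight exponent $\beta/\theta$ yields
\[
\|T_{1+it}g\|_{L^{2}(\Omega,\mathrm d\Omega)}\le C(k+1)^{\frac{\alpha}{\alpha+1}}\|g\|_{L^{2}(\Omega,\mathrm d\Omega)}=:M_{1}.
\]
Here the hypothesis $0\le\beta<1/q=\theta/2$ is exactly what guarantees $\beta/\theta<1/2$, the admissibility condition of Theorem \ref{wl2}, and it is also what makes the defining integrals of $T_{z}$ converge against the weight in $\mathrm d\Omega$.

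Finally, Stein interpolation gives $T_{\theta}\colon L^{p}(\Omega,\mathrm d\Omega)\to L^{q}(\Omega,\mathrm d\Omega)$ with norm at most $M_{0}^{1-\theta}M_{1}^{\theta}$, the exponents $p,q$ being those recorded above. The resulting power of $(k+1)$ is
\[
(n+m-2)(1-\theta)+\tfrac{\alpha}{\alpha+1}\,\theta=\frac{(n+m-2)\bigl(\frac{1}{\alpha+1}+\frac{\alpha}{\alpha+1}\bigr)}{\,n+m-2+\frac{1}{\alpha+1}\,}=\frac{n+m-2}{\,n+m-2+\frac{1}{\alpha+1}\,},
\]
where the decisive simplification is $\frac{1}{\alpha+1}+\frac{\alpha}{\alpha+1}=1$; this matches the exponent in \eqref{pq}, with implicit constant independent of $k$. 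I expect the only delicate point to be the weight-power bookkeeping: the family must carry $\psi^{-\frac{\beta}{\theta}z}$ rather than $\psi^{-\beta z}$, so that the interpolation point lands exactly on the weight $\psi^{-\beta}$ while the $\operatorname{Re}z=1$ endpoint demands weight exponent $\beta/\theta$, whose admissibility is precisely the stated range $\beta<1/q$. Once this is arranged, the remaining verifications (analyticity, admissible growth, and convergence of the kernel integrals) are routine.
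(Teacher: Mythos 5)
Your proposal is correct and follows exactly the route the paper indicates: the paper states that Theorem \ref{pq1} follows from the $L^{1}$--$L^{\infty}$ bound of Theorem \ref{wl1} and the weighted $L^{2}$--$L^{2}$ bound of Theorem \ref{wl2} ``via standard complex interpolation'' and omits the details, which you have supplied accurately (the choice of $\theta$, the analytic family $T_{z}$ with weight $\psi^{-\frac{\beta}{\theta}z}$, the verification that $\beta<1/q$ is exactly the admissibility condition $\beta/\theta<1/2$ at the $\operatorname{Re}z=1$ edge, and the exponent arithmetic all check out).
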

\subsection{Tighter estimates on $\mathbb{R}^{n+1}$}  In this subsection, we present a  tighter weighted $L^{2}-L^{2}$  estimate for the the projector $P_{k}$ in \eqref{int1} over $\mathbb{R}^{n+1}$.  Our proof is based on  explicit formulas of weighted $L^{2}$-norms of Gegenbauer polynomials.  In particular,  we improve the estimate for the two step Grushin operator $\Delta_{1}$ obtained in \cite[Theorem 4.1]{GS}. The distinction between even and odd dimensions there seems not essential and is now removed.

The main result  of this subsection is as follows.
\begin{theorem} \label{wr1} Assume  $n\ge 2$.
There exists a constant $C>0$ depending only on $n$, $\alpha$ and $\beta$, such that, for every $g\in L^{2}(\Omega, {\rm d} \Omega)$, it holds 
\begin{equation} \label{erf1}
\begin{split}
     \int_{\Omega}\left|(\sin\phi)^{-\beta} P_{k}\left( \sin^{-\beta}(\cdot)g\right)(\phi, \omega)  \right|^{2}{\rm d}\Omega
     \le
    \left\{
  \begin{array}{ll} \displaystyle C\int_{\Omega}|g|^{2}\,{\rm d}\Omega, &0\le \beta<\frac{1}{2},\\
 \displaystyle C(k+1)^{2(2\beta-1)}\int_{\Omega}|g|^{2}\,{\rm d}\Omega, &\frac{1}{2}<\beta<1.
   \end{array}
\right. 
\end{split}  
\end{equation}
In particular, when $\alpha\in \mathbb{N} \backslash \{1\}$ and for $(\alpha+1)/4\alpha \le\beta<1/2$, there exists  a constant $C>0$, depending  on $\alpha, \beta$ and $ n$, such that 
\begin{equation}\label{fz1}
    \int_{\Omega}\left|\psi^{-\beta}(\phi) P_{k}\left(\psi^{-\beta}(\cdot)g\right)(\phi, \omega_{1}, \omega_{2})\right|^{2}\,{\rm d}\Omega\le C(k+1)^{\frac{2(\alpha-1)}{\alpha+1}}\int_{\Omega}|g|^{2}\,{\rm d}\Omega.
\end{equation}
\end{theorem}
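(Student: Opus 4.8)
The plan is to run the $TT^{*}$ duality scheme of Theorem~\ref{wl2}, but to exploit the fact that on $\mathbb{R}^{n+1}$ the weight $(\sin\phi)^{-\beta}$ is \emph{symmetric} in $\cos\phi$, which lets me reduce everything to the explicit norm formula \eqref{id1} rather than the lossy H\"older bound of Lemma~\ref{kl1}. First I would set $T_{\beta,k}(g)=(\sin\phi)^{-\beta}P_{k}(g)$; since $P_{k}^{2}=P_{k}$, the left-hand side of \eqref{erf1} equals $\|T_{\beta,k}T_{\beta,k}^{*}g\|_{L^{2}}^{2}\le\|T_{\beta,k}\|_{L^{2}\to L^{2}}^{4}\|g\|_{L^{2}}^{2}$. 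Because the weight depends only on $\phi$ while the orthogonal basis of $\mathcal{H}_{k}^{\alpha}(\mathbb{R}^{n+1})$ of Remark~\ref{res1} separates as $\rho^{k}(\sin\phi)^{\ell/(\alpha+1)}C^{(\lambda)}_{\nu}(\cos\phi)\,Y_{\ell,p}(\omega)$ with $\nu=\tfrac{k-\ell}{\alpha+1}$, $\lambda=\mu+\tfrac12$ and $\mu=\tfrac{2\ell+n-2}{2(\alpha+1)}$, orthogonality of spherical harmonics of distinct degrees gives $\|T_{\beta,k}\|^{2}=\sup_{\ell}R_{\nu}(\beta)$, where after $t=\cos\phi$
\[
R_{\nu}(\beta):=\frac{\int_{-1}^{1}\bigl(C^{(\lambda)}_{\nu}(t)\bigr)^{2}(1-t^{2})^{\lambda-\frac12-\beta}\,{\rm d}t}{\int_{-1}^{1}\bigl(C^{(\lambda)}_{\nu}(t)\bigr)^{2}(1-t^{2})^{\lambda-\frac12}\,{\rm d}t}.
\]
Thus \eqref{erf1} reduces to showing, uniformly in the admissible $\ell$, that $R_{\nu}(\beta)\le C$ for $0\le\beta<\tfrac12$ and $R_{\nu}(\beta)\le C(k+1)^{2\beta-1}$ for $\tfrac12<\beta<1$.

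Next I would remove the symmetry by the quadratic transformation $C^{(\lambda)}_{2\nu'}(t)\propto P^{(\mu,-1/2)}_{\nu'}(2t^{2}-1)$ and $C^{(\lambda)}_{2\nu'+1}(t)\propto t\,P^{(\mu,1/2)}_{\nu'}(2t^{2}-1)$. Writing $t=\cos\theta$ with $\theta\in[0,\pi/2]$ turns the even-degree integrals into $I^{(\mu,\,\mu-\beta;\,-1/2)}_{\nu'}$ and the odd-degree ones into $I^{(\mu,\,\mu-\beta;\,1/2)}_{\nu'}$, with the corresponding normalizers $B^{\mu,\mp1/2}_{\nu'}$ in the notation of \eqref{bab}. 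Hence $R_{\nu}(\beta)=I^{(\mu,\mu-\beta;\pm1/2)}_{\nu'}\big/B^{\mu,\pm1/2}_{\nu'}$, a ratio covered \emph{exactly} by the explicit formula \eqref{id1} with $\gamma=\mu$, $\alpha=\mu-\beta$ (so $0<\gamma-\alpha=\beta<1$) and second parameter $\pm\tfrac12>-1$.

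The decisive and hardest step is to extract the \emph{sharp} bound from \eqref{id1}. The crude H\"older estimate of Lemma~\ref{kl1} only yields $R_{\nu}(\beta)\le C(k+1)^{\beta}$, which is too lossy to give either target (for $0<\beta<1$ one has $\beta>2\beta-1$ and $\beta>0$). Instead I would insert the explicit Gamma-quotients of \eqref{id1}, together with the closed form \eqref{a-1}, and estimate the resulting finite sum by Stirling's formula: the partial sums are dominated by a convergent series when $\beta<\tfrac12$ and grow precisely like $(2\nu'+\mu+\tfrac12\pm\tfrac12)^{2\beta-1}\sim(k+1)^{2\beta-1}$ when $\tfrac12<\beta<1$. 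This realizes the exponent $2(\gamma-\alpha)-1$ anticipated in the remark after Lemma~\ref{kl1}, the special values $\pm\tfrac12$ of the second Jacobi parameter being exactly what makes the computation closed. The estimate must be uniform in $\mu$, which is automatic for $n\ge 3$ since then $\mu\ge\tfrac{1}{2(\alpha+1)}>0$; the single degenerate case $n=2,\ \ell=0$ (where $\mu=0$) I would treat separately through the pointwise bound \eqref{bou1}, exactly as in Lemma~\ref{dxsa1}. Since the whole argument is uniform over all $\mu\ge0$, no parity restriction on $n$ is needed, which is how the even/odd dichotomy of \cite{GS} gets removed.

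Finally, \eqref{fz1} is a direct corollary of \eqref{erf1}. By \eqref{ps1} one has $\psi(\phi)=\sin^{2\alpha/(\alpha+1)}\phi$, hence $\psi^{-\beta}=(\sin\phi)^{-\beta'}$ with $\beta'=\tfrac{2\alpha}{\alpha+1}\beta$. For $\alpha\in\mathbb{N}\setminus\{1\}$ and $\tfrac{\alpha+1}{4\alpha}\le\beta<\tfrac12$ one checks $\beta'\in[\tfrac12,\tfrac{\alpha}{\alpha+1})\subset[\tfrac12,1)$, so applying the second case of \eqref{erf1} with $\beta'$ in place of $\beta$ gives the bound $C(k+1)^{2(2\beta'-1)}$; since the exponent $2(2\beta'-1)$ is increasing in $\beta'$ and $\beta'<\tfrac{\alpha}{\alpha+1}$, it does not exceed $2\bigl(\tfrac{2\alpha}{\alpha+1}-1\bigr)=\tfrac{2(\alpha-1)}{\alpha+1}$, which is exactly the exponent in \eqref{fz1} (the boundary value $\beta'=\tfrac12$ only improves the estimate).
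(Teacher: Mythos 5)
Your reduction is sound and matches the paper's: the $TT^{*}$ duality, the restriction to the separated basis of Remark \ref{res1}, and the identification of the operator norm with the supremum over $\ell$ of the normalized weighted norms $R_{\nu}(\beta)$ of the Gegenbauer polynomials $C^{(\lambda)}_{\nu}$ with $\lambda=\frac{2\ell+n-2}{2(\alpha+1)}+\frac12$ all agree with what the paper does (its Lemma \ref{pqq}), and your derivation of \eqref{fz1} from the second case of \eqref{erf1} is exactly the paper's remark. The gap lies in what you yourself call the decisive step. You propose to extract the sharp bounds on $R_{\nu}(\beta)$ by inserting the Gamma quotients of \eqref{id1} (with second Jacobi parameter $\pm\frac12$) and estimating the resulting finite sum by Stirling's formula, asserting that uniformity in $\mu$ is ``automatic for $n\ge3$ since $\mu\ge\frac{1}{2(\alpha+1)}>0$''. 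That addresses only the lower end of the range of $\mu$. The real uniformity problem is at the other end: as $\ell$ runs up to $k$, the parameter $\mu=\frac{2\ell+n-2}{2(\alpha+1)}$ grows like $k/(\alpha+1)$ while the degree $\frac{k-\ell}{\alpha+1}$ can be small, so the bound must be controlled in a genuine two-parameter regime where fixed-parameter asymptotics in the degree say nothing. This is precisely the estimate the authors state they do not know how to prove for general Jacobi parameters in the remark following Lemma \ref{kl1}; your plan amounts to asserting a special case of that open bound without an argument. (A sanity check: at degree $0$ the ratio $I_{0}^{(\gamma,\gamma-\beta;\pm1/2)}/B_{0}^{\gamma,\pm1/2}$ tends to $1$ as $\gamma\to\infty$ rather than growing like $\gamma^{2\beta-1}$, so the behaviour you posit, $(2\nu'+\mu+\cdots)^{2\beta-1}$, is not the actual two-parameter asymptotic and cannot simply be read off term by term.)

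What the paper actually does at this point --- and what your proposal is missing --- is Theorem \ref{iin}: the case $\ell=0$ is first settled at fixed $(\lambda,\mu)$ using the asymptotics of \cite{bg} (Theorem \ref{asym1}), and the bound is then propagated to all $\ell$ \emph{with the same constant} by showing that each term of the explicit sum \eqref{gexc} decreases under the simultaneous shift $(\lambda,\mu,j)\mapsto(\lambda+1,\mu+1,j-1)$, so that the normalized weighted norm is monotone along the family indexed by $\ell$. This monotonicity-plus-induction is the key idea that converts a fixed-parameter asymptotic into a bound uniform in $\ell$; without it, or a genuine substitute for it, your Stirling step does not close.
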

\begin{remark}  Eq.\,\eqref{fz1} is a direct corollary of the second case in Eq.\,\eqref{erf1}.
\end{remark}
\begin{remark} When $\alpha=1$, the first case in \eqref{erf1} recovers  the important estimate in \cite[Theorem 4.1]{GS} for even $n$. The estimate for odd $n$ there is now improved.
\end{remark}
Recall that for any  $k\in \mathbb{N}_{0}$, we have defined
\begin{equation}
    h_{k, \ell}(\phi)=b_{k,\ell}\sin^{\frac{\ell}{\alpha+1}}\phi\, C_{\frac{k-\ell}{\alpha+1}}^{\frac{2\ell+n-2}{2(\alpha+1)}+\frac{1}{2}}(\cos \phi), \quad 0\le \ell\le k,\;  \ell\equiv k\,({\rm mod}(\alpha+1)),   
\end{equation}
where $b_{k,\ell}$ is the normalization constant such that $\int_{0}^{\pi/2} \left(h_{k, \ell}(\phi)\right)^{2} \sin^{\frac{n-2}{\alpha+1}+1}\phi\, {\rm d}\phi=1$. Then the set 
\begin{equation*}
    \left\{h_{k,\ell}(\phi)Y_{\ell, j}(\omega)\,\arrowvert\, 0\le \ell\le k, \ell\equiv k\,({\rm mod}(\alpha+1)), 1\le j\le d_{\ell}\right\}
\end{equation*}
is an orthonormal basis for $\mathcal{H}_{k}^{\alpha}$ over $\partial B_{1}$, see Remark \ref{res1}. Similarly as done in Section \ref{421} as well as in \cite[\S 4]{GS}, it is seen that Theorem \ref{wr1} will follow if we can show,
\begin{lemma}\label{pqq} Assume  $n\ge 2$. There exists $C>0$ only depending on $\alpha, \beta$ and $n$, such that
    \begin{equation}\label{wl22}
    \int_{0}^{\pi}\left|h_{k, \ell}(\phi)\right|^{2} (\sin \phi)^{\frac{n-2}{\alpha+1}+1-2\beta}\, {\rm d}\phi
      \le
    \left\{
  \begin{array}{ll} \displaystyle C, &0\le \beta<\frac{1}{2},\\
 \displaystyle C(k+1)^{2\beta-1}, &\frac{1}{2}<\beta<1.
   \end{array}
\right. 
\end{equation}
\end{lemma}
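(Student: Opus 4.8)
The plan is to renormalize the left-hand side of \eqref{wl22} into a ratio of two weighted $L^{2}$-norms of a \emph{single} Gegenbauer polynomial, and then to evaluate that ratio by the explicit formulas of Section~\ref{421}. First I would substitute $x=\cos\phi$. Since $C^{(\lambda)}_{\widetilde k}$ has definite parity and both $\sin\phi$ and the weight $(\sin\phi)^{\frac{n-2}{\alpha+1}+1-2\beta}$ are symmetric about $\phi=\pi/2$, the integrand is even about $\pi/2$, so $\int_{0}^{\pi}=2\int_{0}^{\pi/2}$, which after $x=\cos\phi$ becomes an integral over $[-1,1]$. Writing $\widetilde k=\frac{k-\ell}{\alpha+1}$, $\lambda=\frac{2\ell+n-2}{2(\alpha+1)}+\frac12$ and $\gamma=\lambda-\frac12$, the power of $\sin\phi$ contributed by $h_{k,\ell}$ combines with $\frac{n-2}{\alpha+1}$ to give the exponent $2\gamma$, and the normalization that fixes $b_{k,\ell}$ cancels all $k$-independent constants. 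Thus the left-hand side of \eqref{wl22} equals $2R$, where
\[
R=\frac{\int_{-1}^{1}(1-x^{2})^{\gamma-\beta}\big[C^{(\lambda)}_{\widetilde k}(x)\big]^{2}\,dx}{\int_{-1}^{1}(1-x^{2})^{\gamma}\big[C^{(\lambda)}_{\widetilde k}(x)\big]^{2}\,dx},
\]
whose denominator is exactly the standard Gegenbauer norm, known in closed form. It therefore suffices to bound $R$ by $C$ for $0\le\beta<1/2$ and by $C(k+1)^{2\beta-1}$ for $1/2<\beta<1$, uniformly in $\ell$.

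Next I would pass to the Jacobi integrals already computed in Section~\ref{421}. Via the classical quadratic transformations $P^{(\gamma,\gamma)}_{2\kappa}(\cos\phi)\propto P^{(\gamma,-1/2)}_{\kappa}(\cos 2\phi)$ and $P^{(\gamma,\gamma)}_{2\kappa+1}(\cos\phi)\propto\cos\phi\,P^{(\gamma,1/2)}_{\kappa}(\cos 2\phi)$, together with \eqref{gen}, the numerator and denominator of $R$ turn into the quantities $I^{(\gamma,\gamma-\beta;\mp1/2)}_{\kappa}$ and $B^{\gamma,\mp1/2}_{\kappa}$ of \eqref{id1} and \eqref{bab}, with $\kappa=\lfloor\widetilde k/2\rfloor$ and the sign chosen according to the parity of $\widetilde k$. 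The transformation constants are identical in numerator and denominator and cancel, so in both the even and the odd case $R=I^{(\gamma,\gamma-\beta;\mp1/2)}_{\kappa}/B^{\gamma,\mp1/2}_{\kappa}$. This is precisely the step that treats the two parities on an equal footing and removes the even/odd-dimension dichotomy of \cite{GS}.

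Finally I would extract the sharp rate from the explicit series \eqref{id1}. Inserting the gamma-function expressions for the connection coefficients and the norms, the leading ($s=0$) term of $I^{(\gamma,\gamma-\beta;\mp1/2)}_{\kappa}/B^{\gamma,\mp1/2}_{\kappa}$ tends to the constant $2^{2\beta}$ (it pairs $(\lambda)_{\widetilde k}/(\lambda-\beta)_{\widetilde k}\sim\widetilde k^{\beta}$ against $\Gamma(\widetilde k+2\lambda-2\beta)/\Gamma(\widetilde k+2\lambda)\sim\widetilde k^{-2\beta}$), so $R$ stays bounded below, and the behaviour is dictated by how the remaining terms accumulate. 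For $0\le\beta<1/2$ the series is controlled by its first few terms and $R\le C$, whereas for $1/2<\beta<1$ the tail accumulates to produce exactly $R\le C(\kappa+1)^{2\beta-1}$; since $\kappa\le\widetilde k\le\frac{k}{\alpha+1}$ this gives $R\le C(k+1)^{2\beta-1}$.

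The hard part will be this last estimation of the explicit series with a constant uniform in $\ell$ (equivalently in $\gamma$ and $\widetilde k$ at fixed $k$). The crude H\"older bound of Lemma~\ref{kl1}, which yields only $R\le\big(\tfrac1\gamma(2\kappa+\gamma+1)\big)^{\beta}\sim C(k+1)^{\beta}$, is too weak on both ends: it is not bounded for $\beta<1/2$ and overshoots $(k+1)^{2\beta-1}$ for $\beta>1/2$. Hence the precise cancellation encoded in \eqref{id1} must be exploited, and it is the special value $\mp1/2$ of the second Jacobi parameter that makes the evaluation tractable. One boundary case must be isolated: when $\gamma=0$ (only $n=2$, $\ell=0$) the polynomial degenerates to a Legendre polynomial and the H\"older step fails; there I would instead split the integral into a bulk part and an endpoint part and apply the Bernstein-type estimate \eqref{bs} (equivalently \eqref{le} with \eqref{bou1}), which reproduces the same two rates $C$ and $C(\kappa+1)^{2\beta-1}$.
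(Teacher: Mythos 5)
Your reduction is sound and coincides with the paper's starting point: after using the parity of $C^{(\lambda)}_{\widetilde k}$ and the normalization $b_{k,\ell}$, the left-hand side of \eqref{wl22} is exactly the ratio $J^{(\lambda;\lambda-\beta)}_{\widetilde k}/J^{(\lambda;\lambda)}_{\widetilde k}$ of weighted $L^{2}$-norms of a single Gegenbauer polynomial with $\lambda=\frac{2\ell+n-2}{2(\alpha+1)}+\frac12$, i.e.\ the quantity the paper denotes $\bigl(b^{(\lambda)}_{j}\bigr)^{2}J^{(\lambda;\mu)}_{j}$ with $\mu=\lambda-\beta$. Your detour through the quadratic transformation to Jacobi polynomials with second parameter $\mp\tfrac12$ is correct but unnecessary, since the explicit formula \eqref{gexc} already treats both parities at once.

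The genuine gap is the step you yourself flag as ``the hard part'': the two rates must hold \emph{uniformly in $\ell$}, i.e.\ uniformly in $\gamma=\lambda-\tfrac12$, which ranges up to order $k/(\alpha+1)$. You correctly observe that the H\"older bound of Lemma~\ref{kl1} is too weak, but you then only assert that ``the precise cancellation encoded in \eqref{id1} must be exploited,'' that for $\beta<1/2$ ``the series is controlled by its first few terms,'' and that for $\beta>1/2$ ``the tail accumulates to produce exactly $(\kappa+1)^{2\beta-1}$.'' None of this is an argument: the number of terms in \eqref{id1} grows with $\kappa$, every coefficient depends on $\gamma$, and your leading-term computation $(\lambda)_{\widetilde k}/(\lambda-\beta)_{\widetilde k}\sim\widetilde k^{\beta}$ is an asymptotic for \emph{fixed} $\lambda$, which is precisely what fails when $\lambda\sim k$. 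The paper closes this gap by a different mechanism (Theorem~\ref{iin}): it establishes the base case $\ell=0$ from the known asymptotics \eqref{gexcc1} of $J^{(\lambda;\mu)}_{j}$ at fixed $(\lambda,\mu)$, and then proves the monotonicity
\begin{equation*}
\bigl(b^{(\lambda+1)}_{j-1}\bigr)^{2}J^{(\lambda+1;\mu+1)}_{j-1}\le\bigl(b^{(\lambda)}_{j}\bigr)^{2}J^{(\lambda;\mu)}_{j}
\end{equation*}
by showing the ratio of corresponding terms in the explicit series \eqref{gexc} equals $\frac{(j-2k)(j+2\mu-2k)}{j(j+2\lambda)}\le 1$; induction on $\ell$ then yields uniformity for free. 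Without this (or some substitute that genuinely sums the connection-coefficient series uniformly in $\gamma$), your proof does not go through. Your treatment of the $\gamma=0$ boundary case via Bernstein is a reasonable patch, but it sits inside the same unproved framework.
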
 

To establish \eqref{wl22}, we derive some estimates valid for  Gegenbauer polynomials. First, we
recall the following results in \cite[Theorem 5 and 6]{bg}.
\begin{theorem} \label{asym1}  Let $\lambda>0$ and $\mu >-1/2$. Then we have
\begin{equation} \label{gexc}
\begin{split}
     J_{j}^{(\lambda; \mu)}:= &\int_{0}^{\pi}\left|C_{j}^{(\lambda)}(\cos \theta)\right|^{2}(\sin \theta)^{2\mu}\,{\rm d}\theta\\
    =&\frac{\sqrt{\pi}\Gamma\left(\mu+\frac{1}{2}\right)}{\mu\Gamma(\mu+1)}\sum_{k=0}^{\lfloor\frac{j}{2}\rfloor}
    \frac{(\lambda)_{j-k}^{2}(\lambda-\mu)_{k}^{2}}{(\mu+1)_{j-k}^{2}(k!)^{2}}(j+\mu-2k)\frac{(2\mu)_{j-2k}}{(j-2k)!}.
\end{split}
\end{equation}
Furthermore, $J_{j}^{(\lambda; \mu)}$ satisfies the asymptotic estimate 
\begin{equation} \label{gexcc1}
    J_{j}^{(\lambda; \mu)}=
    \left\{
  \begin{array}{ll}
  \displaystyle  \frac{\sqrt{\pi}\Gamma(\mu+\frac{1}{2}-\lambda)}{2^{2\lambda-1}\Gamma(\lambda)^{2}\Gamma(\mu+1-\lambda)}
    j^{2\lambda-2}+\mathcal{O}(j^{\eta}), & \mu>\lambda-\frac{1}{2}, \\
 \displaystyle  \frac{\sqrt{\pi}\Gamma(\lambda-\mu-\frac{1}{2})\Gamma(\mu+\frac{1}{2})}{2^{2\lambda-1}\Gamma(\lambda)^{2}\Gamma(\lambda-\mu)\Gamma(2\lambda-\mu-\frac{1}{2})}
    j^{4\lambda-2\mu-3}+\mathcal{O}(j^{\eta}), & \mu<\lambda-\frac{1}{2}, 
  \end{array}
\right.   
\end{equation}
with \begin{equation*} \eta= \left\{
  \begin{array}{ll}\max(2\lambda-3, 4\lambda-2\mu-3), &\mu>\lambda-\frac{1}{2},\\
  \max(2\lambda-2, 4\lambda-2\mu-4), &\mu<\lambda-\frac{1}{2}.
   \end{array}
\right. 
  \end{equation*}
\end{theorem}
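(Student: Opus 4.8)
The plan is to prove the two assertions of the theorem separately: the closed-form evaluation \eqref{gexc} of $J_j^{(\lambda;\mu)}$ rests on an orthogonal expansion, whereas the asymptotics \eqref{gexcc1} require a Stirling-type analysis of the resulting finite sum. Although this is recorded in \cite{bg}, I would present a self-contained route as follows.

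\textbf{Step 1 (the explicit sum).} First I would pass to the algebraic variable $x=\cos\theta$, so that
\begin{equation*}
J_j^{(\lambda;\mu)}=\int_{-1}^{1}\bigl(C_j^{(\lambda)}(x)\bigr)^{2}(1-x^{2})^{\mu-\frac{1}{2}}\,{\rm d}x,
\end{equation*}
which identifies the weight $(\sin\theta)^{2\mu}\,{\rm d}\theta$ with the orthogonality weight of the Gegenbauer family $\{C_m^{(\mu)}\}_{m\ge0}$. The key input is the classical Gegenbauer connection formula (a specialization of the Jacobi connection formula \cite[Theorem 7.1.3]{aar} already used above),
\begin{equation*}
C_j^{(\lambda)}(x)=\sum_{k=0}^{\lfloor j/2\rfloor}\frac{\mu+j-2k}{\mu}\,\frac{(\lambda)_{j-k}\,(\lambda-\mu)_{k}}{(\mu+1)_{j-k}\,k!}\,C_{j-2k}^{(\mu)}(x).
\end{equation*}
Substituting this expansion, squaring, and integrating against $(1-x^{2})^{\mu-1/2}$, all cross terms vanish by orthogonality of the $C_{j-2k}^{(\mu)}$, leaving a sum of squared connection coefficients times the norms
\begin{equation*}
\int_{-1}^{1}\bigl(C_{m}^{(\mu)}(x)\bigr)^{2}(1-x^{2})^{\mu-\frac{1}{2}}\,{\rm d}x=\frac{\pi\,2^{1-2\mu}\,\Gamma(m+2\mu)}{(m+\mu)\,m!\,\Gamma(\mu)^{2}},\qquad m=j-2k.
\end{equation*}
Collecting the $k$-dependent factors reproduces the factor $(j+\mu-2k)\,(2\mu)_{j-2k}/(j-2k)!$ inside the sum (the square $(\mu+j-2k)^{2}$ of the connection coefficient cancels one power against the $(m+\mu)^{-1}$ in the norm), while the constant $\pi\,2^{1-2\mu}\Gamma(2\mu)/(\mu^{2}\Gamma(\mu)^{2})$ collapses to $\sqrt{\pi}\,\Gamma(\mu+\tfrac12)/(\mu\,\Gamma(\mu+1))$ via the Legendre duplication formula $\Gamma(2\mu)=\pi^{-1/2}2^{2\mu-1}\Gamma(\mu)\Gamma(\mu+\tfrac12)$. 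This is precisely \eqref{gexc}.

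\textbf{Step 2 (the asymptotics).} To obtain \eqref{gexcc1} I would apply Stirling's formula to every Pochhammer and Gamma factor in the general term of \eqref{gexc}, writing the $k$-th summand, for fixed large $j$, as a product of powers of $j$ and of $k$. The summand decays geometrically away from one endpoint of the $k$-range, and the location of the dominant contribution switches at $\mu=\lambda-\tfrac12$: for $\mu>\lambda-\tfrac12$ the terms with small $k$ dominate, the sum is comparable to its $k=0$ value, and one extracts the leading power $j^{2\lambda-2}$; for $\mu<\lambda-\tfrac12$ the terms near $k=\lfloor j/2\rfloor$ take over and produce $j^{4\lambda-2\mu-3}$. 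In each regime, after normalizing the summand by its leading factor one passes to the limit (dominated convergence on the convergent tail) to pin down the stated leading constant, and sums the next-order contributions to certify the error $\mathcal{O}(j^{\eta})$ with the indicated $\eta$.

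\textbf{Main obstacle.} Step 1 is essentially bookkeeping with standard identities. The genuine difficulty is the asymptotic analysis of Step 2: one must control the finite sum uniformly in $k$ so as to isolate the \emph{sharp} leading coefficient and simultaneously certify the error exponent $\eta$. This is delicate precisely near the transition $\mu=\lambda-\tfrac12$, where the two candidate leading powers coincide, and in the regime where $\eta$ sits only slightly below the leading exponent; a careful discrete Watson/Laplace-type argument together with monotonicity estimates on the summand is what makes this work.
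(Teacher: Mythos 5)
Your Step 1 is exactly the paper's route: the paper states Theorem \ref{asym1} as a quotation of \cite[Theorems 5 and 6]{bg}, and in the remark immediately following it indicates precisely your derivation --- expand $C_j^{(\lambda)}$ in the $C_{j-2k}^{(\mu)}$ basis via Szeg\"o's connection formula, kill the cross terms by orthogonality against $(1-x^2)^{\mu-1/2}$, and collapse the constant with the Legendre duplication formula. Your bookkeeping there checks out, including the cancellation of $(\mu+j-2k)^2$ against $(j-2k+\mu)^{-1}$ and the reduction of $\pi 2^{1-2\mu}\Gamma(2\mu)/(\mu^2\Gamma(\mu)^2)$ to $\sqrt{\pi}\,\Gamma(\mu+\frac12)/(\mu\Gamma(\mu+1))$.

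For the asymptotics \eqref{gexcc1} the paper offers no proof at all (it defers entirely to \cite{bg}), so your Step 2 attempts more than the paper does; but as sketched it would fail in the regime $\mu<\lambda-\frac12$. Writing $m=j-2k$, the individual summand at fixed small $m$ (i.e.\ $k$ near $\lfloor j/2\rfloor$) has size of order $j^{4\lambda-4\mu-4}$, which is strictly smaller than the claimed total $j^{4\lambda-2\mu-3}$ whenever $\mu>-\frac12$. So the endpoint terms do not ``take over'': no single term, nor any $O(1)$ block of terms, attains the leading order, and normalizing by the endpoint term and invoking dominated convergence would produce a divergent normalized sum. The correct mechanism is a bulk contribution: with $t=m/j$ the summand behaves like $j^{4\lambda-2\mu-4}(1-t^2)^{2\lambda-2\mu-2}t^{2\mu}$ up to constants, and summing over the $\sim j/2$ admissible values of $m$ gives a Riemann sum converging to the Beta integral $\int_0^1(1-t^2)^{2\lambda-2\mu-2}t^{2\mu}\,{\rm d}t$, whence the exponent $4\lambda-2\mu-3$ and, after duplication, the stated constant. (In the regime $\mu>\lambda-\frac12$ your description is essentially right, except that it is not the $k=0$ term alone but the full convergent series $\sum_k(\lambda-\mu)_k^2/(k!)^2={}_2F_1(\lambda-\mu,\lambda-\mu;1;1)$, evaluated by Gauss's theorem, that produces the constant --- your ``pass to the limit on the tail'' step does capture this.) If you want a self-contained proof of \eqref{gexcc1} rather than the paper's citation, the second case needs this discrete Laplace/Riemann-sum analysis over the whole range of $k$, not a localization at $k\approx j/2$.
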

\begin{remark}
    The explicit weighted $L^{2}$-norm  of Gegenbauer polynomials in \eqref{gexc} is again derived  by  the connection formula (see \cite[(4.10.27)]{or})
   \begin{equation*}
       C_{j}^{(\lambda)}(x)=\sum_{k=0}^{\lfloor \frac{j}{2}\rfloor}\frac{(\lambda)_{j-k}(\lambda-\mu)_{k}}{(\mu+1)_{j-k}k!}
       \frac{j+\mu-2k}{\mu}C_{j-2k}^{(\mu)}(x),
   \end{equation*}
    and the orthogonality relation \eqref{or1}.
\end{remark}
     For convenience, we set 
\begin{equation*}
\begin{split}
     b^{(\lambda)}_{j}:=\left(\int_{0}^{\pi}\left(C_{j}^{(\lambda)}(\cos \theta)\right)^{2}\sin^{2\lambda}\theta {\rm d}\theta \right)^{-\frac{1}{2}}
     =\left(\frac{2^{1-2\lambda}\pi\Gamma(j+2\lambda)}{[\Gamma(\lambda)]^{2}(j+\lambda)\Gamma(j+1)} \right)^{-\frac{1}{2}}
\end{split} 
\end{equation*}
for $\lambda>-1/2$ and $\lambda\neq 0$. 

\begin{theorem} \label{iin}  Let $\lambda>0$ and $\mu >-1/2$.  There exists a constant $C>0$ only depending on $\lambda$ and $\mu$, such that
 \begin{equation*}
 \begin{split}
        &\int_{0}^{\pi} \left|b_{j-\ell}^{(\lambda+\ell)}C_{j-\ell}^{(\lambda+\ell)}(\cos \theta)\right|^{2}(\sin \theta)^{2(\mu+\ell)}\,{\rm d}\theta\\
        \le& 
     \left\{
  \begin{array}{ll}  C, &\displaystyle  \lambda-1/2< \mu\le \lambda,\\
 C(j+1)^{2(\lambda-\mu)-1}, & \displaystyle\mu<\lambda-1/2.
   \end{array}
\right. 
 \end{split}
 \end{equation*}
 for all $j,\ell \in \mathbb{N}_{0}$ and $j\ge \ell$.
\end{theorem}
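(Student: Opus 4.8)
The plan is to exploit the shift structure hidden in the left-hand side. Writing $\Lambda=\lambda+\ell$, $M=\mu+\ell$ and $J=j-\ell$ and recalling the definition of $b_{j}^{(\lambda)}$, the quantity to be bounded is exactly the ratio
\begin{equation*}
Q(j,\ell):=\frac{\displaystyle\int_{0}^{\pi}\left|C_{J}^{(\Lambda)}(\cos\theta)\right|^{2}(\sin\theta)^{2M}\,{\rm d}\theta}{\displaystyle\int_{0}^{\pi}\left|C_{J}^{(\Lambda)}(\cos\theta)\right|^{2}(\sin\theta)^{2\Lambda}\,{\rm d}\theta}=\frac{J_{J}^{(\Lambda;M)}}{J_{J}^{(\Lambda;\Lambda)}}=\int_{0}^{\pi}(\sin\theta)^{-2(\lambda-\mu)}\,{\rm d}\nu_{\Lambda,J}(\theta),
\end{equation*}
where ${\rm d}\nu_{\Lambda,J}$ denotes the probability measure proportional to $|C_{J}^{(\Lambda)}(\cos\theta)|^{2}(\sin\theta)^{2\Lambda}\,{\rm d}\theta$. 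The decisive point is that the two parameters controlling the asymptotics of Theorem~\ref{asym1}, namely the gap $\Lambda-M=\lambda-\mu$ and the critical offset $M-(\Lambda-\tfrac12)=\mu-\lambda+\tfrac12$, are both independent of $\ell$; only the degree $J$ and the order $\Lambda$ vary with $\ell$. Hence the entire difficulty is to produce a bound whose constant is free of $\ell$, and a direct H\"older reduction to the $(\sin\theta)^{-2}$ moment (as in Lemma~\ref{kl1}) would be too lossy, overshooting the sharp exponents.

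First I would establish the monotonicity inequality
\begin{equation*}
\frac{J_{J-1}^{(\Lambda+1;\,M+1)}}{J_{J-1}^{(\Lambda+1;\,\Lambda+1)}}\le\frac{J_{J}^{(\Lambda;\,M)}}{J_{J}^{(\Lambda;\,\Lambda)}},
\end{equation*}
the Gegenbauer analogue of the relations stated in the Remark after Lemma~\ref{kl1}. Its proof rests on the derivative identity $\frac{{\rm d}}{{\rm d}x}C_{J}^{(\Lambda)}(x)=2\Lambda\,C_{J-1}^{(\Lambda+1)}(x)$: substituting $C_{J-1}^{(\Lambda+1)}=\tfrac{1}{2\Lambda}(C_{J}^{(\Lambda)})'$ into both integrals on the left cancels the common factor, so that after passing to $x=\cos\theta$ the inequality becomes the statement that
\begin{equation*}
\frac{\int_{-1}^{1}\big((C_{J}^{(\Lambda)})'\big)^{2}(1-x^{2})^{M+\frac12}\,{\rm d}x}{\int_{-1}^{1}\big((C_{J}^{(\Lambda)})'\big)^{2}(1-x^{2})^{\Lambda+\frac12}\,{\rm d}x}\le\frac{\int_{-1}^{1}\big(C_{J}^{(\Lambda)}\big)^{2}(1-x^{2})^{M-\frac12}\,{\rm d}x}{\int_{-1}^{1}\big(C_{J}^{(\Lambda)}\big)^{2}(1-x^{2})^{\Lambda-\frac12}\,{\rm d}x},
\end{equation*}
which I would verify by integrating by parts and eliminating the second derivative through the Gegenbauer differential equation. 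Reading the monotonicity inequality with $(\Lambda,M,J)=(\lambda+\ell-1,\mu+\ell-1,j-\ell+1)$ gives $Q(j,\ell)\le Q(j,\ell-1)$, and since every intermediate order equals $\lambda+\ell-1\ge\lambda>0$ each step is legitimate; iterating down to $\ell=0$ yields $Q(j,\ell)\le Q(j,0)=J_{j}^{(\lambda;\mu)}/J_{j}^{(\lambda;\lambda)}$.

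It then remains to bound $Q(j,0)$ at the \emph{fixed} parameters $\lambda,\mu$, so that the constants furnished by Theorem~\ref{asym1} depend on $\lambda,\mu$ only. If $\lambda-\tfrac12<\mu\le\lambda$, both $J_{j}^{(\lambda;\mu)}$ and $J_{j}^{(\lambda;\lambda)}$ are of order $j^{2\lambda-2}$ by \eqref{gexcc1}, so $Q(j,0)$ stays bounded; if $\mu<\lambda-\tfrac12$, then $J_{j}^{(\lambda;\mu)}\sim c\,j^{4\lambda-2\mu-3}$ while $J_{j}^{(\lambda;\lambda)}\sim c'\,j^{2\lambda-2}$, whence $Q(j,0)\le C(j+1)^{2(\lambda-\mu)-1}$. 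The finitely many small $j$ not reached by the asymptotics are absorbed into $C$. Combining with the previous step produces precisely the two claimed estimates, and since $j\ge J$ the bound is stated in the correct variable.

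The crux is the monotonicity inequality. The derivative identity shifts the parameters painlessly, but the resulting comparison asserts that the normalized orthogonality measure of $C_{J-1}^{(\Lambda+1)}$ against $(\sin\theta)^{2(\Lambda+1)}$ charges the endpoints $\theta=0,\pi$ less heavily than that of $C_{J}^{(\Lambda)}$ against $(\sin\theta)^{2\Lambda}$; establishing this — that is, controlling the signs of the boundary and lower-order terms thrown off by the integration by parts — is the only genuinely delicate step, exactly as anticipated in the Remark following Lemma~\ref{kl1}.
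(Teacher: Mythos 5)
Your overall architecture coincides with the paper's: bound the $\ell=0$ case using the asymptotics \eqref{gexcc1} of Theorem \ref{asym1}, then reduce general $\ell$ to $\ell=0$ by iterating a one-step monotonicity under the shift $(\Lambda,M,J)\mapsto(\Lambda+1,M+1,J-1)$, which is exactly the ratio \eqref{rat1} being at most $1$. The problem is that you do not actually prove that monotonicity: you reduce it, via $\frac{{\rm d}}{{\rm d}x}C_{J}^{(\Lambda)}=2\Lambda C_{J-1}^{(\Lambda+1)}$, to a comparison of Rayleigh-type quotients and then say you ``would verify'' it by integration by parts and the Gegenbauer ODE, conceding at the end that this is ``the only genuinely delicate step.'' Carrying out your own reduction, with $p=C_{J}^{(\Lambda)}$ one finds
\begin{equation*}
\int_{-1}^{1}\bigl(p'(x)\bigr)^{2}(1-x^{2})^{\sigma+\frac12}\,{\rm d}x=J(J+2\Lambda)\int_{-1}^{1}p(x)^{2}(1-x^{2})^{\sigma-\frac12}\,{\rm d}x+2(\sigma-\Lambda)\int_{-1}^{1}x\,p(x)p'(x)(1-x^{2})^{\sigma-\frac12}\,{\rm d}x,
\end{equation*}
so your inequality is equivalent to the sign condition $\int_{-1}^{1}x\,p(x)p'(x)(1-x^{2})^{M-\frac12}\,{\rm d}x\ge 0$ for $-\tfrac12<M\le\Lambda$. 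That positivity is plausible (it holds for $J=1,2$ by direct computation) but it is not obvious, it is not proved anywhere in your argument, and the boundary/lower-order bookkeeping you defer is precisely where the work lies. As written, the proof has a genuine gap at its central step.

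For comparison, the paper closes exactly this step by a different and fully elementary device: it uses the explicit expansion \eqref{gexc} of $J_{j}^{(\lambda;\mu)}$ coming from the Gegenbauer connection formula, observes that (after accounting for the prefactor) every term in that sum is positive for $\mu>-1/2$, and checks in \eqref{rat2} that the ratio of corresponding terms under the shift equals $\frac{(j-2k)(j+2\mu-2k)}{j(j+2\lambda)}\le 1$, with the numerator sum having no more terms than the denominator sum. If you want to keep your derivative-identity route you must actually establish the positivity of $\int x\,pp'(1-x^{2})^{M-1/2}\,{\rm d}x$; the most direct way to do so is again to expand $p^{2}$ (or $C_{J}^{(\Lambda)}$) in the orthogonal family adapted to the weight $(1-x^{2})^{M-1/2}$, at which point you have essentially reproduced the paper's connection-coefficient argument.
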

\begin{proof} We prove the claim in two steps. First we prove the results for $\ell=0$ and all $j\in \mathbb{N}_{0}$. Then we derive the remaining by a decreasing relation. 

{\rm (\uppercase\expandafter{\romannumeral1})} When $ \lambda-1/2< \mu\le \lambda$ and $\ell=0$, by Theorem \ref{asym1}, there exists a constant $C(\lambda, \mu)$ such that
    \begin{equation} \label{rat3}
    \begin{split}
       \int_{0}^{\pi} \left|b_{j}^{(\lambda)}C_{j}^{(\lambda)}(\cos \theta)\right|^{2}(\sin \theta)^{2\mu}\,{\rm d}\theta
     =&\,\left(\frac{2^{1-2\lambda}\pi\Gamma(j+2\lambda)}{[\Gamma(\lambda)]^{2}(j+\lambda)\Gamma(j+1)} \right)^{-1}\\
     &\times\left(\frac{\sqrt{\pi}\Gamma(\mu+\frac{1}{2}-\lambda)}{2^{2\lambda-1}\Gamma(\lambda)^{2}\Gamma(\mu+1-\lambda)}
    j^{2\lambda-2}+\mathcal{O}(j^{\eta})\right)\\  
     \le &\, C(\lambda, \mu)
    \end{split}
 \end{equation}
holds for all $j\in \mathbb{N}_{0}$. This can also be seen from  Askey's transplantation theorem \cite{as}.
Similarly, we obtain the estimate for $ \mu<\lambda-1/2$ for $\ell=0$.\\

{\rm (\uppercase\expandafter{\romannumeral2})} For $\mu<\lambda-1/2$ or $ \lambda-1/2< \mu\le \lambda$,  we consider the following ratio
\begin{equation}\label{rat1}
\begin{split}
     &\frac{\left(b_{j-1}^{(\lambda+1)}\right)^{2}J_{j-1}^{\left(\lambda+1; \mu+1\right)}}{\left(b_{j}^{(\lambda)}\right)^{2} J_{j}^{(\lambda; \mu)}}=\frac{\int_{0}^{\pi} \left|b_{j-1}^{(\lambda+1)}C_{j-1}^{(\lambda+1)}(\cos \theta)\right|^{2}(\sin \theta)^{2(\mu+1)}\,{\rm d}\theta}{\int_{0}^{\pi} \left|b_{j}^{(\lambda)}C_{j}^{(\lambda)}(\cos \theta)\right|^{2}(\sin \theta)^{2\mu}\,{\rm d}\theta}\\
    =&\frac{4\lambda^{2}}{j(j+2\lambda)}\cdot\frac{\mu\left(\mu+\frac{1}{2}\right)}{(\mu+1)^{2}}
    \cdot \frac{\sum_{k=0}^{\lfloor\frac{j-1}{2}\rfloor}
    \frac{(\lambda+1)_{j-1-k}^{2}(\lambda-\mu)_{k}^{2}}{(\mu+2)_{j-1-k}^{2}(k!)^{2}}(j+\mu-2k)\frac{(2\mu+2)_{j-1-2k}}{(j-1-2k)!}}{\sum_{k=0}^{\lfloor\frac{j}{2}\rfloor}
    \frac{(\lambda)_{j-k}^{2}(\lambda-\mu)_{k}^{2}}{(\mu+1)_{j-k}^{2}(k!)^{2}}(j+\mu-2k)\frac{(2\mu)_{j-2k}}{(j-2k)!}}.
\end{split}
\end{equation}
with $j\ge 1$. 

Note that each term in the right-hand side of \eqref{gexc} is positive (due to $\mu>-1/2$ and $(\mu)_{0}=1$). For a fixed $k$ satisfying  $0\le k\le \lfloor(j-1)/2)\rfloor$,  we check the ratios of the corresponding terms in  the two summations of the right-hand side of \eqref{rat1},
\begin{equation}\label{rat2}
    \begin{split}
        &\frac{4\lambda^{2}}{j(j+2\lambda)}\cdot\frac{\mu\left(\mu+\frac{1}{2}\right)}{(\mu+1)^{2}}
    \cdot \frac{
    \frac{(\lambda+1)_{j-1-k}^{2}(\lambda-\mu)_{k}^{2}}{(\mu+2)_{j-1-k}^{2}(k!)^{2}}(j+\mu-2k)\frac{(2\mu+2)_{j-1-2k}}{(j-1-2k)!}}{
    \frac{(\lambda)_{j-k}^{2}(\lambda-\mu)_{k}^{2}}{(\mu+1)_{j-k}^{2}(k!)^{2}}(j+\mu-2k)\frac{(2\mu)_{j-2k}}{(j-2k)!}}\\
    =& \,\frac{4\lambda^{2}}{j(j+2\lambda)}\cdot\frac{\mu\left(\mu+\frac{1}{2}\right)}{(\mu+1)^{2}}\cdot
     \frac{(\mu+1)^{2}(2\mu+j-2k)(j-2k)}{2\mu(2\mu+1)\lambda^{2} }\\
     =&\,\frac{(j-2k)(j+2\mu-2k)}{j(j+2\lambda)}\\
     \le&\, 1.
    \end{split}
\end{equation}
By \eqref{rat2} and the fact $\lfloor(j-1)/2\rfloor\le \lfloor j/2 \rfloor$ as well as \eqref{rat3}, we obtain 
\begin{equation*}
\begin{split}
     &\int_{0}^{\pi} \left|b_{j-1}^{(\lambda+1)}C_{j-1}^{(\lambda+1)}(\cos \theta)\right|^{2}(\sin \theta)^{2(\mu+1)}\,{\rm d}\theta\\
     \le&\, \int_{0}^{\pi} \left|b_{j}^{(\lambda)}C_{j}^{(\lambda)}(\cos \theta)\right|^{2}(\sin \theta)^{2\mu}\,{\rm d}\theta\\
     \le&\, C(\lambda, \mu), 
\end{split}
\end{equation*}
for  all $j-1\in \mathbb{N}_{0}$. Similarly, we obtain the estimate for $\mu<\lambda-1/2$.
By induction, we complete the proof.
\end{proof}
\begin{remark} Theorem \ref{iin} is of  independent interest. The first estimate can be considered as a refined version of Askey's transplantation theorem in $L^{2}$ setting \cite{as}.
\end{remark}
\begin{remark}
  We see  from \eqref{gexcc1} that  the growth order in the second case is optimal.
\end{remark}

Now we go back to the weighted $L^{2}-L^{2}$ estimate in the Grushin setting.
\begin{proof}[Proof of Lemma \ref{pqq}] We consider the case $0\le \beta<1/2$ first. 
For each $\ell\equiv 0 \,({\rm mod}(\alpha+1))$,   the weighted norm of
   \begin{eqnarray*}
      h_{k, \ell}(\phi)&=& b_{k,\ell}\sin^{\frac{\ell}{\alpha+1}}\phi\, C_{\frac{k-\ell}{\alpha+1}}^{\frac{2\ell+n-2}{2(\alpha+1)}+\frac{1}{2}}(\cos \phi)\\&=& b_{k,\ell}\sin^{\frac{\ell}{\alpha+1}}\phi\, C_{\frac{k-\ell}{\alpha+1}}^{\frac{\ell}{\alpha+1}+\left(\frac{n-2}{2(\alpha+1)}+\frac{1}{2}\right)}(\cos \phi). 
 \end{eqnarray*}
can be estimated by  setting $\lambda=\frac{n-2}{2(\alpha+1)}+\frac{1}{2}$ and $\mu=\lambda-\beta$ in Theorem \ref{iin}. Thus  there exists a constant $C_{0}>0$ such that 
\begin{equation*}
    \int_{0}^{\pi}\left|h_{k, \ell}(\phi)\right|^{2} (\sin \phi)^{\frac{n-2}{\alpha+1}+1-2\beta}\, {\rm d}\phi\le C_{0},  
\end{equation*}
where both $\ell$ and $k$ are congruent to $0$ modulo $\alpha+1$   and  $0\le \ell\le k$.
Similarly, we can find constants $C_{j}$ for other $\ell\equiv j({\rm mod}(\alpha+1))$ with $j=1,2,\ldots,\alpha$. 
Let $C=\max_{0\le j\le \alpha}\{C_{j}\}$, we find the first constant for \eqref{wl22}.

The second part of \eqref{wl22} can be proven in the same way. This concludes the proof.
\end{proof}

\section{Carleman estimates and unique continuation}
\subsection{Carleman inequalities}\label{sec5}
Using the weighted $L^{p}-L^{q}$ bounds from Theorem \ref{pq1} for the projector $P_{k}$ in \eqref{int1},  we establish  Carleman estimates for the higher step Grushin operator $\Delta_{\alpha}$ in this subsection. 

Recall that $\psi(\phi)=\sin^{\frac{2\alpha}{\alpha+1}}\phi$ and $Q=n+m(\alpha+1)$. 
We first consider the cases when (1) $n\ge 4$, $m\ge 2$ and $\alpha\in \mathbb{N}$, or (2) $n\in\{2, 3\}$, $m\ge 2$ and $\alpha=1$. Let
\begin{equation} \label{pqd1}
    p=\frac{2\left(n+m-2+\frac{1}{\alpha+1}\right)}{n+m-2+\frac{2}{\alpha+1}} \quad \mbox{and}\quad  q=\frac{2\left(n+m-2+\frac{1}{\alpha+1}\right)}{n+m-2}.
\end{equation}
It is seen that
\begin{equation*}
    \frac{1}{p}+\frac{1}{q}=1 \quad \mbox{and} \quad \frac{1}{p}-\frac{1}{q}=\frac{1}{(\alpha+1)(n+m-2)+1}.
\end{equation*}
The main result of this subsection is as follows.
\begin{theorem} \label{sxc1}
Let $0<\varepsilon<1/8$, $s>100$, $\delta={\rm dist}\,(s, \mathbb{N})>0$, and consider $p$ and $q$ as defined in \eqref{pqd1}.  Then there exists a constant $C>0$ depending only on $\alpha, \varepsilon, \delta, n$ and $m$, such that
\begin{equation}\label{z2}
    \begin{split}
       &\left\|\rho^{-s}\psi^{\varepsilon} g\right\|_{L^{q}\left(\mathbb{R}^{n+m}, \rho^{-Q}\,{\rm d}x\,{\rm d}y\right)}
       \le C\left\|\rho^{-s+2}\psi^{-\varepsilon}\Delta_{\alpha}(g) \right\|_{L^{p}\left(\mathbb{R}^{n+m}, \rho^{-Q}\,{\rm d}x\,{\rm d}y\right)} \\
    \end{split}
\end{equation} for $g\in C_{0}^{\infty}(\mathbb{R}^{n+m}\backslash \{0\})$. 
\end{theorem}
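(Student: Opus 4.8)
The plan is to follow the Fourier–spectral scheme of Jerison \cite{JD} and Garofalo–Shen \cite{GS}, reducing \eqref{z2} to a single resolvent bound on a cylinder and then inserting the weighted projector estimate of Theorem \ref{pq1} together with the Grushin–Sobolev inequality of \cite{mor}. First I would introduce the coordinate $t=\log\rho$ and set $u(t,\sigma)=g$. By the polar form \eqref{ps},
\[
\rho^{2}\psi^{-1}\Delta_{\alpha}=\widetilde L:=\partial_t^2+(Q-2)\partial_t+\Delta_\sigma,
\]
so that $\rho^{-s+2}\psi^{-\varepsilon}\Delta_{\alpha}g=\rho^{-s}\psi^{1-\varepsilon}\widetilde L u$, while \eqref{mea} and \eqref{dm1} give $\rho^{-Q}\,{\rm d}x\,{\rm d}y=c_0\,\psi^{-1}\,{\rm d}t\,{\rm d}\Omega$. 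Writing $w=e^{-st}u$ and conjugating, $\widetilde L_s:=e^{-st}\widetilde L\,e^{st}=(\partial_t+s)^2+(Q-2)(\partial_t+s)+\Delta_\sigma$, and using $1/p+1/q=1$ these identities turn \eqref{z2} into the statement that
\[
\mathcal T:=\psi^{-\beta}\,\widetilde L_s^{-1}\,\psi^{-\beta},\qquad \beta:=\tfrac1q-\varepsilon,
\]
is bounded from $L^p(\mathbb R\times\Omega,{\rm d}t\,{\rm d}\Omega)$ to $L^q(\mathbb R\times\Omega,{\rm d}t\,{\rm d}\Omega)$ with a constant independent of $s$. Since $0<\varepsilon<1/8<1/q$, one has $0\le\beta<1/q$, which is exactly the range required in Theorem \ref{pq1}.

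Next I would diagonalize in the $\sigma$-variable. By Theorem \ref{m1} any function on $\Omega$ expands along the eigenspaces $\mathcal H_k^\alpha$ of $\Delta_\sigma$ (eigenvalue $-k(k+Q-2)$), and the Fourier transform in $t$ converts $\widetilde L_s$ into the operator-valued multiplier $\sum_k m_k(\tau)^{-1}P_k$, where
\[
m_k(\tau)=(i\tau+s)^2+(Q-2)(i\tau+s)-k(k+Q-2)=(i\tau+s-k)(i\tau+s+k+Q-2),
\]
so that $|m_k(\tau)|^2=\bigl(\tau^2+(s-k)^2\bigr)\bigl(\tau^2+(s+k+Q-2)^2\bigr)$. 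Here the hypothesis $\delta>0$ is decisive: since $k\in\mathbb N_0$, we have $|s-k|\ge\delta$ for every $k$, hence the symbol never vanishes, $\widetilde L_s^{-1}$ is well defined, and its convolution kernel $K_k$ (with $\widehat{K_k}=1/m_k$) obeys $\|K_k\|_{L^1(\mathbb R)}\lesssim \delta^{-1}(k+1)^{-1}$ by partial fractions. Because $\psi$ depends on $\phi$ only, $\mathcal T$ commutes with the $t$-convolutions, giving $\mathcal T G=\sum_k K_k\ast_t(A_kG)$ with $A_k=\psi^{-\beta}P_k\psi^{-\beta}$.

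The core is the boundedness of $\mathcal T$. Theorem \ref{pq1} supplies $\|A_k\|_{L^p(\Omega)\to L^q(\Omega)}\le C(k+1)^{\theta}$ with $\theta=\tfrac{n+m-2}{\,n+m-2+1/(\alpha+1)}=1-\kappa$ and $\kappa:=\tfrac1p-\tfrac1q=\tfrac1{(\alpha+1)(n+m-2)+1}$, while Young's inequality in $t$ with $1/r=1-\kappa$ yields $\|K_k\ast_t(A_kG)\|_{L^q}\le C(k+1)^{\theta}\|K_k\|_{L^r}\|G\|_{L^p}$. The exponents are critically balanced, $\theta=1-\kappa=1/r$, so a term-by-term summation is only logarithmically divergent; I would close this borderline exactly as in \cite{JD,GS}, by not summing crudely but splitting $\mathcal T$ into a principal part and a resonant part. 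On the principal part, where $|m_k(\tau)|\simeq(1+k+|\tau|)^2$, the operator behaves like $(-\Delta_\alpha)^{-1}$ and its sharp $L^p\to L^q$ smoothing of gain $\kappa$ is furnished by the Grushin–Sobolev inequality of \cite{mor}, the weights $\psi^{\pm\beta}$ being absorbed through Theorem \ref{pq1}. On the resonant part only the finitely many $k$ with $|s-k|$ small survive, where $1/|m_k(\tau)|\lesssim\delta^{-1}$, so their contribution is bounded by $C(\delta)\|G\|_{L^p}$; the strict inequality $\varepsilon>0$ provides the slack needed to make this splitting, and the resulting constant, uniform in $s$. Unwinding the change of variables then recovers \eqref{z2}.

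The main obstacle is precisely this critical balance $\theta=1-\kappa$: a naive summation loses a logarithm, so the gain must come from the joint $(k,\tau)$-behaviour rather than from each frequency in isolation, and the constant must depend on $s$ only through $\delta$. The delicacy is concentrated near the resonances $s\approx k$, where the sharp Sobolev smoothing of \cite{mor} and the weighted projector bound of Theorem \ref{pq1} must be reconciled; away from the resonances the estimate is standard fractional integration on the cylinder $\mathbb R\times\Omega$, and the $L^1$–$L^\infty$ and weighted $L^2$–$L^2$ inputs behind Theorem \ref{pq1} already encode everything needed.
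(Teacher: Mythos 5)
Your reduction is the same as the paper's: pass to $t=\log\rho$, use \eqref{ps} to write $\rho^{2}\psi^{-1}\Delta_{\alpha}$ as $\partial_t^2+(Q-2)\partial_t+\Delta_\sigma$, conjugate by $\rho^{s}$, expand along the eigenspaces $\mathcal H_k^{\alpha}$, Fourier transform in $t$, and reduce to bounding $\sum_k a_s(\eta,k)\,\psi^{-\beta}P_k\psi^{-\beta}$ from $L^p$ to $L^q$ on the cylinder, with $\delta>0$ keeping the symbol $(i\eta+s-k)(i\eta+s+k+Q-2)$ away from zero. Up to that point your proposal matches steps (I)--(II) of the paper. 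The gap is in how you close the summation. First, the appeal to the Grushin--Sobolev inequality of \cite{mor} for the ``principal part'' cannot work as stated: that inequality is unweighted and lives on the exponent pair $\bigl(2,\,2Q/(Q-2)\bigr)$ with gain $1/Q=1/(n+m(\alpha+1))$, whereas here you need an $L^p\to L^q$ gain of exactly $\kappa=1/p-1/q=\bigl((\alpha+1)(n+m-2)+1\bigr)^{-1}$, a different number; nor is there any mechanism for ``absorbing'' the weights $\psi^{\pm\beta}$ into it. The paper does not use the Sobolev inequality anywhere in this proof (it enters only later, in the unique-continuation argument of Section 6). Second, you have misplaced the difficulty: a term-by-term application of Young's inequality, using $\|A_k\|_{p\to q}\le C(k+1)^{1-\kappa}$ and the exact $L^r$-norm ($1/r=1-\kappa$) of the partial-fraction kernel, actually \emph{converges} over the resonant and intermediate ranges $|k-s|\lesssim s$ (one gets $\sum_{d}s^{-\kappa}d^{-(1-\kappa)}=O(1)$ thanks to $\delta>0$). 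The genuine logarithmic divergence occurs in the far range $k\gg s$, where each term contributes $\sim k^{-1}$, and no principal/resonant dichotomy of the kind you describe resolves it.

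What the paper actually does, and what your proposal lacks, is a joint $(k,\eta)$ decomposition. For the range $|k-s-i\eta|\le s/40$ it uses a dyadic partition $\Phi_\gamma(|k-s-i\eta|)$: on each block at most $2^{\gamma+1}$ values of $k$ survive, all comparable to $s$, the $\eta$-support has length $\lesssim 2^{\gamma}$, and the symbol obeys $|\partial_\eta^{j}a_s^{\gamma}|\le C_j2^{-(1+j)\gamma}s^{-1}$; integration by parts then produces a kernel bounded by $C\,2^{\gamma}s^{-\kappa}(1+2^{\gamma}|t-w|)^{-10}$, and the resulting bounds $Cs^{-\kappa}2^{\gamma\kappa}$ sum geometrically to a constant. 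For the far block $|k-s-i\eta|\ge s/40$ it proves the pointwise decay $|\partial_\eta^{j}a_s^{N}|\le C_j(|\eta|+s+k)^{-j-2}$, sums over $k$ with the split $k\le|t-w|^{-1}$ versus $k>|t-w|^{-1}$ to obtain a convolution kernel $O(|t-w|^{-(1-\kappa)})$ in $t$, and concludes by one-dimensional fractional integration. These two mechanisms are the substance of the proof, and your proposal does not supply an executable substitute for either; you would need to rebuild the argument along these lines (or find a genuinely new way to beat the borderline summation) before the proof is complete.
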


\begin{proof} We  prove it in four steps. Our proof follows the approach  developed in \cite{JD} by Jerison, which was  later extended to the Grushin setting by Garofalo and Shen in  \cite[Theorem 5.1]{GS}.

{\rm (\uppercase\expandafter{\romannumeral1})} We  express the desired estimate \eqref{z2} in an equivalent form.
Let $\rho=e^{t}, t\in \mathbb{R}$. The Grushin operator $\Delta_{\alpha}$ in \eqref{ps} now can be written as
\begin{eqnarray}\label{z1}
\Delta_{\alpha}=e^{-2t}\psi(\phi) \cdot\left(\frac{\partial^{2}}{\partial t^{2}}+(Q-2)\frac{\partial}{\partial t}
+\Delta_{\sigma} \right),\end{eqnarray}
where $\Delta_{\sigma}$ is given in \eqref{r1}. Thus for $u_{k}(\phi, \omega)\in \mathcal{H}_{k}^{\alpha}$  where $\omega\in \mathbb{S}^{n-1}\times \mathbb{S}^{m-1}$, by \eqref{f3} and \eqref{z1}, we have
\begin{eqnarray}\label{egg1}
\rho^{-s+2}\Delta_{\alpha}(\rho^{s}e^{it\eta}u_{k})&=&e^{(-s+2)t}\Delta_{\alpha}(e^{st}e^{it\eta}u_{k})\\
&=&-(k-(s+i\eta))(k+Q-2+s+i\eta)\,\psi \,e^{it\eta}\,u_{k}.\nonumber
\end{eqnarray}
For later use, we denote \[a_{s}(\eta, k)=-\left((k-(s+i\eta))(k+Q-2+s+i\eta)\right)^{-1}.\] Moreover, for any $f(t, \phi, \omega)=g(e^{t}, \phi, \omega)\in C_{0}^{\infty}(\mathbb{R}\times \Omega)$, denote
\[\hat{f}(\eta, \phi, \omega)=\int_{-\infty}^{\infty}e^{-it\eta}f(t, \phi, \omega)\,{\rm d}t.\]
 Similarly as done in \cite[(5.11)]{GS} and \cite[p.\,130]{JD},  
we introduce the following operator,
\begin{equation*}
    \begin{split}
       R_{s}(f)(t, \phi, \omega):=&\sum_{k=0}^{\infty}\frac{1}{2\pi}\int_{-\infty}^{\infty}a_{s}(\eta, k)e^{it\eta}
Q_{k}(\hat{f})(\eta, \phi,\omega)\,{\rm d}\eta\\
=&\sum_{k=0}^{\infty}\frac{1}{2\pi}\int_{\mathbb{R}}\left(\int_{\mathbb{R}}a_{s}(\eta, k)e^{i(t-w)\eta}Q_{k}\,{\rm d}\eta \right)f(w, \phi, \omega)\,{\rm d} w,   
    \end{split}
\end{equation*}
where $Q_{k}$ is the operator given by
\begin{equation*}
    Q_{k}(f)(\eta, \phi, \omega):=P_{k}\left(\frac{f(\eta, \cdot, \cdot)}{\psi(\cdot)}\right)(\phi,\omega).
\end{equation*}
The operator $R_{s}$ is the inverse of  the operator  $L_{s}(f):=\rho^{-s+2}\Delta_{\alpha}(\rho^{s}f)$ at least formally. 
It can be formally verified by \eqref{egg1} and the  fact $f(t)=(1/2\pi)\int_{-\infty}^{\infty}e^{its}\hat{f}(s)\,{\rm d} s$.

Moreover,  it is not hard to see from  Eq.\,\eqref{dm1} for  ${\rm d}\Omega$   and Eq.\,\eqref{mea} for ${\rm d}x \,{\rm d}y$ that 
\begin{equation*}
     \rho^{-Q}\,{\rm d}x\,{\rm d}y=\frac{1}{(\alpha+1)^{m}}\psi^{-1}\,{\rm d}t\,{\rm d}\Omega.
\end{equation*}
Collecting  everything,  we see that the estimate \eqref{z2} is equivalent to 
\begin{equation} \label{wp1}
    \begin{split}
      &\|R_{s}(f)\|_{L^{q}\left(\mathbb{R}\times\Omega, \psi^{-1+\varepsilon q}\,{\rm d}t\,{\rm d}\Omega\right)} \le C\|f\|_{L^{p}\left(\mathbb{R}\times\Omega, \psi^{-1-\varepsilon p}\,{\rm d}t\,{\rm d}\Omega\right)}\\
    \end{split}
\end{equation}
for all $f\in C_{0}^{\infty}(\mathbb{R}\times\Omega)$.

{\rm (\uppercase\expandafter{\romannumeral2})}  In this step,  we introduce new operators based on a partition of unity in order to prove \eqref{wp1}. Fix $s>100$ with ${\rm dist}(s, \mathbb{N})=\delta>0$. Assume $2^{N}\le (s/10)<2^{N+1}$. 
We choose the same partition of unity
$\{\Phi_{\gamma}\}_{\gamma=0}^{N}$ for $\mathbb{R}_{+}$ as in \cite[(5.14)]{GS} and \cite[(9)]{JD}, such that
\begin{eqnarray}
\left\{
  \begin{array}{ll}
    \sum_{\gamma=0}^{N}\Phi_{\gamma}(r)=1, & \hbox{all $r>0$;} \\
    {\rm supp}\,\Phi_{\gamma}\subset\{r: 2^{\gamma-2}\le r\le 2^{\gamma}\}, & \hbox{$\gamma=1,2,\ldots, N-1$;} \\
    {\rm supp}\,\Phi_{0}\subset\{r: 0<r\le 1\}, & \\
    {\rm supp}\,\Phi_{N}\subset\{r: r\ge s/40\}, &
  \end{array}
\right.
\end{eqnarray}
and satisfying
\begin{eqnarray}\label{pdx1}
\left|\frac{{\rm d}^{\ell}}{{\rm d}r^{\ell}}\Phi_{\gamma}(r)\right|\le C_{\ell}2^{-\gamma\ell}, \quad \ell=0,1,2,\ldots.
\end{eqnarray}
For $0\le \gamma\le N$,  we let
\begin{eqnarray} \label{pdx2}
a_{s}^{\gamma}(\eta, k):=\Phi_{\gamma}(|k-s-i\eta|)a_{s}(\eta, k)
\end{eqnarray}
and define
\begin{eqnarray*}
R_{s}^{\gamma}(f)(t, \phi, \omega)&:=&\sum_{k=0}^{\infty}\frac{1}{2\pi}\int_{-\infty}^{\infty}a_{s}^{\gamma}(\eta, k)e^{it\eta}
 Q_{k}(\tilde{f})(\eta, \phi, \omega)\,{\rm d}\eta\\
 &=&\sum_{k=0}^{\infty}\frac{1}{2\pi}\int_{\mathbb{R}}\left(\int_{\mathbb{R}}a_{s}^{\gamma}(\eta, k)e^{i(t-w)\eta}Q_{k}\,{\rm d}\eta \right)f(w, \phi, \omega)\,{\rm d} w.
\end{eqnarray*}
Obviously, it holds that
\begin{equation*}
    R_{s}=\sum_{\gamma=0}^{N}R_{s}^{\gamma}.
\end{equation*}

{\rm (\uppercase\expandafter{\romannumeral3})} We estimate $R_{s}^{\gamma}$ for $0\le \gamma \le N-1$ in this step. 
 Observing when $0\le \gamma\le N-1$,  $a_{s}^{\gamma}(\eta, k)$ is supported where $\delta 2^{\gamma-2}\le|k-s-i\eta|\le 2^{\gamma}$. Therefore,  there are at most $2\cdot 2^{\gamma}$ non-zero terms in the sum over $k$ which defines $R_{s}^{\gamma}$. Furthermore, the value of $k$ in each case is  comparable to $s$. Now, by Eqs.\,\eqref{pdx1}, \eqref{pdx2} and the aforementioned facts, it can be verified  that 
\begin{eqnarray} \label{coa}
\left|\left(\frac{\partial}{\partial \eta}\right)^{j}a_{s}^{\gamma}(\eta, k)\right|\le C_{j}\cdot2^{-(1+j)\gamma}\cdot s^{-1},\qquad j=0,1,\ldots.
\end{eqnarray}

The $L^{p}-L^{q}$ estimate \eqref{pq}   in Theorem \ref{pq1} implies that
\begin{eqnarray*}
\left\| Q_{k}(f)\right\|_{L^{q}\left(\Omega, \psi^{-\beta q}{\rm d}\Omega\right)}\le C(k+1)^{\frac{n+m-2}{n+m-2+\frac{1}{\alpha+1}}}\left\|f \right\|_{L^{p}\left(\Omega, \psi^{\beta p-p}{\rm d}\Omega\right)}
\end{eqnarray*}
for $0\le \beta<1/q$. Letting $\varepsilon=1/q-\beta$, we get
\begin{equation}\label{pqd}
    \begin{split}
        &\left\|Q_{k}(f)\right\|_{L^{q}\left(\Omega, \psi^{-1+\varepsilon q}\,{\rm d}\Omega\right)}\\ \le&\,
        C(k+1)^{\frac{n+m-2}{n+m-2+\frac{1}{\alpha+1}}}\left\|f \right\|_{L^{p}\left(\Omega, \psi^{-1-\varepsilon p}\,{\rm d}\Omega\right)}.
    \end{split}
\end{equation}
Now performing integration by parts, we conclude from \eqref{coa} and \eqref{pqd} that
\begin{eqnarray*}
&&\left\|\int_{\mathbb{R}}e^{i(t-w)\eta}\sum_{k=0}^{\infty}a_{s}^{\gamma}(\eta, k)Q_{k}(f)\,{\rm d}\eta\right\|_{L^{q}\left(\Omega, \psi^{-1+\varepsilon q}\,{\rm d}\Omega\right)}\\
&\le&\frac{C}{|t-w|^{j}}\sum_{k=0}^{\infty}\int_{\mathbb{R}}\left|\left(\frac{\partial}{\partial \eta}\right)^{j}a_{s}^{\gamma}(\eta,k)\right|\,{\rm d}\eta \|Q_{k}(f)\|_{L^{q}\left(\Omega, \psi^{-1+\varepsilon q}\,{\rm d}\Omega\right)}\\
&\le&\frac{C}{|t-w|^{j}}\cdot 2^{-j\gamma}\cdot 2^{\gamma}\cdot s^{-1+\frac{n+m-2}{n+m-2+\frac{1}{\alpha+1}}}\|f\|_{L^{p}\left(\Omega, \psi^{-1-\varepsilon p}\,{\rm d}\Omega\right)}\\
&=&\frac{C}{\left(2^{\gamma}|t-w|\right)^{j}}\cdot 2^{\gamma}\cdot s^{-\frac{1}{(\alpha+1)(n+m-2)+1}}\|f\|_{L^{p}\left(\Omega, \psi^{-1-\varepsilon p}\,{\rm d}\Omega\right)}.
\end{eqnarray*}
Here the second inequality is because the integration in $\eta$ is over an interval of length $\le 2\cdot 2^{\gamma}$ and there are at most $2\cdot 2^{\gamma}$ non-zero terms in the sum over $k$. 

Now choosing $j=0$ and $j=10$, we obtain
\begin{eqnarray*}
&&\left\|\int_{\mathbb{R}}e^{i(t-w)\eta}\sum_{k=0}^{\infty}a_{s}^{\gamma}(\eta, k)Q_{k}(f)\,{\rm d}\eta\right\|_{L^{q}\left(\Omega, \psi^{-1+\varepsilon q}\,{\rm d}\Omega\right)}\\
&\le&\frac{C}{(1+2^{\gamma}|t-w|)^{10}}\cdot s^{-\frac{1}{(\alpha+1)(n+m-2)+1}}\cdot 2^{\gamma}\|f\|_{L^{p}\left(\Omega, \psi^{-1-\varepsilon p}\,{\rm d}\Omega\right)}.
\end{eqnarray*}
Thus, for $0\le\gamma\le N-1$, we have,
\begin{eqnarray*}
&&\left\|R_{s}^{\gamma}(f)\right\|_{L^{q}\left(\mathbb{R}\times\Omega, \psi^{-1+\varepsilon q}{\rm d}t{\rm d}\Omega\right)}\\
&\le& C\cdot s^{-\frac{1}{(\alpha+1)(n+m-2)+1}}\cdot 2^{\gamma}\left\|\int_{\mathbb{R}}\frac{1}{(1+2^{\gamma}|t-w|)^{10}}\|f(t,\cdot, \cdot)\|_{L^{p}\left(\Omega, \psi^{-1-\varepsilon p}{\rm d}\Omega\right)}{\rm d}t\right\|_{L^{q}(\mathbb{R}, {\rm d}t)}\\
&\le& C\cdot s^{-\frac{1}{(\alpha+1)(n+m-2)+1}}\cdot 2^{\gamma}
\left\|\frac{1}{(1+2^{\gamma}|\cdot|)^{10}}\right\|_{L^{\frac{n+m-2+1/(\alpha+1)}{n+m-2}}(\mathbb{R},{\rm d}t)}
\|f\|_{L^{p}\left(\mathbb{R}\times\Omega,\psi^{-1-\varepsilon p}{\rm d}t{\rm d}\Omega\right)}\\
&\le& C\cdot s^{-\frac{1}{(\alpha+1)(n+m-2)+1}}\cdot 2^{\frac{\gamma}{(\alpha+1)(n+m-2)+1}}\|f\|_{L^{p}\left(\mathbb{R}\times\Omega,\psi^{-1-\varepsilon p}{\rm d}t{\rm d}\Omega\right)},
\end{eqnarray*}
where the first inequality is by  Minkowski's inequality and the second inequality is by Young's convolution inequality, respectively.
Since
\begin{equation*}
    \sum_{\gamma=0}^{N-1}2^{\frac{\gamma}{(\alpha+1)(n+m-2)+1}}\le s^{(\alpha+1)(n+m-2)+1)^{-1}},
\end{equation*}
 we have
\begin{eqnarray*}
\sum_{\gamma=0}^{N-1}\left\|R_{s}^{\gamma}(f)\right\|_{L^{q}\left(\mathbb{R}\times\Omega, \psi^{-1+\varepsilon q}\,{\rm d}t\,{\rm d}\Omega\right)}
\le C \|f\|_{L^{p}\left(\mathbb{R}\times\Omega,\psi^{-1-\varepsilon p}\,{\rm d}t\,{\rm d}\Omega\right)}.
\end{eqnarray*}

{\rm (\uppercase\expandafter{\romannumeral4})} We still need to estimate $R_{s}^{N}(g)$. 
It can be checked that
\begin{eqnarray}\label{ls1}
\left|\left(\frac{\partial}{\partial\eta}\right)^{j}a_{s}^{N}(\eta,k)\right|\le \frac{C_{j}}{(|\eta|+s+k)^{j+2}}.
\end{eqnarray}
By \eqref{ls1} and integration by parts, choosing $j=0$ and $j=1$, it follows that
\begin{eqnarray*}
\left|\int_{\mathbb{R}}e^{i(t-w)\eta}a_{s}^{N}(\eta,k)\,{\rm d}\eta \right|\le \frac{C}{(k+s)[1+|t-w|(k+s)]}.
\end{eqnarray*}
Furthermore, by the above estimate and the bound in \eqref{pqd} for $Q_{k}$, we have
\begin{eqnarray*}
&&\left\|\sum_{k=0}^{\infty}\int_{\mathbb{R}}e^{i(t-w)\eta}a_{s}^{N}(\eta,k)Q_{k}(f)\,{\rm d}\eta \right\|_{L^{q}\left(\Omega,\psi^{-1+\varepsilon q}{\rm d}t{\rm d}\Omega\right)}\\
&\le& C\sum_{k=0}^{\infty}\frac{(k+1)^{\frac{n+m-2}{n+m-2+\frac{1}{\alpha+1}}}}{(k+s)[1+|t-w|(k+s)]}
\|f\|_{L^{p}\left(\Omega,\psi^{-1-\varepsilon p}{\rm d}t{\rm d}\Omega\right)}\\
&\le&C
\left\{ \sum_{k\le\frac{1}{|t-w|}}(k+1)^{-\frac{1}{(\alpha+1)(n+m-2)+1}}+\sum_{k>\frac{1}{|t-w|}}(k+1)^{-1-\frac{1}{(\alpha+1)(n+m-2)+1}}|t-w|^{-1}\right\}\\
&&\times \|f\|_{L^{p}\left(\Omega,\psi^{-1-\varepsilon p}{\rm d}t{\rm d}\Omega\right)}\\
&\le&\frac{C}{|t-w|^{\frac{n+m-2}{n+m-2+\frac{1}{\alpha+1}}}}\|f\|_{L^{p}\left(\Omega,\psi^{-1-\varepsilon p}{\rm d}t{\rm d}\Omega\right).}
\end{eqnarray*}
Now, the desired estimate  for the last term $R_{s}^{N}$  follows by applying the Minkowski's inequality and using the well-known bounds on fractional integration.

Combining the estimates in {\rm (\uppercase\expandafter{\romannumeral3})} and {\rm (\uppercase\expandafter{\romannumeral4})}, we conclude the proof.
\end{proof}

Using the  weighted $L^{2}-L^{2}$ estimate obtained in \eqref{fz1} and the $L^{1}-L^{\infty}$ estimate in Remark \ref{s1et}, similarly, we have the  Carleman estimates for
$\Delta_{\alpha}$ on $\mathbb{R}^{n+1}$.
\begin{theorem} \label{sxc2}
Let $0<\varepsilon<1/8$, $s>100$ and $\delta={\rm dist}(s, \mathbb{N})>0$. For given $\alpha\in \mathbb{N}\backslash \{1\}$,  there exists a constant $C>0$ depending only on $\alpha, \varepsilon, \delta$ and $ n$, such that for $g\in C_{0}^{\infty}(\mathbb{R}^{n+1}\backslash \{0\})$, 
\begin{equation}
       \left\|\rho^{-s}\psi^{\varepsilon} g\right\|_{L^{q}\left(\mathbb{R}^{n+1}, \rho^{-Q}\,{\rm d}x\,{\rm d}y\right)}\\
\le
C\left\|\rho^{-s+2}\psi^{-\varepsilon}\Delta_{\alpha}(g) \right\|_{L^{p}\left(\mathbb{R}^{n+1}, \rho^{-Q}\,{\rm d}x\,{\rm d}y\right)}, 
\end{equation}
where  
\begin{equation*}
\left\{
  \begin{array}{ll}
    p=\frac{6\alpha+10}{3\alpha+7}, \quad q=\frac{6\alpha+10}{3\alpha+3}, & \hbox{if $n\in \{2, 3\}$;} \\
    p=\frac{4+2(\alpha+1)(n-1)}{4+(\alpha+1)(n-1)}, \quad q=\frac{4+2(\alpha+1)(n-1)}{(\alpha+1)(n-1)}, & \hbox{if $n\ge 4$.}
  \end{array}
\right.
\end{equation*}
\end{theorem}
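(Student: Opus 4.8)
The plan is to run the four-step argument of Theorem~\ref{sxc1} essentially verbatim with $m=1$, the only new ingredients being the pair of endpoint bounds adapted to $\mathbb{R}^{n+1}$. First I would assemble, by complex interpolation, a weighted $L^{p}$--$L^{q}$ estimate for the projector $P_{k}$ on $\Omega\subset\mathbb{R}^{n+1}$ that plays the role occupied by Theorem~\ref{pq1} in the proof of Theorem~\ref{sxc1}, and then insert it into the Jerison--Garofalo--Shen machinery.

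For the interpolation I take two endpoints. The $L^{2}$--$L^{2}$ endpoint is the sharp weighted estimate \eqref{fz1} of Theorem~\ref{wr1}, which for $\alpha\in\mathbb{N}\setminus\{1\}$ yields the operator bound $C(k+1)^{(\alpha-1)/(\alpha+1)}$ for $\psi^{-\beta}P_{k}\psi^{-\beta}$ uniformly over the range $(\alpha+1)/4\alpha\le\beta<1/2$. The $L^{1}$--$L^{\infty}$ endpoint depends on $n$: for $n\ge 4$ it is $C(k+1)^{n-1}$, obtained from Remark~\ref{s1et} (i.e.\ Theorem~\ref{wl1} with $m=1$), while for $n\in\{2,3\}$ Lemma~\ref{li2} is no longer available and one must use the weaker bound $C(k+1)^{m+2}=C(k+1)^{3}$ recorded in the remark following Theorem~\ref{wl1}. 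Interpolating with parameter $\theta$ fixed by $1/q=\theta/2$ and $1/p=1-\theta/2$, the $(k+1)$-exponent of the resulting $L^{p}$--$L^{q}$ bound is
\[
E=(n-1)(1-\theta)+\tfrac{\alpha-1}{\alpha+1}\,\theta\quad(n\ge4),\qquad E=3(1-\theta)+\tfrac{\alpha-1}{\alpha+1}\,\theta\quad(n\in\{2,3\}).
\]
A short computation shows that the conjugate pairs $(p,q)$ displayed in the statement are precisely those for which $E=1-(\tfrac1p-\tfrac1q)$, with $\tfrac1p-\tfrac1q=\tfrac{2}{2+(\alpha+1)(n-1)}$ when $n\ge4$ and $\tfrac1p-\tfrac1q=\tfrac{2}{3\alpha+5}$ when $n\in\{2,3\}$; in both regimes $E<1$.

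With this projector estimate I would then repeat the four steps of Theorem~\ref{sxc1} with $m=1$: substitute $\rho=e^{t}$ and rewrite $\Delta_{\alpha}$ as in \eqref{z1}; reduce \eqref{z2}, via the identity $\rho^{-Q}\,{\rm d}x\,{\rm d}y=(\alpha+1)^{-1}\psi^{-1}\,{\rm d}t\,{\rm d}\Omega$ and the symbol $a_{s}(\eta,k)$, to a mapping bound for the inverse operator $R_{s}$ built from $Q_{k}(f)=P_{k}(f/\psi)$; split $R_{s}=\sum_{\gamma=0}^{N}R_{s}^{\gamma}$ through the same partition of unity $\{\Phi_{\gamma}\}$; estimate $R_{s}^{\gamma}$ for $0\le\gamma\le N-1$ by repeated integration by parts in $\eta$ together with the analogue of \eqref{pqd} now carrying the exponent $E$, producing a contribution of size $C\,s^{-(1-E)}2^{\gamma(1-E)}$; and finally treat the tail $R_{s}^{N}$ using the symbol bound \eqref{ls1} and fractional integration. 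Throughout, the weight parameter is fixed by $\varepsilon=1/q-\beta$ with $\beta$ in the range of \eqref{fz1}, so that $0<\varepsilon<1/8$.

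The step demanding the most care is the interpolation in the case $n\in\{2,3\}$: since Lemma~\ref{li2} fails there for $\alpha\neq1$, one is forced to use the larger $L^{1}$--$L^{\infty}$ exponent $m+2$, and it is exactly this substitution that produces the genuinely different exponents $p,q$ in the statement rather than the uniform formula of Theorem~\ref{pq1}. Two points then remain, both bookkeeping rather than conceptual. First, the sharp $L^{2}$--$L^{2}$ exponent $(\alpha-1)/(\alpha+1)$ is attained by pushing the $L^{2}$-endpoint weight $\psi^{-\beta_{0}}$ up to $\beta_{0}$ near $1/2$, which is admissible only because $(\alpha+1)/4\alpha<1/2$, i.e.\ $\alpha>1$; this is precisely why the theorem is confined to $\alpha\in\mathbb{N}\setminus\{1\}$, the excluded value $\alpha=1$ being covered by Theorem~\ref{sxc1}. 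Second, one must verify $E<1$ (equivalently $\tfrac1p-\tfrac1q>0$) in each regime, since this is what makes the geometric sum $\sum_{\gamma=0}^{N-1}2^{\gamma(1-E)}\le C\,s^{1-E}$ converge and cancel the decaying factor $s^{-(1-E)}$, leaving a constant independent of $s$. Once these are in place the argument closes exactly as for Theorem~\ref{sxc1}.
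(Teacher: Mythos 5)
Your proposal coincides with the paper's own (very terse) proof: the paper obtains Theorem~\ref{sxc2} precisely by interpolating the tighter weighted $L^{2}$--$L^{2}$ bound \eqref{fz1} with the $L^{1}$--$L^{\infty}$ bounds of Remark~\ref{s1et} (exponent $n-1$ for $n\ge 4$, and the weaker exponent $m+2=3$ for $n\in\{2,3\}$) and then rerunning the four steps of Theorem~\ref{sxc1} with $m=1$. Your verification that the resulting $(k+1)$-exponent satisfies $E=1-\left(\tfrac{1}{p}-\tfrac{1}{q}\right)<1$ in both regimes correctly reproduces the stated pairs $(p,q)$, so the proposal is correct and takes essentially the same approach.
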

As a summary, we collect the exponents $p$ and $q$ in the Carleman estimates for $\Delta_{\alpha}$ in Table \ref{tab:example1}.
\begin{table}
        \centering
         \caption{Exponents $p$ and $q$ in Carleman estimates}
         \label{tab:example1}
\begin{tabular}{cclcc}
   \toprule
      $m$ & $n$ &$\alpha$ & $p$  & $q$  \\
   \midrule
   \multirow{3}{*}{$m=1$} 
   & $n\ge 2$ &    1  & $\frac{2n}{n+1}$ &$\frac{2n}{n-1}$ \\  
   & $n\in\{2,3\}$  & $\mathbb{N}_{\ge 2}$  &  $\frac{6\alpha+10}{3\alpha+7}$ & $\frac{6\alpha+10}{3\alpha+3}$\\
   &  $n\ge 4$&  $\mathbb{N}_{\ge 2}$ &  $\frac{4+2(\alpha+1)(n-1)}{4+(\alpha+1)(n-1)}$  &$\frac{4+2(\alpha+1)(n-1)}{(\alpha+1)(n-1)}$ \\
     \hline
     \multirow{3}{*}{$m\ge 2$}  &$n=3$ & $\mathbb{N}_{\ge 2}$& $\frac{2\left(m+2+\frac{1}{\alpha+1}\right)}{m+2+\frac{2}{\alpha+1}}$ &$\frac{2\left(m+2+\frac{1}{\alpha+1}\right)}{m+2}$ \\
     \cline{2-5}
     & $n\in \{2,3\}$   & $1$ & \multirow{2}{*}{$\frac{2\left(n+m-2+\frac{1}{\alpha+1}\right)}{n+m-2+\frac{2}{\alpha+1}}$}  & \multirow{2}{*}{$\frac{2\left(n+m-2+\frac{1}{\alpha+1}\right)}{n+m-2}$}
    \\
    &$n\ge 4$&  $\mathbb{N}$  &  & \\
   \bottomrule
\end{tabular}
\end{table}

\subsection{The strong unique continuation  property}\label{sec6}

For $\rho>0$ and $y_{0}\in \mathbb{R}^{m}$, we denote
\begin{equation*}
    \begin{split}
      B_{\rho}:=&B_{\rho}((0, y_{0}))\\
      =&\left\{(x,y)\in \mathbb{R}^{n+m} \bigg| \left(|x|^{2(\alpha+1)}+(\alpha+1)^{2}|y-y_{0}|^{2}\right)^{\frac{1}{2(\alpha+1)}}<\rho\right\}.  
    \end{split}
\end{equation*}
Let $S^{2}(B_{\rho})$ be the closure of $C_{0}^{\infty}(B_{\rho})$ under the norm
\begin{eqnarray*}
\|u\|_{S^{2}(B_{\rho})}=\left\{\int_{B_{\rho}}\left(|\nabla_{x}^{2}u|^{2}+\left||x|^{2\alpha}\nabla_{y}^{2}u\right|^{2}
+|\nabla_{x} u|^{2}+\left||x|^{\alpha}\nabla_{y}u\right|^{2}+|u|^{2}\right)\,{\rm d}x\, {\rm d}y\right\}^{\frac{1}{2}}.
\end{eqnarray*}  
Thanks to the Sobolev inequality for the Grushin operator (see \cite[Eq.\,(1.3)]{mor}), 
we have $u\in L^{p_{0}}(B_{\rho})$ if $u\in S^{2}(B_{\rho})$,  where $p_{0}=2Q/(Q-2)$ in which $Q$ is the homogeneous dimension  in \eqref{hdm}. 

The strong unique continuation property is based on the following concept, see e.g. \cite{Ga, GS}.

\begin{definition} We say that function $u$ vanishes of infinite order at the point $(0, y_{0})$ in the $L^{p}$ mean, if for all $N>0$,
\begin{eqnarray*}
\int_{B_{\rho}((0,y_{0}))}|u|^{p}\,{\rm d}x \,{\rm d}y=\mathcal{O}(\rho^{N}), \qquad \mbox{as} \quad \rho\rightarrow 0.
\end{eqnarray*}
\end{definition}

Now, using  the Carleman estimates obtained in Section \ref{sec5} and the aforementioned Sobolev inequality, we can give the strong unique continuation  property for the Schr\"odinger operators $-\Delta_{\alpha}+V$ at points of the degeneracy manifold $\{(0, y)\in \mathbb{R}^{n+m}|y\in \mathbb{R}^{m}\}$. In particular, we determine the allowed potential space  $L_{{\rm loc}}^{r}$. Since it is obtained  by adapting the proof in \cite[Theorem 6.4]{GS} with some small changes,  we do not repeat the details of the proof here.
\begin{theorem}\label{fff1} Suppose that $u\in S^{2}(B_{\rho_{0}}((0, y_{0})))$ for some $\rho_{0}>0$ and $y_{0}\in \mathbb{R}^{m}$. Also, assume that
\begin{eqnarray}\label{ef3}
\left|\Delta_{x}u+|x|^{2\alpha}\Delta_{y}u\right|\le |Vu|    \qquad \mbox{in}\quad B_{\rho_{0}}=B_{\rho_{0}}((0, y_{0}))
\end{eqnarray}
for  potential $V \in L_{{\rm loc}}^{r}$, where $0\le 1/r<1/p-1/q$ with $p,q$ given in Theorem \ref{sxc1} and \ref{sxc2}. Then $u\equiv 0$ in $B_{\rho_{0}}$ if $u$ vanishes of infinite order  at $(0, y_{0})$ in the $L^{2}$ mean.
\end{theorem}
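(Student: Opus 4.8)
The plan is to run the by-now-standard Carleman-estimate argument of Jerison \cite{JD} and Garofalo--Shen \cite{GS}, feeding in Theorems \ref{sxc1} and \ref{sxc2} in place of their projector bounds. First I would use the invariance of $\Delta_\alpha$ under translations in the $y$-variables to reduce to $y_0=0$, so that the gauge balls $B_\rho$ are centered at the origin and the Carleman weight $\rho^{-s}$ is exactly adapted to the vanishing point. Since the inequalities \eqref{z2} are stated for $g\in C_0^\infty(\mathbb{R}^{n+m}\setminus\{0\})$ while $u$ only lies in $S^2(B_{\rho_0})$, a preliminary density/regularization step is needed to legitimize applying the estimate to $g=\zeta u$ for a Lipschitz cutoff $\zeta$ supported away from the origin; here one uses that, by \eqref{ef3}, $\Delta_\alpha u$ is controlled in $L^p_{\mathrm{loc}}$ by $|Vu|$, together with the Grushin Sobolev embedding $S^2(B_\rho)\hookrightarrow L^{p_0}$, $p_0=2Q/(Q-2)$, from \cite{mor}.

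Next, for small $0<\epsilon<r<\rho_0$ I would pick a cutoff $\zeta=\zeta_{\epsilon,r}$ equal to $1$ on $B_r\setminus B_{2\epsilon}$ and supported in $B_{2r}\setminus B_\epsilon$, and apply the Carleman estimate to $g=\zeta u$. Expanding $\Delta_\alpha(\zeta u)=\zeta\,\Delta_\alpha u+2\nabla_\alpha\zeta\cdot\nabla_\alpha u+(\Delta_\alpha\zeta)u$ splits the right-hand side into a \emph{potential term} coming from $\zeta\,\Delta_\alpha u$ and \emph{commutator terms} supported near the two spheres $\rho\approx\epsilon$ and $\rho\approx 2r$. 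For the potential term I would invoke \eqref{ef3} and factor $\rho^{-s+2}\psi^{-\varepsilon}\zeta V u=V\cdot(\rho^2\psi^{-2\varepsilon})\cdot(\rho^{-s}\psi^\varepsilon\zeta u)$, then apply a three-exponent H\"older inequality with exponents $r$, $a$, $q$ satisfying $1/r+1/a+1/q=1/p$. The hypothesis $0\le 1/r<1/p-1/q$ is precisely what makes $1/a=1/p-1/r-1/q>0$, so the middle factor $\|\rho^2\psi^{-2\varepsilon}\|_{L^a(B_{2r},\rho^{-Q})}$ is finite once $\varepsilon$ is chosen small enough that $\psi^{-2\varepsilon a}$ is integrable near the degeneracy manifold. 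Since $\|V\|_{L^r(B_{2r})}\to 0$ as $r\to0$ (and the middle factor is bounded), the entire potential term is absorbed into the left-hand side $\|\rho^{-s}\psi^\varepsilon\zeta u\|_{L^q(\rho^{-Q})}$ for $r$ fixed small, uniformly in $s$ and $\epsilon$.

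After absorption I would first let $\epsilon\to0$. The inner commutator terms carry the blowing-up weight $\rho^{-s+2}$ near $\rho\approx\epsilon$, but they are paired against $L^p$-norms of $u$ and $\nabla_\alpha u$ over the shell $\{\epsilon\le\rho\le 2\epsilon\}$; the assumption that $u$ vanishes of infinite order at the origin in the $L^2$ mean, upgraded through the Sobolev inequality and a Caccioppoli-type energy estimate to vanishing of the relevant $L^p$ and horizontal-gradient norms, kills these terms in the limit. What survives is
\begin{equation*}
\left\|\rho^{-s}\psi^\varepsilon u\right\|_{L^q(B_r,\,\rho^{-Q})}\le C\left\|\rho^{-s+2}\psi^{-\varepsilon}\big(\nabla_\alpha\zeta\cdot\nabla_\alpha u+u\,\Delta_\alpha\zeta\big)\right\|_{L^p(\{r\le\rho\le 2r\},\,\rho^{-Q})},
\end{equation*}
whose right-hand weight is comparable to $r^{-s+2}$. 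Restricting the left-hand side to $B_{r/2}$, where $\rho^{-s}\ge (r/2)^{-s}$, and dividing, the $s$-dependence of the ratio collapses to a factor of order $r^2\,2^{-s}$, which tends to $0$ as $s\to\infty$ along the admissible sequence with $\mathrm{dist}(s,\mathbb{N})=\delta$. Hence $u\equiv0$ on $B_{r/2}$, and a standard connectedness/covering argument propagates the vanishing throughout $B_{\rho_0}$.

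I expect the main obstacle to be the potential-absorption step: carrying out the H\"older bookkeeping with the two degenerate weights $\rho$ and $\psi$ precisely, verifying that the gap $1/r<1/p-1/q$ simultaneously leaves a positive exponent $1/a$ \emph{and} yields genuine smallness, while keeping $\varepsilon$ small enough that the negative power $\psi^{-2\varepsilon a}$ remains integrable across $\{x=0\}$. The second delicate point, and the place where the Grushin Sobolev inequality of \cite{mor} is essential, is upgrading the hypothesized $L^2$-mean infinite-order vanishing to control of the $L^p$ and $\nabla_\alpha$ norms needed to discard the inner commutator terms as $\epsilon\to0$.
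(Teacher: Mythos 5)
Your proposal follows exactly the route the paper intends: the paper gives no independent proof of Theorem \ref{fff1}, stating only that it is obtained by adapting \cite[Theorem 6.4]{GS} (with the Sobolev inequality of \cite{mor}), and your sketch is a faithful expansion of that standard Jerison--Garofalo--Shen argument, including the correct use of the gap $1/r<1/p-1/q$ to create the auxiliary H\"older exponent for absorption and the role of the infinite-order vanishing in discarding the inner cutoff terms. The delicate points you flag (the $\rho$-- and $\psi$--weighted bookkeeping and the Caccioppoli upgrade of the $L^2$-mean vanishing) are precisely the "small changes" the paper alludes to, so this is essentially the same approach.
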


The exponents $p, q$ in the Carleman estimates and  $r$ for the potential space are collected in Table
\ref{tab:example2}.

\section*{Conclusions}
 We have constructed an orthogonal basis of Grushin-harmonics  on $\mathbb{R}^{n+m}$ with $n,m\ge 2$ in Section \ref{31}. However, due to conditions in the polar coordinates  \eqref{pc} and \eqref{jdq}, the cases for $n=1$ and $m\ge 1$ deserve to be discussed separately.  As far as we  see, it is closely related with the so-called generalized Gegenbauer polynomials arising in the study of Dunkl operator on the line \cite{dY}. It is worth mentioning  that the $L^{2}-L^{2}$-weighted estimates for the cases where $n=2$, $m\ge 3$ odd and $\alpha\ge 2$ are not established yet. Finally, besides the potential applications in the study of unique continuation properties on $H$-type groups, our bounds in Section \ref{sec4} will  be useful to establish unique continuation results for the Schr\"odinger operators $-\Delta_{\alpha}+V$ with singular potentials, for instance the Hardy-type potentials, as studied in the Euclidean setting.

\section*{Acknowledgements}
 We thank   Walter Van Assche  and Yuan Xu  for their inspirational discussions on the weighted $L^{2}$ norm estimates of Gegenbauer polynomials. The second author was supported  by NSFC Grant No.12101451 and  China Scholarship Council.

\end{document}